\documentclass[a4paper,english,11pt]{smfart}
\usepackage[dvips]{graphicx}
\usepackage{amssymb}
\usepackage{amsmath}
\usepackage[utf8]{inputenc}
\usepackage[T1]{fontenc}
\usepackage[francais]{babel}
\usepackage{graphicx}
\usepackage{amsmath}
\usepackage{amsthm}
\usepackage{amsfonts}
\usepackage{amssymb}
\usepackage{enumerate}
\usepackage{vaucanson-g}
\usepackage{graphicx}
\usepackage{setspace}
\frenchbsetup{StandardLists=true}
\usepackage{enumitem}
\usepackage{dsfont}

\newtheorem{thm}{Theorem}[section]
\newtheorem{prop}[thm]{Proposition}
\newtheorem{lem}[thm]{Lemma}
\newtheorem{coro}[thm]{Corollary} 

\newtheorem{conj}[thm]{Conjecture}
\theoremstyle{definition}
\newtheorem{defi}[thm]{Definition}

\theoremstyle{remark} 
\newtheorem{rem}[thm]{Remark}
\theoremstyle{definition} 
\newtheorem{ex}[thm]{Example}

\newtheorem{prob}[thm]{Problem}
\newtheorem{fact}[thm]{Fact}

\newcounter{constant}
\newcommand{\newconstant}[1]{\refstepcounter{constant}\label{#1}}
\newcommand{\useconstant}[1]{c_{\ref{#1}}}
\newcommand{\defconstant}[1]{ \newconstant{c_{#1}}\expandafter\newcommand\csname c#1\endcsname{\useconstant{c_{#1}}} }  
\newcounter{Constant}
\newcommand{\newConstant}[1]{\refstepcounter{Constant}\label{#1}}
\newcommand{\useConstant}[1]{C_{\ref{#1}}}
\newcommand{\defConstant}[1]{ \newConstant{C_{#1}}\expandafter\newcommand\csname C#1\endcsname{\useConstant{C_{#1}}} }

\newcommand{\Q}{{\overline{\mathbb Q}}}
\newcommand{\V}{\mathcal V}

\newcommand{\y}{{\boldsymbol{y}}}

\newcommand{\Z}{\mathbb{Z}}

\newcommand{\N}{\mathbb{N}}

\newcommand{\M}{\mathcal M}

\newcommand{\x}{{\boldsymbol{x}}}

\newcommand{\bK}{{\rm \bf{K}}}

\newcommand{\z}{{\boldsymbol{z}}}
\newcommand{\lambd}{{\boldsymbol{\lambda}}}
\newcommand{\f}{{\boldsymbol f}}

\newcommand{\balpha}{{\boldsymbol{\alpha}}}
\newcommand{\X}{\boldsymbol{X}}

\newcommand{\bmu}{{\boldsymbol{\mu}}}

\newcommand{\bbeta}{{\boldsymbol{\beta}}}

\numberwithin{equation}{section} 

\title[Mahler's method in several variables II]{Mahler's method in several variables II: Applications to base change problems and finite automata}

\author{Boris Adamczewski}
\address{
Univ Lyon, Universit\'e Claude Bernard Lyon 1\\
 CNRS UMR 5208, Institut Camille Jordan \\
 F-69622 Villeurbanne Cedex, France}
\email{Boris.Adamczewski@math.cnrs.fr}

\author{Colin Faverjon}
\email{colin.faverjon@riseup.net}
\date{}

\thanks{This project has received funding from the European Research Council (ERC) under the European Union's Horizon 2020 
research and innovation programme under the Grant Agreement No 648132. }
\begin{document}

\begin{abstract}  
This is the second part of a work devoted to the study of linear Mahler systems in several variables from the 
perspective of transcendence and algebraic independence. 
From the lifting theorem obtained in the first part, we first derive a general result, showing that Mahler functions 
in several variables, associated with transformations having multiplicatively dependent spectral radii,  
take algebraic independent values at algebraic points  
provided that these points are sufficiently independent. 
Then, we focus on applications of this result and of the two main results of Part I of this work.  
Our main application concerns problems about the representation of natural and real numbers in 
integer bases involving automata theory. 
These can be translated in terms of algebraic relations over $\Q$ between values of Mahler functions in 
one variable. 
We also apply our results to the algebraic independence of Mahler functions and their specializations, 
and to the study of the values of Hecke-Mahler series. 
\end{abstract}

\bibliographystyle{abbvr}
\maketitle
\setcounter{tocdepth}{1}
\tableofcontents 

\section{Introduction}\label{Introduction}

This is the second part of a work devoted to the study of linear Mahler systems in several variables from the 
perspective of transcendence and algebraic independence.  
In the first part \cite{AF3}, we prove two main results concerning regular singular systems: 
the \emph{lifting theorem} \cite[Theorem 2.1]{AF3} and the \emph{purity theorem} \cite[Theorem 2.4]{AF3}.  
Let $(f_1(\z),\ldots,f_m(\z))\in \Q\{\z\}^m$ be a vector representing a solution to  
a regular singular Mahler system, and let $\balpha$ be some suitable algebraic point. The lifting theorem 
says that any homogeneous algebraic relation over $\Q$ between the complex numbers $f_1(\balpha),\ldots,f_m(\balpha)$  
can be lifted to a similar algebraic relation over $\Q(\z)$ between the functions $f_1(\z),\ldots,f_m(\z)$.  The study of the algebraic 
(resp.\ linear) relations between the values of such Mahler functions can thus be reduced to the easier study of the algebraic (resp.\ linear) 
relations between these functions. Results of this nature are a principal goal of transcendence theory.  However, we stress that \emph{easier} does not 
necessarily mean \emph{easy}, and, so far, only the linear relations between Mahler functions in one variable have been fully understood \cite{AF1,AF2}. 
The purity theorem is of different nature. It states that values of Mahler functions associated with sufficiently independent matrix 
transformations behave independently.  
Let $r\geq 2$ be an integer and, for every $i$, $1\leq i\leq r$, let $(f_{i,1}(\z),\ldots,f_{i,m_i}(\z))\in \Q\{\z\}^{m_i}$ be a vector representing a solution to  
a regular singular Mahler system associated with a matrix transformation $T_i$. Furthermore, let us assume that the spectral radii of the transformations $T_i$ 
are pairwise multiplicatively independent. Then the purity theorem 
says that the study of the algebraic relations between the values of all these functions at suitable (possibly different) algebraic points 
can be reduced to the study of each system separately. Furthermore, the latter can be done using the lifting theorem.  
We emphasize that such a miracle turns out to be a consequence of the formalism introduced by Mahler, 
which makes possible to deal with systems in several variables.  
We recall now two well-known advantages that this formalism also offers. 
 
\begin{itemize}

\item[{\rm (A)}] The first advantage of this formalism is that it allows us to deal with the algebraic relations over $\Q$ between the values of a Mahler function at different algebraic points.    
We stress that this is a natural goal of such a theory. In the setting of Siegel $E$-functions, the study of an $E$-function at different points can 
be achieved by considering different $E$-functions at the same point. 
Indeed, if $f(z)$ is an $E$-function and $\alpha$ is an algebraic number, the function $f(\alpha z)$ is still an $E$-function.  
This trick  no longer works with Mahler functions. Fortunately,    
the theory in several variables allows us to overcome this deficiency.      
Let us give a simple example. With the function $\mathfrak f(z)=\sum_{n=0}^{\infty} z^{2^n}$, we can associate  
the two variables linear system 
\begin{equation*}
\begin{pmatrix}
1 \\ \mathfrak f(z_1^{2}) \\ \mathfrak f(z_2^{2})
\end{pmatrix} =   
\begin{pmatrix} 
1 & 0 & 0\\
-z_1 & 1& 0 \\
-z_2 & 0 & 1 \\
 \end{pmatrix} \begin{pmatrix}
 1 \\ \mathfrak  f(z_1) \\  \mathfrak f(z_2)
 \end{pmatrix} \, .
\end{equation*}
The underlying transformation matrix 
\begin{equation*}
T=\begin{pmatrix}
2& 0\\
0 & 2 \\
 \end{pmatrix}\end{equation*} 
belongs to the class $\mathcal M$. Furthermore, the point $\balpha=(1/2,1/3)$ is regular with respect to this system, 
and the pair $(T,\balpha)$ is admissible. 
Now, the key point is that the transcendence of $\mathfrak f(z)$ gives for free the algebraic independence over $\overline{\mathbb Q}(z_1,z_2)$ 
of the functions $\mathfrak  f(z_1)$ and $\mathfrak f(z_2)$. By the lifting theorem, we obtain that 
$\mathfrak f(1/2)$ and $\mathfrak f(1/3)$ are algebraically independent over $\Q$.  To sum-up: 
 \emph{transcendence results in Mahler's method automatically lead to algebraic independence results}.

\medskip

\item[{\rm (B)}] The second advantage of this formalism is that it allows us to deal with the values of a larger 
class of one-variable analytic functions in $\Q\{z\}$ obtained via 
some suitable specializations of Mahler functions in several variables. Mahler's favorite example is the so-called Hecke-Mahler function  
$$
\mathfrak f_{\omega}(z)=\sum_{n=0}^{\infty}\lfloor n\omega\rfloor z^n \,,
$$ 
where $\omega$ is a quadratic irrational real number. Though the function $\mathfrak f_{\omega}(z)$ is not expected to be a Mahler function,  
we have that $\mathfrak f_{\omega}(z)= F_{\omega}(z,1)$, where 
$$
F_{\omega}(z_1,z_2)=\sum_{n_1=0}^{\infty} \sum_{n_2=0}^{\lfloor n_1\omega\rfloor}z_1^{n_1}z_2^{n_2}
$$ 
turns out to be a Mahler function in two variables.  
 In a different direction, Cobham \cite{Co68} proved that the generating functions of morphic sequences 
can always be obtained as specializations of the form $F(z,z,\ldots,z)$ of multivariate Mahler functions $F(z_1,\ldots,z_n)$. 

\end{itemize}

In the direction of (A), we first show that the lifting theorem implies another purity theorem, 
namely Theorem \ref{thm: purete2}.  In contrast to Theorem 2.4 of \cite{AF3}, it applies to 
Mahler functions associated with matrix transformations having the same spectral radius. 
The independence of the matrix transformations required in Theorem 2.4 is replaced by asking 
for some sort of independence for the different points at which the functions coming from each 
Mahler system are evaluated. 

\medskip

\begin{thm}[Purity--Independent points]\label{thm: purete2} 
We continue with the notation of Theorem 2.4 of the first part \cite{AF3}.  
Let $r\geq 2$ be an integer and $\rho>1$ be a real number. 
For every integer $i$, $1\leq i \leq r$, we consider a regular singular Mahler system   
\begin{equation}\stepcounter{equation}
\label{eq:Mahleri}\tag{\theequation .i}
\left(\begin{array}{c} f_{i,1}(T_i\z_i) \\ \vdots \\ f_{i,m_i}(T_i\z_i) \end{array} \right) = 
A_i(\z_i)\left(\begin{array}{c} f_{i,1}(\z_i) \\ \vdots \\ f_{i,m_i}(\z_i) 
\end{array} \right) 
\end{equation}
where $A_i(\z_i)$ belongs to ${\rm GL}_{m_i}(\Q(\z_i))$, $\z_i:=(z_{i,1},\ldots,z_{i,n_i})$ is a family of indeterminates, 
$T_i$ is an $n_i\times n_i$ matrix  
with non-negative  integer coefficients and with spectral radius 
$\rho$. For every $i$, $1\leq i\leq r$,  let us consider a set 
$$\mathcal E_i\subseteq \{f_{i,1}(\balpha_i),\ldots,f_{i,m_i}(\balpha_i)\}$$  
and set $\mathcal E:=\cup_{i=1}^r\mathcal E_i$. 
Suppose that 

\begin{enumerate}

\item[\rm{(i)}]  for every $i$, $\balpha_i\in(\Q^\star)^{n_i}$ is a regular point 
with respect to the system \eqref{eq:Mahleri} and the pair $(T_i,\balpha_i)$ is admissible, and 

\item[\rm{(ii)}]   the point $\balpha=(\balpha_1,\ldots,\balpha_r)$ is $T$-independent, where 
$$
T = \left(\begin{array}{ccc} T_1 & & \\ & \ddots & \\ && T_r \end{array}\right) \, . 
$$

\end{enumerate}
Then
$$
{\rm Alg}_{\Q}(\mathcal E) = \sum_{i=1}^r {\rm Alg}_{\Q}(\mathcal E_i\mid \mathcal E)\,.
$$
\end{thm}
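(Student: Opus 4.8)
The plan is to reduce Theorem \ref{thm: purete2} to the purity theorem of Part I (Theorem 2.4 of \cite{AF3}) by merging the $r$ systems into a single Mahler system in the variables $\z=(\z_1,\ldots,\z_r)$ associated with the block-diagonal transformation $T$, and then to apply the lifting theorem to this combined system. Concretely, I would first assemble the block-diagonal matrix $A(\z)=\mathrm{diag}(A_1(\z_1),\ldots,A_r(\z_r))\in\mathrm{GL}_m(\Q(\z))$ with $m=m_1+\cdots+m_r$, so that the vector obtained by stacking all the $f_{i,j}(\z_i)$ solves a regular singular Mahler system for $T$. The point $\balpha=(\balpha_1,\ldots,\balpha_r)$ lies in $(\Q^\star)^n$ with $n=n_1+\cdots+n_r$; I would check that $\balpha$ is a regular point for the combined system and that $(T,\balpha)$ is admissible — both should follow from the corresponding hypotheses (i) on each block, since regularity and admissibility are conditions that decouple along the block structure of $T$ (the iterates $T^k\balpha$ are exactly $(T_1^k\balpha_1,\ldots,T_r^k\balpha_r)$, and the spectral radius of $T$ is $\rho$ because all blocks share that spectral radius).

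The crux is then hypothesis (ii): the point $\balpha$ is $T$-independent. This is precisely the hypothesis under which the lifting theorem applies, and more importantly it is what makes the combined system ``as independent as possible.'' I would invoke the lifting theorem to conclude that $\mathrm{Alg}_{\Q}(\mathcal E)$, the ideal of algebraic relations over $\Q$ among the values in $\mathcal E=\cup_i\mathcal E_i$, is generated by (the specialization of) the ideal of algebraic relations over $\Q(\z)$ among the corresponding functions. So the statement reduces to a purely function-theoretic claim: the ideal of relations over $\Q(\z)$ among the functions $\{f_{i,j}(\z_i)\}$ decomposes as the sum of the ideals of relations coming from each block separately. This is where I expect to use the purity mechanism — but here the blocks all have the \emph{same} spectral radius $\rho$, so Theorem 2.4 of \cite{AF3} does not apply directly; instead the needed independence of the function blocks must come from the $T$-independence of $\balpha$ translated back to a statement about the transcendence-theoretic independence of the variable groups $\z_i$ under the dynamics of $T$.

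\textbf{Main obstacle.} The delicate point is establishing that $T$-independence of $\balpha$ forces the functions coming from distinct systems to be ``algebraically independent relative to each block'' over $\Q(\z)$ — i.e. that the only algebraic relations over $\Q(\z)$ among all the $f_{i,j}(\z_i)$ are those generated within individual blocks. The difficulty is that the different function families live in disjoint sets of variables but are tied together through the single transformation $T$ and the single evaluation point $\balpha$; one cannot simply argue by a direct elimination of variables. I anticipate that the right tool is the structure of the Kolchin-type closure / the difference-Galois formalism attached to $T$, combined with a careful analysis of how $T$-independence prevents any nontrivial ``cross'' relation from being compatible with the functional equations: any such relation, pulled along the orbit $T^k\balpha$, would violate the growth or valuation estimates that $T$-independence guarantees. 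Making this rigorous — in particular handling the case where the $T_i$ are non-diagonalizable or share eigenvalues beyond the common spectral radius $\rho$ — is the step that will require the most care, and it is essentially the content that upgrades the one-variable linear theory of \cite{AF1,AF2} to the several-variable algebraic setting here.
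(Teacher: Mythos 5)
Your opening moves match the paper's — assemble the block-diagonal system for $T$ and apply the lifting theorem at $\balpha=(\balpha_1,\ldots,\balpha_r)$ — but you misdiagnose the ``crux,'' and the route you sketch for it points in the wrong direction. The function-level decomposition you need (every algebraic relation over $\Q(\z)$ among all the $f_{i,j}(\z_i)$ splits as a sum of relations each involving only one block's functions) has nothing to do with the dynamics of $T$, with the $T$-independence of $\balpha$, or with any difference-Galois, Kolchin-closure, or growth-estimate machinery. It is an elementary consequence of the fact that the variable families $\z_1,\ldots,\z_r$ are pairwise disjoint; this is exactly Lemma 8.2 of \cite{AF3}, and it is precisely the direct argument you rule out when you write that ``one cannot simply argue by a direct elimination of variables.'' In the paper, hypothesis (ii) is consumed entirely by the lifting step (lifting a relation among the \emph{values} to a relation among the \emph{functions}); the subsequent splitting of the lifted relation into blocks is pure commutative algebra on disjoint variable groups and does not depend on $\balpha$ at all, so an argument that pulls a putative cross-relation along the orbit $T^k\balpha$ is aimed at the wrong target.

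There is a second, independent gap: you implicitly treat $\mathcal E$ as the full set of coordinates $\{f_{i,j}(\balpha_i)\}$, whereas the theorem allows arbitrary subsets $\mathcal E_i$, and the lift-then-split argument only directly yields the statement for the full sets $\mathcal F_i=\{f_{i,1}(\balpha_i),\ldots,f_{i,m_i}(\balpha_i)\}$ (the block components $P_i$ of a lifted relation may involve variables attached to values outside $\mathcal E$). The paper's proof therefore has a genuinely separate descent step: from the full-set case (Lemma \ref{lem:puritysystementiers}) one deduces additivity of transcendence degrees, passes to the subsets via Lemma \ref{lem:transcendancedegree}, and then concludes by an ideal-theoretic argument — $\sum_{i}{\rm Alg}_{\Q}(\mathcal E_i\mid\mathcal E)$ is prime because a tensor product of integral domains over the algebraically closed field $\Q$ is an integral domain, and it has the same height as ${\rm Alg}_{\Q}(\mathcal E)$ by Krull's height theorem together with the relation ${\rm ht}({\rm Alg}_{\Q}(\mathcal E))=\#\mathcal E-{\rm tr.deg}_{\Q}(\mathcal E)$, so the trivial inclusion forces equality. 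None of this descent appears in your proposal, and without it the theorem as stated is not proved.
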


\begin{rem}
If the matrices $T_1,\ldots,T_r$ have different, but pairwise multiplicatively dependent, spectral radii $\rho_i$, 
we could pick positive integers $d_1,\ldots,d_r$ such that 
$$
\rho_1^{d_1}=\cdots=\rho_r^{d_r}\, , 
$$
and iterate each system \eqref{eq:Mahleri} $d_i$ times, in order to apply Theorem \ref{thm: purete2} 
with the matrices $T_i^{d_i}$.
\end{rem}

In the sequel, we refer to Theorem 2.4 of \cite{AF3} as the \emph{first purity theorem} and to 
Theorem \ref{thm: purete2} as the \emph{second purity theorem}. We thus have at our 
disposal three main theorems from which we derive our different applications. 
We emphasize that although these three results concern Mahler systems in several variables, 
we mainly focus here on applications concerning analytic functions of a single variable, according to (A) and (B).  
As an illustration of possible applications in the direction of (B), we obtain for instance the following result about the values 
of Hecke--Mahler functions, extending theorems of Ku. Nishioka \cite{Ni94} 
and Masser \cite{Mas99}. 

\begin{thm}\label{th:HeckeMahler}
Let $\omega_{1},\ldots,\omega_{r}$ be distinct quadratic irrational real numbers 
 such that the quadratic fields $\mathbb Q(\omega_1),\ldots,\mathbb Q(\omega_r)$ are all distinct. 
For every $i$, $1\leq i\leq r$, let $\alpha_{i,1},\ldots,\alpha_{i,m_i}$ 
be distinct algebraic numbers with $0<\vert \alpha_{i,j}\vert < 1$. 
Then the numbers 
$$
f_{\omega_{i}}(\alpha_{i,j}),\, 1\leq i \leq r,\, 1 \leq j \leq m_i\, ,
$$ 
are algebraically independent over $\Q$.
\end{thm}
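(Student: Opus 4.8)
The plan is to realize each Hecke--Mahler function $f_{\omega_i}$ as a specialization of a two-variable Mahler function and then to apply the second purity theorem together with the lifting theorem. Concretely, for each $i$ we write $f_{\omega_i}(z) = F_{\omega_i}(z,1)$ with $F_{\omega_i}(z_1,z_2) = \sum_{n_1\geq 0}\sum_{n_2=0}^{\lfloor n_1\omega_i\rfloor} z_1^{n_1} z_2^{n_2}$, which is a Mahler function in two variables; the relevant transformation matrix is governed by the continued fraction expansion of $\omega_i$, and since $\omega_i$ is a quadratic irrational this expansion is eventually periodic, so there is an integer matrix $T_i\in\mathcal M$ with a well-defined spectral radius $\rho_i>1$ (the dominant eigenvalue attached to the period). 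One then replaces each system by an iterate $T_i^{d_i}$ (as in the Remark following Theorem~\ref{thm: purete2}) so that all the spectral radii become equal to a common $\rho$; this is harmless since iterating a Mahler system does not change the field generated by the values. The first task is therefore to set up, for each $i$, a regular singular Mahler system in two variables whose solution vector contains $f_{\omega_i}(z_1 z_2^{\lfloor\,\cdot\,\rfloor})$-type entries and, after specialization $z_2=1$, recovers the numbers $f_{\omega_i}(\alpha_{i,j})$; here the points are $\balpha_{i} = (\alpha_{i,j}, 1)$ or a suitable tuple thereof, and one must check that these are regular points and that the pairs $(T_i,\balpha_i)$ are admissible.

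Granting that set-up, the proof splits into two independent halves. First, one must verify hypothesis (ii) of Theorem~\ref{thm: purete2}, namely that the global point $\balpha = (\balpha_1,\ldots,\balpha_r)$ is $T$-independent for the block-diagonal matrix $T=\mathrm{diag}(T_1^{d_1},\ldots,T_r^{d_r})$. This is exactly where the hypothesis that the quadratic fields $\mathbb Q(\omega_1),\ldots,\mathbb Q(\omega_r)$ are pairwise distinct must be used: the $T$-independence of $\balpha$ unwinds to a statement that certain multiplicative relations among the coordinates $\alpha_{i,j}$ twisted by the iterates of the $T_i$ cannot occur, and the obstruction to such relations is precisely that the dominant eigenvalues / eigenvectors attached to the distinct periodic continued fractions live in distinct quadratic fields and so are $\mathbb{Q}$-linearly (indeed multiplicatively) unrelated. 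Second, by the second purity theorem it remains to analyze each system $i$ separately, i.e.\ to compute $\mathrm{Alg}_{\Q}(\mathcal E_i)$ where $\mathcal E_i = \{f_{\omega_i}(\alpha_{i,1}),\ldots,f_{\omega_i}(\alpha_{i,m_i})\}$. By the lifting theorem this reduces to the study of the algebraic relations over $\overline{\mathbb Q}(z_1,z_2)$ between the functions $F_{\omega_i}(z_1,z_2)$ specialized along the relevant orbit, which in turn, by transcendence-to-algebraic-independence (advantage (A)), reduces to showing that $F_{\omega_i}(z_1,z_2)$ is transcendental over $\overline{\mathbb Q}(z_1,z_2)$ and that the $m_i$ evaluations at the distinct $\alpha_{i,j}$ remain algebraically independent as functions. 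This last point is the known one-field case, essentially the theorems of Ku.~Nishioka and Masser cited in the statement, and can be quoted or re-derived from the lifting theorem applied to a single Hecke--Mahler system; the key input is that the $\alpha_{i,j}$ are distinct and of modulus less than $1$, together with the fact that $F_{\omega_i}$ and its ``partial derivatives'' along the Mahler orbit are algebraically independent over the rational function field.

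Putting the two halves together, Theorem~\ref{thm: purete2} gives
$$
\mathrm{Alg}_{\Q}(\mathcal E) = \sum_{i=1}^r \mathrm{Alg}_{\Q}(\mathcal E_i\mid \mathcal E),
$$
and since each $\mathrm{Alg}_{\Q}(\mathcal E_i)$ is trivial (no nontrivial algebraic relations), the right-hand side is trivial, which is exactly the assertion that the full family $\{f_{\omega_i}(\alpha_{i,j})\}$ is algebraically independent over $\Q$. I expect the main obstacle to be the verification of $T$-independence in step (ii): one has to translate the abstract $T$-independence condition (from Part~I) into a concrete statement about the quadratic numbers $\omega_i$ and the points $\alpha_{i,j}$, and then exploit the distinctness of the fields $\mathbb Q(\omega_i)$ to rule out all the forbidden multiplicative/linear relations among the transformed coordinates. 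The construction of the correct two-variable Mahler systems and the bookkeeping needed to go from $F_{\omega_i}(z_1,z_2)$ to the right solution vector (so that specializing $z_2=1$ produces exactly the $f_{\omega_i}(\alpha_{i,j})$ and nothing spurious) is a secondary but nontrivial technical point, as is checking admissibility of each $(T_i,\balpha_i)$.
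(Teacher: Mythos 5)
Your overall architecture is inverted, and the pivotal step fails. You propose to iterate each system so that the spectral radii $\rho(T_i)^{d_i}$ all become equal and then to apply the second purity theorem (Theorem \ref{thm: purete2}), extracting the required $T$-independence of the combined point from the hypothesis that the fields $\mathbb Q(\omega_i)$ are distinct. But the spectral radius of the transformation attached to $\omega_i$ is a unit ($>1$) of the ring of integers of $\mathbb Q(\omega_i)$, and nontrivial units of distinct real quadratic fields are multiplicatively \emph{independent} (each is, up to sign, a power of the fundamental unit of its own field). Hence no positive integers $d_i$ can make the spectral radii coincide: the equalization step you rely on is impossible, and the second purity theorem --- whose hypothesis is precisely that all the $T_i$ share a common spectral radius $\rho$ --- cannot be applied across distinct fields. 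Moreover, $T$-independence is a condition on the evaluation point, not on the matrices: since the statement only requires the $\alpha_{i,j}$ to be distinct \emph{within} each field, one may have $\alpha_{i,j}=\alpha_{i',j'}$ for $i\neq i'$, so the combined point can have repeated (hence multiplicatively dependent) coordinates, and no hypothesis on the fields $\mathbb Q(\omega_i)$ could rescue its $T$-independence.

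The paper's proof turns exactly this obstruction into the main tool: the pairwise multiplicative independence of the $\rho(T_i)$ is the hypothesis of the \emph{first} purity theorem (Theorem 2.4 of \cite{AF3}), and that is what separates the fields. The remaining difficulty --- several distinct points $\alpha_{i,1},\ldots,\alpha_{i,m_i}$ inside one field, attached to a single matrix $T_i$, a situation covered by neither purity theorem as such --- is resolved by Lemma \ref{lem:HMindependentpoints}: writing $\alpha_{i,j}$ (after extracting $q$-th roots) as a root of unity times a monomial in multiplicatively independent numbers $\beta_{i,1},\ldots,\beta_{i,s_i}$, all $m_i$ values are realized as values of one regular singular Mahler system at one admissible regular point $\bbeta_i$, and Masser's Theorem \ref{th:Masser1} then gives the algebraic independence within each block. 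Your per-field reduction to Masser/Nishioka is in the right spirit, but without this packaging step (or some substitute for it) the first purity theorem cannot even be invoked, since it handles one evaluation point per system.
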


\begin{rem}\label{rem:contre-exemple_Hecke-Mahler}
This result is almost the best possible, in the sense that if $\omega_1$ and $\omega_2$ belong to 
the same quadratic number field, then there may be some relations between the values of 
$f_{\omega_1}(z)$ and $f_{\omega_2}(z)$. For instance, if $\omega$ is 
a positive real number, then $f_{\omega}(z)+f_{\omega}(-z)-2f_{2\omega}(z^2) = 0$. 
Hence, $f_{\omega}(1/2)$, $f_{\omega}(-1/2)$, and $f_{2\omega}(1/4)$ are linearly 
dependent over $\Q$. 
\end{rem}

Beyond Hecke--Mahler series, our main application concerns Mahler functions in one variable. 
We recall that, given an integer $q\geq 2$, $f(z)\in \overline{\mathbb Q}\{z\}$ is a {\it $q$-Mahler function} 
if there exist polynomials $p_0(z),\ldots , p_n(z)\in \overline{\mathbb Q}[z]$, not all zero,  
such that  
\begin{equation} \label{eq:Mahler1}
 p_0(z)f(z)+p_1(z)f(z^q)+\cdots + p_n(z)f(z^{q^n}) \ = \ 0. 
 \end{equation}  
If $f(z)$ is $q$-Mahler for some $q$, we simply say that $f(z)$ is a Mahler function. 
In Section \ref{sec: basechange}, we describe several problems, namely Problems  \ref{conj: strongf}, \ref{conj: weakv}, and \ref{conj: strongv},  
concerning expansions of natural and real numbers in integer bases.  
These problems all involve the so-called \emph{automatic sequences}, and 
all are also widely open.  They take their roots in the works of Cobham \cite{Co68,Co69} in the late sixties, and of Loxton and van der Poorten 
\cite{LvdP770,LvdP78,LvdP82,vdP76,vdP87} in the late seventies and in the eighties. 
As recalled in Section \ref{sec: basechange}, 
the generating function associated with a $q$-automatic sequence is a $q$-Mahler function, so that,  
in the end, a solution to all these problems would follow from the following general conjecture.  
We recall that given complex numbers $\alpha_1,\ldots,\alpha_r$ are said to be multiplicative independent if there is no non-zero 
tuple of integers $n_1,\ldots,n_r$ such that 
$\alpha_1^{n_1}\cdots\alpha_r^{n_r}=1$. 

\begin{conj}\label{conj: strongvm}
Let $r\geq 2$ be an integer. For every integer $i$, $1\leq  i\leq r$, we let $q_i\geq 2$ be an integer, 
$f_i(z)\in \Q\{z\}$ be a $q_i$-Mahler function, 
and $\alpha_i$ be an algebraic number, $0 <\vert\alpha_i\vert <1$, such that $f_i(z)$ is well-defined at $\alpha_i$.  Then the following 
properties hold. 

\begin{itemize}

\item[{\rm (i)}] Let us assume that $\alpha_1,\ldots, \alpha_r$ are multiplicatively independent. Then 
the numbers $f_1(\alpha_1),f_2(\alpha_2),\ldots,f_r(\alpha_r)$ are algebraically independent over $\Q$ if and only if 
they are all transcendental.  

\medskip

\item[{\rm (ii)}] Let us assume that $q_1,\ldots, q_r$ are pairwise multiplicatively independent. Then 
the numbers $f_1(\alpha_1),f_2(\alpha_2),\ldots,f_r(\alpha_r)$ are algebraically independent over $\Q$ if and only if 
they are all transcendental.  
\end{itemize}
\end{conj}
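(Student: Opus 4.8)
The stated equivalence has an immediate direction: a finite family of complex numbers that is algebraically independent over $\Q$ cannot contain an algebraic number, so algebraic independence of the $f_i(\alpha_i)$ forces all of them to be transcendental. The whole content is therefore the converse, and the plan is to obtain it by feeding the family into the three theorems at our disposal (the lifting theorem and the two purity theorems of \cite{AF3}), after a reduction to one--dimensional Mahler systems.

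\emph{Reduction step.} Since $f_i$ is $q_i$--Mahler, the $\Q(z)$--vector space spanned by $\{f_i(z^{q_i^j}) : j\ge 0\}$ is finite dimensional; hence the functions $f_i(z), f_i(z^{q_i^k}), f_i(z^{q_i^{2k}}),\dots$ are again $\Q(z)$--linearly dependent, so $f_i$ is also $q_i^k$--Mahler for every $k\ge 1$. I would then group the indices according to the multiplicative dependence classes of $q_1,\dots,q_r$: inside such a class all bases are powers of one primitive integer $q_0$, and, raising bases to suitable powers, one may assume that all the $f_i$ in that class satisfy a Mahler equation for a single base $Q_i$ (the same for all $i$ in the class), whereas bases attached to distinct classes stay pairwise multiplicatively independent. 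Turning each Mahler equation, together with the symmetric powers $f_i,f_i^2,\dots$ that serve to homogenize polynomial relations, into a Mahler system with $1\times 1$ transformation $T_i=(Q_i)$, one checks that $\alpha_i$ remains a regular point and that $(T_i,\alpha_i)$ remains admissible under these iterations.

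Then the machine runs. \emph{(a)} The blocks attached to distinct multiplicative dependence classes have pairwise multiplicatively independent spectral radii, so the first purity theorem (Theorem~2.4 of \cite{AF3}) reduces the computation of ${\rm Alg}_{\Q}$ of the whole family to that of each class separately. \emph{(b)} Within a class all transformations equal the scalar $Q_i$, so the spectral radii coincide and the first purity theorem is of no help; instead the second purity theorem (Theorem~\ref{thm: purete2}) applies, provided the tuple of the $\alpha_i$ in that class is $T$--independent, which for scalar blocks should amount to the multiplicative independence (up to roots of unity) of those $\alpha_i$ -- and this is guaranteed by hypothesis~(i) in part~(i), while in part~(ii) every class is a singleton and step~(b) is vacuous. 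This reduces the computation to each single pair $(f_i,\alpha_i)$. \emph{(c)} For a single system, the lifting theorem applied to the symmetric powers shows that every polynomial relation over $\Q$ among the $f_i(\alpha_i)^j$ lifts to one over $\Q(z)$ among the $f_i(z)^j$; combined with specialization at the regular point $\alpha_i$ this gives that $f_i(\alpha_i)$ is transcendental over $\Q$ if and only if $f_i(z)$ is transcendental over $\Q(z)$. Assuming now that all the $f_i(\alpha_i)$ are transcendental, (c) makes every $f_i(z)$ transcendental over $\Q(z)$, so ${\rm Alg}_{\Q}(\{f_i(\alpha_i)\})=(0)$; propagating this back through (a) and (b) yields ${\rm Alg}_{\Q}$ of the whole family $=(0)$, that is, algebraic independence over $\Q$. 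In part~(ii) only steps (a) and (c) are needed.

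\emph{The main obstacle.} I expect this scheme to be complete and unconditional but for one point: the lifting theorem and the two purity theorems of \cite{AF3} are proved only for \emph{regular singular} Mahler systems, whereas a general $q$--Mahler function need not satisfy a regular singular Mahler equation, and it is not known whether it can always be captured by a regular singular system. That gap is precisely what should keep Conjecture~\ref{conj: strongvm} conjectural. What the argument above does establish unconditionally is the variant obtained by assuming in addition that the $f_i$ arise from regular singular systems, which in turn suffices to settle the corresponding special cases of the arithmetic problems of Section~\ref{sec: basechange} whenever that assumption, or a workable substitute, is available.
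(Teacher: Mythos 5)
The statement you were asked about is a \emph{conjecture}: the paper does not prove it, and it remains open. You correctly identify both the trivial direction and the reason the full statement stays conjectural, namely that the lifting and purity theorems of \cite{AF3} apply only to regular singular systems. What your argument actually targets is the paper's Theorem \ref{thm:main} (the regular singular case), and there your architecture coincides with the paper's: group the indices by multiplicative dependence classes of the $q_i$, iterate so that bases within a class coincide, apply the first purity theorem across classes, the second purity theorem within a class (where $T$-independence of a tuple of points for a scalar block $q\,{\rm I}_t$ is exactly multiplicative independence of those points), and conclude from the transcendence of each individual value that each ${\rm Alg}_{\Q}(\mathcal E_i\mid\mathcal E)$ vanishes. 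Your step (c) is not needed: once $f_i(\alpha_i)$ is assumed transcendental, ${\rm Alg}_{\Q}(\{f_i(\alpha_i)\})=(0)$ immediately, with no lifting or symmetric powers required.

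There is, however, one genuine gap in your sketch even for the regular singular case. You write that ``one checks that $\alpha_i$ remains a regular point and that $(T_i,\alpha_i)$ remains admissible under these iterations,'' but the hypothesis of the conjecture only guarantees that $f_i(z)$ is \emph{well-defined} at $\alpha_i$; it does not make $\alpha_i$ a regular point of any system containing $f_i$, and in general it is not one (the matrix $A_i(z)$ may be singular or have a pole somewhere along the orbit $\alpha_i^{q_i^k}$). The paper spends the entire first half of the proof of Theorem \ref{thm:main} repairing this: it extracts a maximal $\Q(z)$-linearly independent subfamily, shows the reduced system is still regular singular, invokes \cite[Theorem 1.10]{AF1} to find an iterate $l$ for which $\alpha_i^{q_i^l}$ is regular, and then replaces $f_i$ by an auxiliary function $g_i(z)=a_1(\alpha_i)f_1(z^{q_i^l})+\cdots+a_r(\alpha_i)f_r(z^{q_i^l})$ satisfying $g_i(\alpha_i)=f_i(\alpha_i)$ and sitting in a regular singular system for which $\alpha_i$ \emph{is} regular. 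Without this construction (or an equivalent substitute) the purity theorems cannot be invoked at the given points, so your reduction step needs to be supplemented before the machine can run.
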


Our main contribution towards Conjecture  \ref{conj: strongvm} is the following.

\begin{defi}\label{defi:regular_singular_une_variable}
A $q$-Mahler function is  \emph{regular singular}  if it is 
the coordinate of a vector representing a solution to a regular singular Mahler system. 
\end{defi}

\begin{thm}\label{thm:main}
Conjecture \ref{conj: strongvm} is true if each $f_i(z)$ is regular singular.  
\end{thm}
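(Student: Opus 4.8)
The plan is to deduce Theorem \ref{thm:main} from the three main results at our disposal: the lifting theorem and purity theorem of \cite{AF3}, together with the second purity theorem (Theorem \ref{thm: purete2}). Fix the data of Conjecture \ref{conj: strongvm}: integers $q_i\geq 2$, regular singular $q_i$-Mahler functions $f_i(z)$, and algebraic points $\alpha_i$ with $0<|\alpha_i|<1$ at which $f_i$ is defined. By Definition \ref{defi:regular_singular_une_variable}, each $f_i(z)$ is a coordinate of a solution vector $(f_{i,1}(z),\ldots,f_{i,m_i}(z))$ of a regular singular $q_i$-Mahler system, i.e.\ a one-variable system with transformation matrix the $1\times 1$ matrix $(q_i)$, of spectral radius $q_i$. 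After replacing $\alpha_i$ by a suitable point and possibly iterating the system (a $q_i$-system iterated $d$ times is a $q_i^d$-system), one may arrange that $\alpha_i$ is a regular point and $(q_i,\alpha_i)$ is admissible, in the sense required by Theorem \ref{thm: purete2}; this is a routine normalization step that I expect to follow the arguments already developed in \cite{AF3} for passing to an admissible pair. In both cases (i) and (ii) the ``only if'' direction is trivial, since an algebraically independent family is in particular a transcendental one, so the whole content is the converse.

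For part (ii), assume $q_1,\ldots,q_r$ are pairwise multiplicatively independent and each $f_i(\alpha_i)$ is transcendental. The transformations here are the scalars $T_i=(q_i)$, which have pairwise multiplicatively independent spectral radii, so the \emph{first} purity theorem (Theorem 2.4 of \cite{AF3}) applies directly: the study of the algebraic relations over $\Q$ among all the $f_{i,j}(\alpha_i)$ reduces to the study of each system $i$ separately. For each fixed $i$, the lifting theorem reduces the algebraic relations among $f_{i,1}(\alpha_i),\ldots,f_{i,m_i}(\alpha_i)$ to the algebraic relations over $\Q(z)$ among the functions $f_{i,1}(z),\ldots,f_{i,m_i}(z)$. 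Now the single number $f_i(\alpha_i)$ is algebraic over $\Q$ precisely when, after lifting, $f_i(z)$ would be forced to be algebraic over $\Q(z)$ — but one checks that for a regular singular Mahler function $f_i(z)$, algebraicity over $\Q(z)$ would make $f_i(\alpha_i)$ algebraic, contradicting transcendence; hence $f_i(z)$ is transcendental over $\Q(z)$, and combining the purity decomposition with the per-system lifting gives that $f_1(\alpha_1),\ldots,f_r(\alpha_r)$ satisfy no nontrivial algebraic relation over $\Q$, which is the desired conclusion.

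For part (i), assume instead that $\alpha_1,\ldots,\alpha_r$ are multiplicatively independent and each $f_i(\alpha_i)$ is transcendental. Here the $q_i$ may coincide, so the first purity theorem is unavailable and one must use the \emph{second} purity theorem instead. After the iteration trick in the Remark following Theorem \ref{thm: purete2}, we may assume all the spectral radii are equal to a common $\rho$ (replace $q_i$ by $q_i^{d_i}$ with $\prod$-type exponents chosen so the powers agree; note iterating does not change the value $f_i(\alpha_i)$ nor its transcendence). The hypothesis of Theorem \ref{thm: purete2} that the point $\balpha=(\alpha_1,\ldots,\alpha_r)$ be $T$-independent, with $T=\mathrm{diag}(q_1^{d_1},\ldots)$, must be extracted from the multiplicative independence of $\alpha_1,\ldots,\alpha_r$: this is where the hypotheses genuinely get used and is, I expect, the main technical obstacle — one needs to unwind the precise definition of $T$-independence from \cite{AF3} and verify that, for scalar diagonal $T$, it is equivalent to (or implied by) multiplicative independence of the coordinates together with the admissibility already arranged. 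Granting this, Theorem \ref{thm: purete2} yields $\mathrm{Alg}_\Q(\mathcal E)=\sum_i \mathrm{Alg}_\Q(\mathcal E_i\mid\mathcal E)$ with $\mathcal E_i=\{f_i(\alpha_i)\}$, reducing again to each system separately; per-system the lifting theorem together with transcendence of $f_i(\alpha_i)$ (equivalently, transcendence of $f_i(z)$ over $\Q(z)$, as above) shows $f_i(\alpha_i)$ is not in the algebraic closure of the others, completing the proof.

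In summary, the skeleton is: (1) normalize to admissible pairs, iterating systems as needed; (2) dispose of the trivial direction; (3) in case (ii) invoke the first purity theorem, in case (i) invoke the second, to split into individual systems; (4) in each individual system apply the lifting theorem to pass from values to functions, and use that a regular singular Mahler function with a transcendental value is transcendental over $\Q(z)$. The delicate points are the verification that multiplicative independence of the $\alpha_i$ implies the $T$-independence needed in step (3) of case (i), and the bookkeeping ensuring the iteration in the Remark is compatible with admissibility.
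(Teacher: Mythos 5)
Your outline has the right architecture (reduce to singleton value sets, split with the purity theorems, use transcendence of each $f_i(\alpha_i)$), but there are two genuine gaps, one of which is precisely the point your sketch defers as ``routine.''

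First, the normalization step is the main technical content of the paper's proof and cannot be dismissed. You cannot ``replace $\alpha_i$ by a suitable point'': the statement concerns the value at the \emph{given} $\alpha_i$, and iterating the system does not by itself make $\alpha_i$ a regular point. What the paper actually does is: extract from the given system a subsystem whose coordinate functions are linearly independent over $\Q(z)$ and check (via a block-triangular gauge computation) that this subsystem is still regular singular; invoke Theorem 1.10 of \cite{AF1} to produce an iterate $l$ for which $\alpha_i^{q_i^{l}}$ is regular for the $l$-fold iterated system; and then define a \emph{new} function $g_i(z)=a_{1}(\alpha_i)f_{i,1}(z^{q_i^{l}})+\cdots+a_{r}(\alpha_i)f_{i,r}(z^{q_i^{l}})$, where $(a_1(z),\ldots,a_r(z))$ is the first row of the iterated matrix, so that $g_i(\alpha_i)=f_i(\alpha_i)$ while $\alpha_i$ is regular with respect to the regular singular system satisfied by $g_i$. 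None of this is supplied by your sketch, and no argument ``already developed in \cite{AF3}'' performs this transfer for you.

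Second, in part (i) your reduction ``we may assume all the spectral radii are equal to a common $\rho$'' is false. Part (i) places no constraint on the $q_i$: if $q_1=2$ and $q_2=3$ there are no positive integers $d_1,d_2$ with $q_1^{d_1}=q_2^{d_2}$. The correct argument partitions $\{1,\ldots,r\}$ into classes according to multiplicative dependence of the $q_i$, applies the \emph{first} purity theorem across classes (where the spectral radii are pairwise multiplicatively independent), and only then applies the second purity theorem (Theorem \ref{thm: purete2}) within each class, where iteration genuinely equalizes the $q_i$. So part (i) requires both purity theorems, not just the second. A minor further point: once the decomposition into singletons $\mathcal E_i=\{g_i(\alpha_i)\}$ is achieved, the conclusion follows immediately because ${\rm Alg}_{\Q}(\mathcal E_i\mid\mathcal E)=\{0\}$ for a singleton consisting of a transcendental number; your additional per-system appeal to the lifting theorem, and the claim that it converts algebraicity of a value into algebraicity of the function, is both unnecessary and not what the lifting theorem asserts.
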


\begin{rem}
In fact, a $q$-Mahler function $f(z)$ is regular singular if and only if the Mahler system associated 
with the companion matrix of Equation \eqref{eq:Mahler1} is regular singular (see Remark \ref{rem:ers}).  
We stress that if $p_0(0)p_n(0)\not=0$ in Equation \eqref{eq:Mahler1}, 
the corresponding $q$-Mahler function $f(z)$ is regular singular. 
Thus, being regular singular is a generic property for $q$-Mahler functions. 
In a previous paper \cite{AF1}, the authors provide an algorithm to determine whether or not the numbers $f_i(\alpha_i)$ 
occurring in Conjecture \ref{conj: strongvm} are transcendental. 
Furthermore, Richardson \cite[Theorem 2]{Ri01} provides an algorithm to determine  whether or not the algebraic numbers $\alpha_1,\ldots, \alpha_r$ 
are multiplicatively independent.  
The algebraic independence of the numbers $f_1(\alpha_1),\ldots,f_r(\alpha_r)$ in Theorem \ref{thm:main} 
can thus be determined effectively.  
\end{rem}

We mention the following consequence of Theorem \ref{thm:main} related to Problems 
\ref{conj: strongf} and \ref{conj: strongf2}. 

\begin{coro}
\label{coro:function}
Let $q_1,q_2,\ldots,q_r$ be pairwise multiplicatively independent positive integers. For every $i$, $1\leq i\leq r$, let
$f_i(z)\in \Q\{z\}$ be a regular singular $q_i$-Mahler function that is not a rational function.    
Then $f_1(z),f_2(z),\ldots,f_r(z)$ are algebraically independent over $\Q(z)$. 
\end{coro}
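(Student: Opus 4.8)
The plan is to deduce Corollary \ref{coro:function} from Theorem \ref{thm:main} by contraposition, translating a hypothetical algebraic dependence of the functions $f_i(z)$ over $\Q(z)$ into an algebraic dependence of well-chosen values $f_i(\alpha_i)$ that contradicts part (ii) of Conjecture \ref{conj: strongvm} for regular singular functions. First I would observe that each $f_i(z)$, being a regular singular $q_i$-Mahler function that is not rational, is in particular transcendental over $\Q(z)$; hence by the one-variable Mahler transcendence theory (e.g.\ the results of \cite{AF1} together with the lifting theorem), for suitable algebraic $\alpha_i$ with $0<|\alpha_i|<1$ the value $f_i(\alpha_i)$ is transcendental over $\Q$. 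The point is that there are infinitely many admissible $\alpha_i$, so I can choose them to satisfy an additional genericity condition.

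Next I would argue by contradiction: suppose $f_1(z),\ldots,f_r(z)$ are algebraically \emph{dependent} over $\Q(z)$. Then, by clearing denominators, there is a nonzero polynomial $P\in\Q(z)[X_1,\ldots,X_r]$, which I may take with coefficients in $\Q[z]$, such that $P(z;f_1(z),\ldots,f_r(z))=0$ identically. Now I would specialize $z\mapsto\alpha$ for algebraic $\alpha$ with $0<|\alpha|<1$ chosen in the common domain of analyticity of all the $f_i$, avoiding the finitely many zeros of the (nonzero) leading data of $P$ so that $P(\alpha;X_1,\ldots,X_r)$ is still a nonzero polynomial over $\Q$; this yields an algebraic relation $P(\alpha;f_1(\alpha),\ldots,f_r(\alpha))=0$ over $\Q$. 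Setting $q_i'$ equal to the $q_i$ and $\alpha_i:=\alpha$ for all $i$ would be tempting, but then the bases $q_i$ are pairwise multiplicatively independent by hypothesis, so part (ii) of Conjecture \ref{conj: strongvm}, which holds by Theorem \ref{thm:main} since each $f_i$ is regular singular, forces that $f_1(\alpha),\ldots,f_r(\alpha)$ are \emph{not} all transcendental over $\Q$ — i.e.\ at least one $f_i(\alpha)$ is algebraic. So the real task is to choose a single $\alpha$ at which all the $f_i(\alpha)$ are simultaneously transcendental; then the relation $P(\alpha;f_1(\alpha),\ldots,f_r(\alpha))=0$ contradicts part (ii), giving the result.

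The main obstacle, therefore, is producing one admissible algebraic point $\alpha$ with $0<|\alpha|<1$ that is simultaneously a regular, admissible point for each of the $r$ Mahler systems and at which every $f_i(\alpha)$ is transcendental. Here I would invoke the transcendence criterion for regular singular Mahler functions: since $f_i(z)$ is not a rational function, the set of algebraic $\alpha$ in its domain for which $f_i(\alpha)$ is algebraic is, after excluding the obstructed points, contained in a ``small'' exceptional set — concretely, the lifting theorem implies that $f_i(\alpha)$ algebraic forces a nontrivial relation over $\Q(z)$ between $f_i(z)$ and $1$, which (for $f_i$ not rational) can only happen at finitely many $\alpha$ of bounded height, or more carefully one uses that the exceptional set is contained in a finite union of orbits under $z\mapsto z^{q_i}$ and of zero sets of fixed nonzero polynomials. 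In any case each exceptional set is a proper ``thin'' subset, the admissibility conditions exclude only finitely many further points, and the complement of a finite union of such sets inside $\{\alpha\in\Q : 0<|\alpha|<1\}$ is nonempty (indeed infinite), so such an $\alpha$ exists.

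With such an $\alpha$ fixed, the argument closes: $f_1(\alpha),\ldots,f_r(\alpha)$ are all transcendental over $\Q$, the bases $q_1,\ldots,q_r$ are pairwise multiplicatively independent, and each $f_i$ is regular singular, so Theorem \ref{thm:main} (applying Conjecture \ref{conj: strongvm}(ii)) gives that $f_1(\alpha),\ldots,f_r(\alpha)$ are algebraically independent over $\Q$. This contradicts the nonzero algebraic relation $P(\alpha;f_1(\alpha),\ldots,f_r(\alpha))=0$ obtained by specializing a hypothetical relation over $\Q(z)$. Hence no such relation exists, i.e.\ $f_1(z),\ldots,f_r(z)$ are algebraically independent over $\Q(z)$, which is the assertion of Corollary \ref{coro:function}.
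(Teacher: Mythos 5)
Your proposal is correct and follows essentially the same route as the paper: both reduce the corollary to finding a single algebraic point $\alpha$ with $0<|\alpha|<1$ at which every $f_i$ is defined and takes a transcendental value (using the finiteness of the exceptional set in a compact subdisc, which the paper gets from Becker's Lemma 6 via Nishioka's theorem and you get from a lifting-theorem argument of the same flavour), and then apply Theorem \ref{thm:main}(ii) with $\alpha_1=\cdots=\alpha_r=\alpha$. The only difference is presentational — you phrase it as a contradiction with a specialized relation $P(\alpha;\cdot)=0$, while the paper argues directly — and your extra care about keeping $P(\alpha;X_1,\ldots,X_r)$ nonzero is a harmless refinement of the same argument.
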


Unfortunately, generating functions of automatic sequences are not always regular singular, and, 
consequently, Problems \ref{conj: strongf}, \ref{conj: weakv}, and 
\ref{conj: strongv} remain open. However, Theorem \ref{thm:main} and Corollary \ref{coro:function} 
mark significant progress towards their resolution. 

\medskip

This paper is organized as follows.  
In Section \ref{sec: basechange}, we state Problems \ref{conj: strongf}, \ref{conj: weakv}, and \ref{conj: strongv}, which 
were at the origin of our interest in Mahler's method. 
Section \ref{sec:purete} is devoted to the proof Theorem \ref{thm: purete2}. Theorem \ref{thm:main} and 
Corollary \ref{coro:function} are proved in Section \ref{sec:1variable}. In Section \ref{sec:several}, we discuss Mahler functions in 
several variables, generalizing Corollary \ref{coro:function} to this wider framework. In Section \ref{sec:spe}, we consider one-variable analytic functions 
obtained as specializations of Mahler functions in several variables. We define the notion of a 
\emph{good specialization} and prove yet another extension of 
Corollary \ref{coro:function} to this setting. Applications of our results to Hecke--Mahler series are given in Section \ref{sec: HeckeMahler}. 
We prove there Theorem \ref{th:HeckeMahler}, as well as two complementary results. 
In Section \ref{sec:ex} we show, through a final example, how our three main results can be combined together to derive algebraic independence of 
values of classical Mahler functions. Similar examples can be produced at will. 

\section{Two base change problems involving finite automata}\label{sec: basechange}

In this section, we first briefly recall some informal definitions of an automatic sequence and of an automatic 
set of natural numbers. We refer  the reader to the book of Allouche and Shallit \cite{AS} for more details.   
Then, we describe several base change problems involving these two notions.  

\subsection{Automatic sequences and automatic sets} 

Let $q \ge 2$ be an integer.
An infinite sequence ${\bf a}=(a_n)_{n\geq 0}$ is 
said to be $q$-automatic if $a_n$ is a finite-state function of the base-$q$ 
representation of $n$. This means that there exists a deterministic finite automaton with output 
(DFAO) taking the
 base-$q$ expansion of $n$ as input and producing the term $a_n$ as 
output. We say that a sequence is generated by a finite automaton, or for short is automatic,  
if it is $q$-automatic for some $q$. 

\begin{ex}\label{ex:ThueMorse}
One of the most famous example of a $2$-automatic sequence is the Thue--Morse sequence 
\begin{equation*}
{\boldsymbol{\mathfrak{tm}}}= 01 10 10 01 10 01 01\cdots \, ,
\end{equation*}
which is defined as follows.  Its $n$th term is equal to $0$ if the sum of the binary digits of $n$ is even, 
and it is equal to $1$ otherwise (see Figure \ref{AB:figure:thue}). 

\begin{figure}[htbp]
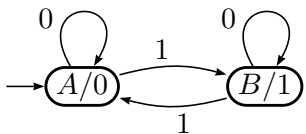

\centering
\VCDraw{%
        \begin{VCPicture}{(0,-0.5)(4,1.5)}
 \StateVar[A/0]{(0,0)}{a}  \StateVar[B/1]{(4,0)}{b}
\Initial[w]{a}
\LoopN{a}{0}
\LoopN{b}{0}
\ArcL{a}{b}{1}
\ArcL{b}{a}{1}
\end{VCPicture}
}
\caption{A $2$-automaton generating the Thue--Morse sequence.}
  \label{AB:figure:thue}
\end{figure}
\end{ex}

A set ${\mathcal E}\subset \mathbb N$ is said to be \emph{$q$-automatic} if its characteristic sequence, defined by 
$a_n=1$ if $n\in {\mathcal E}$ and by $a_n=0$ otherwise, is a $q$-automatic sequence. 
This means that there exists a 
DFAO taking the
 base-$q$ expansion of $n$ as input and accepting 
this natural number (producing as output the symbol $1$) if  $n$ belongs to 
${\mathcal E}$. Otherwise, this automaton rejects $n$, producing as output the symbol $0$. 

\begin{ex}
The set $\{1,2,4,8,16,\ldots\}$ formed by the powers of $2$ is a typical example 
of a $2$-automatic set (see Figure \ref{AB:figure:2n}). 
\begin{figure}[h]
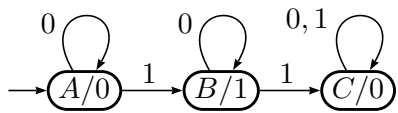

\centering 
\VCDraw{  \begin{VCPicture}{(0,-0.5)(6,2)}
    \StateVar[A/0]{(0,0)}{A} \StateVar[B/1]{(3,0)}{B}   \StateVar[C/0]{(6,0)}{C}  
    \Initial{A} 
     \EdgeL{A}{B}{1}  \EdgeL{B}{C}{1} \LoopN{A}{0} 
    \LoopN{B}{0}  \LoopN{C}{0,1}
  \end{VCPicture}}
  \caption{A $2$-automaton recognizing the powers of $2$.}
  \label{AB:figure:2n}
\end{figure}

\end{ex}

\subsection{Expansions of natural numbers in integer bases}

The proposition according to which a natural number is divisible by $9$ if and only if  
the sum of its digits (in decimal expansion) is itself divisible by $9$ is one of the most notorious arithmetic properties. 
Though sometimes more intricate, there are similar rules about the divisibility by $2,3,5,11$...   
Already in the seventieth century, the mathematician and philosopher Pascal \cite{Pas} addressed 
this problem in a general setting: 
\emph{I shall also set out a
general method which allows one to
discover, by simple inspection of its digits,
whether a number is divisible by an
arbitrary other number; this method
applies not only to our decimal system of
numeration (which system rests on a
convention, an unhappy one besides, and
not on a natural necessity, as the vulgar
think), but it also applies without fails to
every system of numeration having for base
whatever number one wishes, as may be
discovered in the following pages.} 
In a modern terminology, the existence of such simple divisibility rules in every integer base 
can be reformulated as follows.

\begin{fact}\label{fact:1} Let $a,b$ be non-negative integers. Then the arithmetic progression $a\mathbb N+b$ 
is a $q$-automatic set for all integers $q \geq 2$.  
\end{fact}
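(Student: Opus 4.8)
The plan is to prove Fact~\ref{fact:1} directly by exhibiting a DFAO that reads the base-$q$ expansion of $n$ and tracks exactly the information needed to decide whether $n \in a\mathbb{N}+b$, namely the residue of $n$ modulo $a$. First I would dispose of the trivial case $a=0$: then $a\mathbb{N}+b = \{b\}$, and any finite set is $q$-automatic (one can build an automaton that simply verifies, digit by digit reading from the most significant end, whether the input equals the fixed base-$q$ string of $b$, rejecting as soon as a discrepancy appears; formally, finiteness of the fiber $a_n^{-1}(1)$ together with eventual periodicity of the characteristic sequence suffices, but an explicit automaton is cleaner). So assume $a \geq 1$.

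For $a \geq 1$, the key step is the observation that reading a digit $d \in \{0,\ldots,q-1\}$ at the end of a base-$q$ string transforms the value $n$ of the string into $qn+d$. Hence if the automaton is in a state labelled by $r := n \bmod a$, after reading $d$ it should move to the state labelled $(qr+d) \bmod a$. I would therefore take the state set to be $\mathbb{Z}/a\mathbb{Z}$ (that is, $a$ states $0,1,\ldots,a-1$), the initial state $0$ (corresponding to the empty prefix, value $0$), the transition function $\delta(r,d) = (qr+d) \bmod a$, and the output function assigning $1$ to the state $b \bmod a$ and $0$ to all other states. A straightforward induction on the length of the input string shows that after reading the base-$q$ expansion of $n$ this automaton is in state $n \bmod a$, hence outputs $1$ precisely when $n \equiv b \pmod a$, i.e.\ precisely when $n \in a\mathbb{N}+b$ (using here $a \geq 1$ and $b \geq 0$, so that $n \in a\mathbb{N}+b$ is equivalent to $n \geq b$ and $n \equiv b \pmod a$ --- one must check the finitely many $n$ with $0 \le n < b$ and $n \equiv b \pmod a$ are correctly excluded; this is the one genuine subtlety).

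The only real obstacle is this last point: the naive residue automaton recognizes the full congruence class $\{n \geq 0 : n \equiv b \bmod a\}$, which contains $a\mathbb{N}+b$ but may also contain the finitely many ``small'' residues $b-a, b-2a, \ldots$ that are non-negative. To handle this I would refine the construction by taking a product automaton: one component tracks $n \bmod a$ as above, and a second component is a small acyclic counter that tracks $\min(n, b)$ (it starts at $0$, increments on each digit read according to the value-update rule but saturates at $b$, and thus ends in the ``saturated'' state iff $n \geq b$). The output function then returns $1$ iff the first component is $b \bmod a$ and the second component is saturated. Since both components have finitely many states and deterministic transitions, the product is again a DFAO, and by the inductions above it accepts exactly $a\mathbb{N}+b$. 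As $q \geq 2$ was arbitrary throughout, this establishes that $a\mathbb{N}+b$ is $q$-automatic for every $q \geq 2$, completing the proof. \qed
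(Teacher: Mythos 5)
Your construction is correct: the residue automaton with transition $\delta(r,d)=(qr+d)\bmod a$ is exactly the device the paper has in mind (it illustrates the fact only by the figure of a $2$-automaton computing $n\bmod 7$, giving no written proof), and you rightly handle the two genuine edge points — the case $a=0$ and the finitely many non-negative residues below $b$ — via a finite saturating counter in a product automaton. Nothing is missing; this is the standard argument and matches the paper's implicit approach.
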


 \begin{figure}[h]
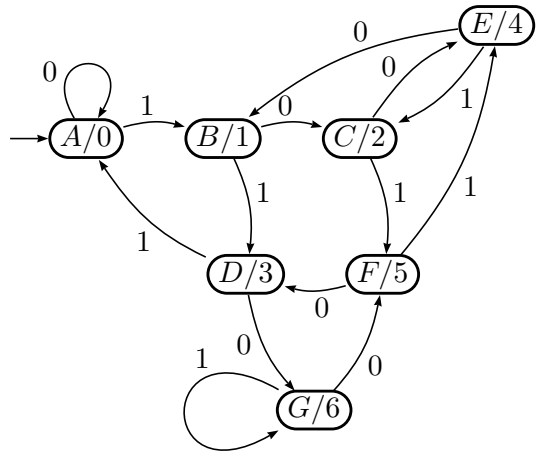

\centering
\VCDraw{%
        \begin{VCPicture}{(0,3)(9,-3.5)}
 \StateVar[A/0]{(0,0)}{0}  \StateVar[B/1]{(3,0)}{1}  \StateVar[C/2]{(6,0)}{2} \StateVar[E/4]{(9,2.5)}{4} 
 \StateVar[D/3]{(3.5,-3)}{3} \StateVar[F/5]{(6.5,-3)}{5} \StateVar[G/6]{(5,-6)}{6}
\Initial[w]{0}
\LoopN{0}{0}
\LoopW{6}{1}
\ArcL{0}{1}{1}
\ArcL{1}{2}{0}
\ArcL{2}{4}{0}
\ArcL{3}{0}{1}
\ArcL{5}{3}{0}
\ArcL{4}{2}{1}
\ArcL{1}{3}{1}
\ArcL{2}{5}{1}
\ArcR{5}{4}{1}
\ArcR{6}{5}{0}
\ArcR{3}{6}{0}
\ArcR{4}{1}{0}
\end{VCPicture}
}\caption{A $2$-automaton computing $n \bmod 7$.}
\end{figure}

Beyond divisibility rules, and this is not a great surprise, there usually does not exist any automatic test to determine the 
main arithmetical properties of natural numbers. 
For instance, prime numbers,  
perfect squares, and square-free numbers are not $k$-automatic sets, and this whatever the base $k$ chosen to represent 
the natural numbers. 
A notable exception is given by the set of natural numbers that can be written as the sum of three squares. 
Indeed, it follows from a theorem of Legendre that this set is $2$-automatic (see \cite{Co72}).

The first base change problem we consider is the following one. 
Though it is obvious to determine whether a binary natural number is a power of $2$,   
it seems more difficult to identify this property from its decimal expansion. 
This intuition can be formalized by showing that the set $\{2^n \mid n\geq 0\}$ 
is $2$-automatic, while it is not $10$-automatic.   
In 1969, Cobham \cite{Co69} proved the following fundamental theorem, 
solving completely the problem of the base-dependence for all automatic sets. 

\begin{thm}[Cobham] Let $q_1$ and $q_2$ be two multiplicatively independent natural numbers.  
A set $\mathcal E\subset \mathbb N$ is both $q_1$- and $q_2$-automatic if and only if it is a finite 
union of arithmetic progressions. 
\end{thm}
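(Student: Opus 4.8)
The plan for ``finite union of arithmetic progressions $\Rightarrow$ both $q_1$- and $q_2$-automatic'' is to combine Fact \ref{fact:1} with closure properties. By Fact \ref{fact:1}, each arithmetic progression $a\N+b$ is $q$-automatic for every integer $q\ge 2$. I would then use that the $q$-automatic subsets of $\N$ form a Boolean algebra: if $\mathbf{u},\mathbf{v}$ are $q$-automatic sequences over finite alphabets, then so is $\bigl((u_n,v_n)\bigr)_{n\ge 0}$, since its $q$-kernel embeds into the finite product of the $q$-kernels of $\mathbf{u}$ and $\mathbf{v}$, and any letter-to-letter recoding of a $q$-automatic sequence is again $q$-automatic; applying this to characteristic sequences gives closure under finite unions, intersections and complements. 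Hence a finite union of arithmetic progressions is $q$-automatic for every $q$, a fortiori for $q_1$ and $q_2$. This half uses no arithmetic relation between $q_1$ and $q_2$.

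\textbf{The hard implication.} Let $\mathbf{a}=(a_n)_{n\ge0}\in\{0,1\}^{\N}$ be the characteristic sequence of $\mathcal{E}$, which is both $q_1$- and $q_2$-automatic; I want to show $\mathbf{a}$ is eventually periodic, equivalently that $\mathcal{E}$ is a finite union of arithmetic progressions. The strategy is to bound the subword complexity $p_{\mathbf{a}}(n):=\#\{u\in\{0,1\}^n : u \text{ occurs in } \mathbf{a}\}$. Two standard facts will be used as black boxes: (i) a $q$-automatic sequence has at most linear complexity, $p_{\mathbf{a}}(n)\le Cn$; and (ii) the Morse--Hedlund theorem, by which $\mathbf{a}$ is eventually periodic as soon as $p_{\mathbf{a}}$ is bounded (equivalently $p_{\mathbf{a}}(n_0{+}1)=p_{\mathbf{a}}(n_0)$ for some $n_0$). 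So the genuine task is to upgrade ``linear'' to ``bounded'', and this is exactly where the multiplicative independence of $q_1$ and $q_2$ must enter.

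For that core step I would exploit a two-scale self-similarity. Since $\mathbf{a}$ is $q_1$-automatic it is a coding of a fixed point of a $q_1$-uniform morphism, so for every $k$ the factor of $\mathbf{a}$ filling a window $[mq_1^k,(m{+}1)q_1^k)$ depends, among boundedly many possibilities (independent of $k$ and $m$), only on the automaton state reached after reading the base-$q_1$ digits of $m$; likewise for $q_2$. Because $\log q_1/\log q_2$ is irrational, the scales $\{q_1^k\}$ and $\{q_2^\ell\}$ interleave arbitrarily finely: for every $\varepsilon>0$ there are arbitrarily large $k,\ell$ with $1\le q_1^k/q_2^\ell<1+\varepsilon$. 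I would then locate arbitrarily long windows on which the $q_1$-block structure at scale $q_1^k$ and the $q_2$-block structure at scale $q_2^\ell$ simultaneously constrain $\mathbf{a}$, and run a counting argument showing that these joint constraints force some factor of bounded length $n_0$ to recur with at most one right extension, i.e. $p_{\mathbf{a}}(n_0{+}1)=p_{\mathbf{a}}(n_0)$; Morse--Hedlund then yields eventual periodicity.

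\textbf{The main obstacle.} The only genuinely delicate point is the quantitative combinatorics of this two-scale argument — making uniform and precise how a factor seen simultaneously at the nearly equal but incommensurable scales $q_1^k$ and $q_2^\ell$ pins down $\mathbf{a}$. This is precisely the technical core of Cobham's original proof \cite{Co69}. As we invoke the theorem only as an input, I would not reproduce this argument here, referring to \cite{Co69} and to the modern exposition in \cite{AS} (which also reaches the conclusion through the logical framework $\langle\N,+,V_q\rangle$, where definability over Presburger arithmetic forces eventual periodicity).
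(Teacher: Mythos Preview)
The paper does not give its own proof of this statement: Cobham's theorem is quoted in Section~\ref{sec: basechange} purely as historical background and motivation, with a bare citation to \cite{Co69}. There is therefore no proof in the paper to compare your attempt against.

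On the substance of your sketch: the easy direction is fine. For the hard direction, your plan (linear subword complexity for automatic sequences, then Morse--Hedlund once complexity is shown to be bounded, the boundedness coming from simultaneous self-similarity at the incommensurable scales $q_1^k$ and $q_2^\ell$) is a legitimate route, but two remarks are in order. First, this is \emph{not} Cobham's original argument in \cite{Co69}; the complexity/Morse--Hedlund packaging is closer to later proofs (Hansel, Durand), so your sentence ``This is precisely the technical core of Cobham's original proof'' is historically inaccurate. Second, and more importantly, you do not actually carry out the core step: the passage ``run a counting argument showing that these joint constraints force some factor of bounded length $n_0$ to recur with at most one right extension'' is where the entire difficulty lies, and you explicitly defer it to the literature. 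As written, then, your proposal is not a proof but a proof outline that ends in a citation --- which, given that the paper itself simply cites the result, is adequate for the purpose at hand, but should not be presented as a self-contained argument.
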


\begin{rem}
In addition, we recall that if a set is $q_1$-automatic, then it is also $q_2$-automatic for all integers $q_2$  multiplicatively dependent with $q_1$.  
\end{rem}

In other words, divisibility rules are the only \emph{automatic rules} 
whose existence does not depend on the 
base. In more algebraic terms,  we expect that Cobham's theorem 
can be strengthened as follows.

\begin{prob}\label{conj: strongf}
Let $q_1$ and $q_2$ be two multiplicatively independent natural numbers.  
Let $\mathcal E_1$ be a $q_1$-automatic set and $\mathcal E_2$ be a $q_2$-automatic set.  
Prove that if the generating functions
$$
f_1(z)=\sum_{n\in\mathcal E_1} z^n \quad\mbox{and} \quad f_2(z)=\sum_{n\in\mathcal E_2} z^n 
$$
are both not rational, then they are algebraically independent over $\Q(z)$.  
\end{prob}

\subsection{Computational complexity of real numbers, finite automata, and base dependence}

Similar questions occur when replacing sequences of natural numbers by real numbers.  
However, these are often much harder to handle.   
We consider the computational complexity of real numbers with respect to a given integer base $b$. 
The most simple class is formed by the \emph{automatic real numbers}, that is, those whose base-$b$ expansion can be 
generated by a finite automaton.  
The analogue of Fact \ref{fact:1} reads as follows.    

\begin{fact}
For all integers $b\geq 2$, the base-$b$ expansion of a rational number can be generated by a finite automaton.  
\end{fact}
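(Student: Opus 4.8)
The plan is to reduce the statement to two classical facts: first, that the base-$b$ expansion of a rational number is eventually periodic; second, that any eventually periodic sequence is $q$-automatic for every $q\geq 2$, and in particular for $q=b$. Granting these two points, the base-$b$ digit sequence of a rational number is $b$-automatic, which is precisely the assertion. Both ingredients are folklore, so the work is just to assemble them; this is also why the statement is recorded as a \emph{Fact}.

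For the first point I would argue by long division. Write the rational number as $\lfloor\xi\rfloor$ plus a fractional part $p/q$ with $0\leq p<q$ and $q\geq 1$; the integer part contributes only finitely many digits and is harmless. Put $r_0=p$ and, for $n\geq 0$, set $a_{n+1}=\lfloor b r_n/q\rfloor\in\{0,\ldots,b-1\}$ and $r_{n+1}=b r_n-q a_{n+1}$, so that $r_{n+1}\equiv b r_n\pmod q$ and $r_{n+1}\in\{0,\ldots,q-1\}$; the $(a_n)_{n\geq 1}$ are exactly the base-$b$ digits of $p/q$. Since the $r_n$ live in a set of size $q$, the pigeonhole principle gives $i<j\leq q$ with $r_i=r_j$; as $r_{n+1}$ is a function of $r_n$, the sequence $(r_n)_{n\geq i}$ is periodic with period dividing $j-i$, hence so is $(a_n)_{n>i}$. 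Thus the full digit sequence is eventually periodic.

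For the second point, suppose $(a_n)_{n\geq 0}$ is eventually periodic, say $a_{n+T}=a_n$ for all $n\geq N$, with $T\geq 1$. Then the sets $S_j=\{\,n\geq N : n\equiv j\pmod T\,\}$ for $0\leq j<T$, together with the singletons $\{0\},\{1\},\ldots,\{N-1\}$, form a finite partition of $\mathbb N$ on each block of which $n\mapsto a_n$ is constant. Each $S_j$ is the intersection of the arithmetic progression $T\mathbb N+j$, which is $q$-automatic by Fact \ref{fact:1}, with the cofinite set $\{n:n\geq N\}$, which is plainly $q$-automatic; and each singleton is $q$-automatic. Since $q$-automatic sets are closed under finite Boolean combinations, the corresponding product automaton, run on the base-$q$ expansion of $n$, determines which block contains $n$, and emitting the associated digit realizes $(a_n)_{n\geq 0}$ as the output of a DFAO. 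Hence $(a_n)_{n\geq 0}$ is $q$-automatic.

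There is no genuine obstacle here. If one insists on a delicate point, it is only the verification that a finite automaton reading base-$q$ digits can compute $n\bmod T$ and can single out the finitely many exceptional indices below $N$ — but both are routine, and are in spirit exactly the constructions illustrated by the automata drawn above, such as the $2$-automaton computing $n\bmod 7$.
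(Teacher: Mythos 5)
Your proposal is correct and complete. The paper records this statement as a \emph{Fact} without proof, and your argument is the standard one it implicitly relies on: long division shows the digit sequence of a rational is eventually periodic (pigeonhole on the finitely many remainders), and an eventually periodic sequence is $q$-automatic for every $q\geq 2$ because arithmetic progressions and finite sets are $q$-automatic and automatic sets are closed under Boolean combinations, so a product DFAO computing $n\bmod T$ together with a check for the finitely many exceptional indices outputs the digits. Nothing is missing; in particular you correctly dispose of the integer part and correctly justify that $a_n$ is constant on each residue class $S_j$ once $n\geq N$.
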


In this setting, the study of classical sequences of natural numbers, such as prime numbers, perfect squares, 
and square-free numbers is replaced by 
the study of classical irrational mathematical constants such as $\sqrt 2$ and $\pi$.  
This is an old source of frustration for mathematicians. While these numbers have very simple 
geometric descriptions, their decimal 
expansions 
$$
\begin{array}{cccc}
&\langle \sqrt 2\rangle_{10} &= &1.414\,213\, 562\, 373\, 095\, 048\, 801\, 688\, 724\, 209\, 698\, 078\, 569 \cdots \\ 
\text{ and} & &&\\

&  \langle \pi\rangle_{10}& = &3.141\, 592\, 653\, 589\, 793\, 238\, 462\, 643\, 383\, 279\, 502\,884\, 197  \cdots 
\end{array}
$$
remain totally mysterious. In this area, a major problem is to prove that none of the numbers $\sqrt 2,\pi,e,\log 2$ 
is automatic.  This problem is still widely open, but Bugeaud and the first author \cite{ABu07} proved that 
no algebraic irrational real number is automatic\footnote{Cassaigne and the first author \cite{AC06} 
also proved that Liouville numbers are not automatic. This was extended to Mahler's $U$-numbers 
by Bugeaud and the first auhtor \cite{ABu11}.}.  
On the other hand, automatic irrational real numbers, such as  
the binary Thue--Morse number 
$$
\langle \tau \rangle_2 = 0.011\, 010\, 011\, 001\, 011\, 010\, 010\, 110\, 011\, 010\, 011\, 001\, 011 \cdots\,,
$$
do exist. Though $\tau$ has a simple binary expansion, its decimal expansion  

$$
\langle \tau \rangle_{10} = 0.412\, 454 \, 033 \,640\, 107\, 597\, 783\, 361\, 368\, 258\, 455\, 283\, 089\cdots 
$$
seems much more unpredictable. This leads us to consider our second base change problem.     
Problem \ref{conj: weakv} below is a stronger form of Problem 7 in \cite[p. 403]{AS}. 
It can be thought of as the analogue of Cobham's theorem in this setting. 

\begin{prob}\label{conj: weakv}
Let $b_1$ and $b_2$ be two multiplicatively independent natural numbers. 
Prove that a real number is automatic in both bases $b_1$ and $b_2$ if and only if it is a 
rational number. 
\end{prob}

\begin{rem}
Using some classical results about automatic sequences, it can be shown that if 
a real number is automatic in base $b$, then it is also automatic in all bases that are multiplicatively dependent with $b$.  
\end{rem}

We also consider the following much stronger version of Problem \ref{conj: weakv}. 

\begin{prob}\label{conj: strongv}
Let $r\geq 1$ be an integer. Let $b_1,\ldots,b_r$ be pairwise multiplicatively independent natural numbers, 
and, for every $i$, $1\leq i \leq r$, 
let  $\xi_i$ be an irrational real number whose base-$b_i$ expansion can be generated by a finite automaton.  
Prove that the numbers $\xi_1,\ldots,\xi_r$ are algebraically independent over $\Q$. 
\end{prob}

Problem \ref{conj: weakv} is only solved for $r=1$ in \cite{ABu07}. 

\subsection{Connection with Mahler's method}

The following fundamental connection between finite automata and Mahler functions was noticed by Cobham in 1968 \cite{Co68}. 

\begin{itemize}

\item[{\rm (C)}] Let $q\geq 2$ be an integer and ${\bf a}=(a_n)_{n\geq 0}$ be a $q$-automatic sequence with values in $\Q$.  
Then the generating function  
$$
f_{\bf a}(z):= \sum_{n=0}^{\infty}a_nz^n 
$$
is a $q$-Mahler function. 
\end{itemize}

Our main problems can thus be extended as problems concerning Mahler functions.  
For instance, Property (C) led Loxton and van der Poorten (see \cite{vdP87}) to conjecture the following generalization of Cobham's theorem, 
which was later proved by Bell 
and the first author \cite{AB}.    

\begin{thm}[A. and Bell]\label{thm: ABe} 
Let $q_1$ and $q_2$ be two multiplicatively independent positive integers. A power series $f(z)\in \Q[[z]]$ is both 
$q_1$- and $q_2$-Mahler if and only if it is a rational function. 
\end{thm}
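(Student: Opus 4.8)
The ``if'' direction is elementary and explains the terminology ``base change problem'': if $f=P/Q$ with $P,Q\in\Q[z]$, $Q\not=0$, then (assuming $P\not=0$, the case $f=0$ being trivial) for every integer $q\geq 2$ the polynomials $p_0(z):=P(z^q)Q(z)$ and $p_1(z):=-P(z)Q(z^q)$ are not both zero and satisfy
\begin{equation*}
p_0(z)f(z)+p_1(z)f(z^q)=P(z^q)P(z)-P(z)P(z^q)=0,
\end{equation*}
so $f$ is $q$-Mahler. Thus a rational function is $q$-Mahler for every $q$, and the whole content is the converse: a power series in $\Q[[z]]$ that is simultaneously $q_1$- and $q_2$-Mahler, with $q_1,q_2$ multiplicatively independent, must be rational.

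\textbf{The regular singular case.} Suppose that $f$ is $q_1$-Mahler and $q_2$-Mahler and that it is regular singular both as a $q_1$-Mahler and as a $q_2$-Mahler function, in the sense of Definition~\ref{defi:regular_singular_une_variable}. If $f$ were not rational, then Corollary~\ref{coro:function}, applied with $r=2$, with the exponents $q_1$ and $q_2$, and with $f_1=f_2=f$, would assert that $f$ and $f$ are algebraically independent over $\Q(z)$ --- which is absurd. Hence in the regular singular case $f$ is rational. (Alternatively one may invoke Theorem~\ref{thm:main} directly.) So the new results of this work dispose of the generic situation at once; the difficulty lies entirely in the non-regular-singular part.

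\textbf{The general case: plan.} A power series that is both $q_1$- and $q_2$-Mahler need not be regular singular for either structure, and here I would follow the strategy of \cite{AB}. First, reduce to the transcendental case via the well-known dichotomy: a $q$-Mahler function in $\Q[[z]]$ is either rational or transcendental over $\Q(z)$. Next, record the analytic rigidity of a non-rational Mahler function: it is not a polynomial and has radius of convergence at most $1$ (compare the growth rates of the iterates $f(z^{q^i})$ in a minimal-order equation), so $f$ --- and all the functions $f(z^{q_1^aq_2^b})$ --- are analytic on the open unit disc. After a standard reduction to coefficients in the ring of integers of a fixed number field, localized away from finitely many primes, one controls the denominators, equivalently the $p$-adic radii of convergence, of $f$ by means of the two functional equations. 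Finally one exploits both equations simultaneously: substituting $z\mapsto z^{q_2^b}$ in the $q_1$-equation shows that each $f(z^{q_2^b})$ is again $q_1$-Mahler (with shifted polynomial coefficients), and symmetrically; studying the $\Q(z)$-module generated by $\{f(z^{q_1^aq_2^b}):a,b\geq 0\}$ --- in the spirit of a two-variable Mahler system with transformation ${\rm diag}(q_1,q_2)$ on $(z_1,z_2)$ specialized along $z_1=z_2=z$ --- and combining the multiplicative independence of $q_1$ and $q_2$ with the denominator bounds just obtained, one extracts a nontrivial polynomial relation among finitely many of the $f(z^{q_1^{a_j}q_2^{b_j}})$. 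Such a relation contradicts the transcendence of $f$, and the theorem follows.

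\textbf{Main obstacle.} The serious step is the last one: iterating \emph{both} Mahler operators does not formally keep one inside a finite-dimensional $\Q(z)$-space, so producing a usable finite relation really needs the arithmetic input of the preceding step (the control of denominators and the analytic rigidity on the unit disc). This interplay is the technical heart of \cite{AB}. By contrast the ``if'' direction, the reduction to the transcendental case, and the regular singular case via Corollary~\ref{coro:function} are all soft.
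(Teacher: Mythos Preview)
The paper does not contain a proof of this theorem: it is quoted as a known result, proved in \cite{AB} and reproved by Sch\"afke and Singer \cite{SS1,SS2}, and serves here only as background and motivation for Problem~\ref{conj: strongf2}. So there is no ``paper's own proof'' to compare your attempt against.

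That said, a few comments on what you wrote. Your treatment of the ``if'' direction is correct. Your observation that the regular singular case follows at once from Corollary~\ref{coro:function} (with $f_1=f_2=f$) is valid and pleasant; there is no circularity, since Corollary~\ref{coro:function} is proved in this paper via the purity and lifting theorems without any appeal to Theorem~\ref{thm: ABe}.

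For the general case, however, what you have written is a plan rather than a proof. You correctly identify the standard reductions (the rational/transcendental dichotomy for Mahler functions, convergence on the open unit disc, reduction to $S$-integral coefficients), but the decisive step --- producing a nontrivial algebraic relation among finitely many $f(z^{q_1^a q_2^b})$ from the two functional equations together with the multiplicative independence of $q_1,q_2$ --- is only asserted, not carried out. Your description of this step (``studying the $\Q(z)$-module generated by $\{f(z^{q_1^a q_2^b})\}$'', ``denominator bounds'', ``two-variable Mahler system specialized along $z_1=z_2$'') does not accurately reflect the actual argument in \cite{AB}, which proceeds via the theory of $k$-regular sequences and ultimately relies on Cobham's theorem itself, nor the Galois-theoretic approach of \cite{SS1,SS2}. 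As written, your sketch does not supply the mechanism that forces a relation, and the phrase ``one extracts a nontrivial polynomial relation'' hides precisely the content of the theorem. If you want a self-contained argument you must either reproduce one of those two proofs or find a genuinely new one; the outline you give does not do either.
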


Recently, Sch\"afke and Singer \cite{SS1,SS2} give a totally different proof of Theorem \ref{thm: ABe} based on the Galois theory of difference equations 
associated with the Mahler operators $\sigma_q: z \mapsto z^q$. The great advantage of this new proof is that it does not make use 
of Cobham's theorem. 
Our last problem, which generalizes Problem \ref{conj: strongf} and Theorem \ref{thm: ABe}, reads as follows.

\begin{prob}\label{conj: strongf2}
Let $r\geq 1$ be an integer. Let $q_1,\ldots,q_r$ be pairwise multiplicatively independent natural numbers, 
and, for every $i$, $1\leq i \leq r$, let  $f_i(z)\in\Q[[z]]$ be a $q_i$-Mahler function that is not a rational function.  
Prove that $f_1(z),\ldots,f_r(z)$ are algebraically independent over $\Q(z)$. 
\end{prob}

Problem \ref{conj: strongf2} is only solved for $r=1$ (see \cite[Theorem 5.1.7]{Ni_Liv}). 
Recently, a partial solution to the case $r=2$ was obtained in \cite{ADH} 
using the Galois theory of parametrized difference equations.  
Using (C), it is easy to check that part (i) of Conjecture \ref{conj: strongvm} would allow us to 
solve Problems \ref{conj: weakv} and \ref{conj: strongv}. 
Furthermore, the proof of Corollary \ref{coro:function} given in Section \ref{sec:1variable} shows 
that part (ii) of Conjecture \ref{conj: strongvm} would 
allow us to solve 
Problems \ref{conj: strongf} and \ref{conj: strongf2}.

\section{Proof of Theorem \ref{thm: purete2}}\label{sec:purete}

In this section, we show how to deduce our second purity theorem from the lifting theorem. 
We first prove Theorem \ref{thm: purete2} when all the sets $\mathcal E_i$ have maximal cardinality.  

\begin{lem}\label{lem:puritysystementiers}
We continue with the assumptions of Theorem \ref{thm: purete2}. If for every $i$, 
$1\leq i \leq r$, one has 
$$
\mathcal E_i= \{f_{i,1}(\balpha_i),\ldots,f_{i,m_i}(\balpha_i)\}\, ,
$$
then  
$$
{\rm Alg}_{\Q}(\mathcal E) = \sum_{i=1}^r {\rm Alg}_{\Q}(\mathcal E_i \mid \mathcal E)\, .
$$
\end{lem}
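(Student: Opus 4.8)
The plan is to deduce Lemma~\ref{lem:puritysystementiers} directly from the lifting theorem (Theorem~2.1 of \cite{AF3}), by building a single Mahler system in several variables out of the $r$ given systems and applying the lifting theorem once to that combined system. First I would form the block system whose transformation matrix is
$$
T = \left(\begin{array}{ccc} T_1 & & \\ & \ddots & \\ && T_r \end{array}\right),
$$
acting on the disjoint family of indeterminates $\z = (\z_1,\ldots,\z_r)$, and whose matrix is the block-diagonal matrix $A(\z) = \mathrm{diag}(A_1(\z_1),\ldots,A_r(\z_r)) \in \mathrm{GL}_m(\Q(\z))$, with $m = m_1 + \cdots + m_r$. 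The vector of solutions is the concatenation of the $r$ solution vectors; call it $\f(\z) = (f_{1,1}(\z_1),\ldots,f_{r,m_r}(\z_r))$. Since each summand is regular singular and $T$ has spectral radius $\rho$ (being block diagonal), the combined system is again a regular singular Mahler system with transformation $T$. The point $\balpha = (\balpha_1,\ldots,\balpha_r)$ lies in $(\Q^\star)^{n}$ with $n = n_1 + \cdots + n_r$.

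**Checking admissibility and applying the lifting theorem.** The next step is to verify that $\balpha$ is a regular point for the combined system and that $(T,\balpha)$ is admissible. Regularity is block-wise: the denominators appearing along the orbit of $\balpha$ under $T$ are exactly the products of the denominators appearing along the orbits of the $\balpha_i$ under $T_i$, so hypothesis (i) gives regularity of the combined system. For admissibility of $(T,\balpha)$, one combines hypothesis (i) — admissibility of each $(T_i,\balpha_i)$ — with hypothesis (ii), the $T$-independence of $\balpha$; this is precisely the point where the ``independent points'' hypothesis is used, and I would expect the definition of admissibility (recalled from \cite{AF3}) to be tailored so that this implication is essentially immediate, or at worst a short orbit-theoretic verification. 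With these checks in hand, the lifting theorem applies to $(\f,\balpha)$: every homogeneous polynomial relation over $\Q$ among the values $f_{i,j}(\balpha_i)$ lifts to a homogeneous polynomial relation over $\Q(\z)$ among the functions $f_{i,j}(\z_i)$.

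**From the lifted relations to the direct sum decomposition.** The final and genuinely substantive step is to show that $\mathrm{Alg}_{\Q(\z)}$-relations among the functions $f_{i,j}(\z_i)$ decompose as a direct sum over $i$, because the $i$-th block involves only the disjoint variable set $\z_i$. Concretely, if $P(\bX_1,\ldots,\bX_r) = 0$ is a polynomial relation over $\Q(\z)$ among the functions (with $\bX_i$ the block of variables for the $i$-th system), then specializing / using that the $\z_i$ are algebraically independent transcendentals, one can separate $P$ into its pieces: any such $P$ lies in the ideal generated by the relations internal to the individual blocks. This is a statement purely about algebraic independence of the coordinate functions across disjoint variable sets, and it translates, via the lifting theorem in both directions, into
$$
\mathrm{Alg}_{\Q}(\mathcal E) = \sum_{i=1}^r \mathrm{Alg}_{\Q}(\mathcal E_i \mid \mathcal E).
$$
I would phrase the bookkeeping in terms of the ideal-theoretic description of $\mathrm{Alg}_\Q$ used throughout the paper, matching whatever notation $\mathrm{Alg}_\Q(\mathcal E_i \mid \mathcal E)$ carries (relations among $\mathcal E_i$, viewed inside the polynomial ring on all of $\mathcal E$).

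**Main obstacle.** I expect the only real difficulty to be the middle step: checking that $T$-independence of $\balpha$ together with block-wise admissibility yields admissibility of the combined pair $(T,\balpha)$ — this is where the hypothesis has been designed to do its work, and getting the definitions to line up (spectral radius equal to $\rho$ for each block, the orbit/valuation conditions, and the precise meaning of $T$-independence) is the crux. The reduction of Theorem~\ref{thm: purete2} from this full-cardinality case to arbitrary $\mathcal E_i \subseteq \{f_{i,1}(\balpha_i),\ldots,f_{i,m_i}(\balpha_i)\}$ is presumably a separate, more formal argument (projecting relations), carried out after the lemma.
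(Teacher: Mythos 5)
Your proposal follows essentially the same route as the paper: form the block-diagonal system on the disjoint variable families, apply the lifting theorem once to lift a relation $P\in{\rm Alg}_{\Q}(\mathcal E)$ to a relation $Q$ among the functions, decompose $Q=Q_1+\cdots+Q_r$ using the disjointness of the $\z_i$ (the paper does this by citing Lemma~8.2 of \cite{AF3}), and specialize back at $\balpha$ to get $P=P_1+\cdots+P_r$ with $P_i\in{\rm Alg}_{\Q}(\mathcal E_i\mid\mathcal E)$. The approach is correct and matches the paper's proof.
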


\begin{proof}
Set $\z:=(\z_1,\ldots,\z_r)$, and let us consider the block diagonal $T$-Mahler system 
$$
\left(\begin{array}{c}f_{1,1}(\z_1)\\ \vdots \\ f_{1,m_1}(\z_1) \\  \vdots \\ \vdots \\ f_{r,1}(\z_r) \\ \vdots \\ f_{r,m_r}(\z_r) \end{array} \right)
=
\left(\begin{array}{cccccccc} & & & & & & & \\ & A(\z_1)& &&& && \\ &&&&&& &\\ &&& \ddots &&&& \\&&&& \ddots &&& \\ &&&&&&& 
\\ &&&&&& A(\z_r) & \\ &&&&&&& \end{array} \right)
\left(\begin{array}{c}f_{1,1}(T\z_1)\\ \vdots \\ f_{1,m_1}(T\z_1) \\ \vdots \\ \vdots \\ f_{r,1}(T\z_r) \\ \vdots \\ f_{r,m_r}(T\z_r) \end{array} \right)\, .
$$
We also consider $r$ families of indeterminates 
$$
\X_1:=(X_{1,1},\ldots,X_{1,m_1}),\ldots,\X_r:=(X_{r,1},\ldots,X_{r,m_r})\, ,
$$
and set $\X:=(\X_1,\ldots,\X_r)$. 
We are going to prove that ${\rm Alg}_{\Q}(\mathcal E) \subset \sum_{i=1}^r {\rm Alg}_{\Q}(\mathcal E_i \mid \mathcal E)$, 
the converse inclusion being trivial. 
Let $P(\X) \in {\rm Alg}_{\Q}(\mathcal E)$.  
We infer from the lifting theorem that there exists a polynomial $Q \in \Q[\z,\X]$, 
such that 
$$
Q(\balpha_1,\ldots,\balpha_r,\X)=P(\X) \qquad \text{ and } \qquad Q(\z,f_1(\z_1),\ldots,f_m(\z_r))=0 \, .
$$
Now, since the families of variables $\z_1,\z_2,\ldots,\z_r$ are all disjoint, Lemma 8.2 of \cite{AF3} implies that there exists a decomposition 
$Q=Q_1+\cdots+Q_r$ in $\Q[\z,\X]$ such that 
$$
Q_i(\z,\X_1,\ldots,\X_{i-1},f_{i,1}(\z_i),\ldots,f_{i,m_i}(\z_i),\X_{i+1},\ldots,\X_r)=0 \,,
$$
for every $i$, $1\leq i \leq r$. 
Setting $P_i=Q_i(\balpha_1,\ldots,\balpha_r,\X)$, we obtain that $P=P_1+\cdots+P_r$. Furthermore, 
one has 
$$
P_i(\X_1,\ldots,\X_{i-1},f_{i,1}(\balpha_i),\ldots,f_{i,m_i}(\balpha_i),\X_{i+1},\ldots,\X_r)=0 \, ,
$$
for every $i$, $1\leq i\leq r$. Hence, $P_i \in {\rm Alg}_{\Q}(\mathcal E_i \mid \mathcal E)$, which ends the proof.
\end{proof}

Before proving Theorem \ref{thm: purete2}, we need the following simple lemma about transcendence degrees.

\begin{lem}\label{lem:transcendancedegree}
Let $\mathcal E_1, \ldots,\mathcal E_r,\mathcal F_1,\ldots, \mathcal F_r$ be non-empty finite sets of complex numbers, 
such that $\mathcal E_i \subset \mathcal F_i$, for every $i$, $1 \leq i \leq r$. Let us assume that  
$$
{\rm tr.deg}_{\Q}\left(\bigcup_{i=1}^r \mathcal F_i\right) = \sum_{i=1}^r {\rm tr.deg}_{\Q}(\mathcal F_i)\, .
$$
Then 
$$
{\rm tr.deg}_{\Q}\left(\bigcup_{i=1}^r \mathcal E_i\right) = \sum_{i=1}^r {\rm tr.deg}_{\Q}(\mathcal E_i)\, .
$$
\end{lem}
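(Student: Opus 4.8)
This is a general fact about the matroid of algebraic independence over $\Q$, so the plan is to argue entirely with transcendence bases and the exchange property, using nothing specific about the numbers involved.

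First I would dispose of the easy inequality. For each $i$, pick a transcendence basis $B_i$ of $\mathcal E_i$ over $\Q$; then $\bigcup_{i=1}^r B_i$ algebraically generates $\bigcup_{i=1}^r \mathcal E_i$, so
$$
{\rm tr.deg}_{\Q}\Big(\bigcup_{i=1}^r \mathcal E_i\Big) \ \leq \ \Big|\bigcup_{i=1}^r B_i\Big| \ \leq \ \sum_{i=1}^r |B_i| \ = \ \sum_{i=1}^r {\rm tr.deg}_{\Q}(\mathcal E_i)\,.
$$
The content is the reverse inequality, and this is where the hypothesis on the $\mathcal F_i$ enters. Using that an algebraically independent set extends to a transcendence basis, I would enlarge each $B_i$ to a transcendence basis $C_i$ of $\mathcal F_i$ over $\Q$, so that $B_i \subseteq C_i$, the set $C_i$ is algebraically independent over $\Q$, and $|C_i| = {\rm tr.deg}_{\Q}(\mathcal F_i)$.

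Now $\bigcup_{i=1}^r C_i$ algebraically generates $\bigcup_{i=1}^r \mathcal F_i$, hence
$$
{\rm tr.deg}_{\Q}\Big(\bigcup_{i=1}^r \mathcal F_i\Big) \ \leq \ \Big|\bigcup_{i=1}^r C_i\Big| \ \leq \ \sum_{i=1}^r |C_i| \ = \ \sum_{i=1}^r {\rm tr.deg}_{\Q}(\mathcal F_i)\,.
$$
By the hypothesis the two extremes coincide, so both inequalities are equalities. Equality on the left forces $\bigcup_{i=1}^r C_i$ to be a transcendence basis of $\bigcup_{i=1}^r \mathcal F_i$, in particular algebraically independent over $\Q$; equality on the right forces the $C_i$ to be pairwise disjoint. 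Since $\bigcup_{i=1}^r B_i \subseteq \bigcup_{i=1}^r C_i$, the set $\bigcup_{i=1}^r B_i$ is algebraically independent over $\Q$, and the $B_i$ are pairwise disjoint too; therefore
$$
{\rm tr.deg}_{\Q}\Big(\bigcup_{i=1}^r \mathcal E_i\Big) \ \geq \ \Big|\bigcup_{i=1}^r B_i\Big| \ = \ \sum_{i=1}^r |B_i| \ = \ \sum_{i=1}^r {\rm tr.deg}_{\Q}(\mathcal E_i)\,,
$$
which together with the first display gives the claimed equality.

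There is no serious obstacle here; the only point that needs a little attention is that the finite sets $\mathcal E_i$ (and $\mathcal F_i$) are not assumed pairwise disjoint, so the identity $|\bigcup_i B_i| = \sum_i |B_i|$ is not free — but it drops out of the equality case above, since algebraic independence of $\bigcup_i C_i$ already rules out repeated elements across the $C_i$. Everything else is the standard dimension theory of transcendence degree.
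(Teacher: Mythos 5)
Your proof is correct, but it takes a genuinely different route from the one in the paper. The paper argues by a descending induction on the cardinalities of the $\mathcal E_i$: it removes one element $\xi\in\mathcal F_{i_0}\setminus\mathcal E_{i_0}$ at a time and splits into two cases according to whether $\xi$ is algebraic or transcendental over $\Q(\mathcal E_{i_0})$, checking in each case that both sides of the desired equality change by the same amount. You instead give a one-shot matroid argument: choose transcendence bases $B_i$ of $\mathcal E_i$, extend them to transcendence bases $C_i$ of $\mathcal F_i$, and read off from the collapsed chain ${\rm tr.deg}_{\Q}\bigl(\bigcup_i\mathcal F_i\bigr)\leq\bigl|\bigcup_i C_i\bigr|\leq\sum_i|C_i|$ that $\bigcup_i C_i$ is a (disjoint) union of the $C_i$ and is algebraically independent, whence so is $\bigcup_i B_i$. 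This is shorter, avoids the induction and the dichotomy entirely, and makes transparent that the statement is a general fact about pregeometries (it actually yields the slightly stronger conclusion that \emph{any} families of algebraically independent elements chosen inside the respective $\mathcal E_i$ remain jointly independent). The paper's induction is more pedestrian but uses only the additivity of transcendence degree in towers rather than basis extension. One small imprecision in your closing remark: algebraic independence of $\bigcup_i C_i$ does \emph{not} by itself rule out an element shared between $C_i$ and $C_j$ (the union, as a set, could still be independent); the pairwise disjointness comes from the cardinality equality $\bigl|\bigcup_i C_i\bigr|=\sum_i|C_i|$, which is exactly what your main argument invokes, so nothing is affected.
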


\begin{proof}
We prove this lemma by using a descending induction on the size of the sets $\mathcal E_i$. 
When $\mathcal E_i = \mathcal F_i$ for all $i$, there is nothing to prove. 
Let us now assume that there exists an index $i_0$ such that $\mathcal E_{i_0} \subsetneq \mathcal F_{i_0}$, and such that the theorem is proved for larger  
$\mathcal E_{i_0}$, the other sets $\mathcal E_i$ being unchanged. 
Without loss of generality, we assume that $i_0=1$. We pick a number $\xi \in \mathcal F_1 \setminus \mathcal E_1$, and set 
$\mathcal E_1'=\mathcal E_1 \cup \{\xi\}$. We consider two different cases. First, we assume that $\xi$ 
is algebraic over $\Q(\mathcal E_1)$. Then  
$$
{\rm tr.deg}_{\Q}(\mathcal E_1')={\rm tr.deg}_{\Q}(\mathcal E_1)\, ,
$$
and, as $\xi$ is also algebraic over $\Q\left(\bigcup_{i=1}^r \mathcal E_i\right)$, we deduce that 
$$
{\rm tr.deg}_{\Q}\left(\mathcal E_1' \cup \bigcup_{i=2}^r \mathcal E_i\right)={\rm tr.deg}_{\Q}\left(\bigcup_{i=1}^r \mathcal E_i\right)\, .
$$
By assumption, we thus obtain that 
\begin{eqnarray*}
{\rm tr.deg}_{\Q}\left(\bigcup_{i=1}^r \mathcal E_i\right) & = & {\rm tr.deg}_{\Q}\left(\mathcal E_i' \cup \bigcup_{i=2}^r \mathcal E_i\right)
\\ & = & {\rm tr.deg}_{\Q}(\mathcal E_i') + \sum_{i=2}^r {\rm tr.deg}_{\Q}(\mathcal E_i)
\\ & = & \sum_{i=1}^r {\rm tr.deg}_{\Q}(\mathcal E_i)\, ,
\end{eqnarray*}
as wanted. 
Now, we assume that $\xi$ is transcendental over $\Q(\mathcal E_i)$. Then
$$
{\rm tr.deg}_{\Q}(\mathcal E'_1)={\rm tr.deg}_{\Q}(\mathcal E_1) + 1\, .
$$
By assumption, we deduce that $\xi$ is also transcendental over $\Q\left(\bigcup_{i=1}^r \mathcal E_i\right)$. 
Then
\begin{eqnarray*}
{\rm tr.deg}_{\Q}\left(\bigcup_{i=1}^r \mathcal E_i\right)& = & {\rm tr.deg}_{\Q}\left(\mathcal E_i' \cup \bigcup_{i=2}^r \mathcal E_i\right)-1
\\ & = & {\rm tr.deg}_{\Q}(\mathcal E_i') -1 + \sum_{i=2}^r {\rm tr.deg}_{\Q}(\mathcal E_i)
\\ & = &   \sum_{i=1}^r {\rm tr.deg}_{\Q}(\mathcal E_i)\, ,
\end{eqnarray*}
as wanted. This ends the proof. 
 \end{proof}
 
Theorem \ref{thm: purete2} is now a direct consequence of lemmas \ref{lem:puritysystementiers} and \ref{lem:transcendancedegree}.

\begin{proof}[Proof of Theorem \ref{thm: purete2}]
Note that the inclusion 
\begin{equation}\label{eq:inclusionindirect}
\sum_{i=1}^r {\rm Alg}_{\Q}(\mathcal E_i \mid \mathcal E) \subset  {\rm Alg}_{\Q}(\mathcal E)\, ,
\end{equation}
is trivial. It is thus enough to prove that $\sum_{i=1}^r {\rm Alg}_{\Q}(\mathcal E_i \mid \mathcal E)$ is a prime ideal whose height 
is larger than or equal to the one of ${\rm Alg}_{\Q}(\mathcal E)$. Given a prime ideal $\mathfrak p$ of a ring, we let ${\rm ht}(\mathfrak p)$ 
denote the height $\mathfrak p$, that is, the maximal length of a chain of prime ideal included in $\mathfrak p$. 

Set 
$$
\mathcal F_i := \{f_{i,1}(\balpha_i),\ldots,f_{i,m_i}(\balpha_i)\},\, 1 \leq i \leq r\, ,
$$
and $\mathcal F = \cup_i \mathcal F_i$. By Lemma \ref{lem:puritysystementiers}, we know that 
$$
{\rm Alg}_{\Q}(\mathcal F) = \sum_{i=1}^r {\rm Alg}_{\Q}(\mathcal F_i \mid \mathcal F)\, .
$$

We stress that
\begin{equation}
\label{eq:equalityheight}
{\rm ht}\left({\rm Alg}_{\Q}(\mathcal F)\right) =\sum_{i=1}^r {\rm ht}\left({\rm Alg}_{\Q}(\mathcal F_i)\right)\, ,
\end{equation}
where ${\rm Alg}_{\Q}(\mathcal F) \subset \Q[\X]$ and ${\rm Alg}_{\Q}(\mathcal F_i) \subset \Q[\X_i]$. Indeed, 
from Krull's height theorem, the height of the prime ideal ${\rm Alg}_{\Q}(\mathcal E) \subset \Q[\X]$ is equal to the size of a minimal set of generators of 
${\rm Alg}_{\Q}(\mathcal E)$ in the Noetherian ring $\Q[\X]$. 
For every $i$, $1 \leq i \leq r$, let $P_{i,1},\ldots,P_{i,h_i} \in \Q[\X_i]$ denote a minimal system of generators of ${\rm Alg}_{\Q}(\mathcal F_i)$ in $\Q[X_i]$. 
Hence ${\rm ht}\left({\rm Alg}_{\Q}(\mathcal F_i)\right) = h_i$. From lemma \ref{lem:puritysystementiers}, the family
$$
P_{1,1},\ldots,P_{1,h_1},P_{2,1},\ldots,P_{r,h_r}
$$
spans ${\rm Alg}_{\Q}(\mathcal F)$ in $\Q[\X]$, which gives that 
$$
{\rm ht}\left({\rm Alg}_{\Q}(\mathcal F)\right) \leq \sum_{i=1}^r {\rm ht}\left({\rm Alg}_{\Q}(\mathcal F_i)\right)\, .
$$
The converse inequality is trivial. 
Furthermore, the height of the ideal ${\rm Alg}_{\Q}(\mathcal F)$ satisfies
\begin{equation}
\label{eq:height_transdeg}
{\rm ht}\left({\rm Alg}_{\Q}(\mathcal F)\right) = m-{\rm tr.deg}_{\Q}(\mathcal F)\, .
\end{equation}
Equalities \eqref{eq:equalityheight} and \eqref{eq:height_transdeg} thus imply that 
\begin{equation}
\label{eq:transdeg}
{\rm trans.deg}(\mathcal F) = \sum_{i=1}^r {\rm trans.deg}(\mathcal F_i)\, .
\end{equation}
Going back to the sets $\mathcal E_i$, Lemma \ref{lem:transcendancedegree} now implies that 
\begin{equation}
\label{eq:egalitetranscdeg}
{\rm trans.deg}_{\Q}\left(\mathcal E \right) = \sum_{i=1}^r {\rm trans.deg}_{\Q}(\mathcal E_i)\, .
\end{equation}
It thus follows from \eqref{eq:height_transdeg} that 
$$
{\rm ht}\left({\rm Alg}_{\Q}(\mathcal E)\right) =\sum_{i=1}^r {\rm ht}\left({\rm Alg}_{\Q}(\mathcal E_i)\right)\, .
$$
Set $\mathcal I := \sum_{i=1}^r {\rm Alg}_{\Q}(\mathcal E_i \mid \mathcal E)$. Then the isomorphism 
$$
\frac{\Q[\X_1]}{{\rm Alg}_{\Q}(\mathcal E_1)} \otimes_{\Q} \cdots \otimes_{\Q} \frac{\Q[\X_r]}{{\rm Alg}_{\Q}(\mathcal E_r)} \simeq \frac{Q[\X]}{\mathcal I}\,  
$$
implies that $\mathcal I$ is a prime ideal. Indeed, the tensor product of integral domains, over an algebraically closed field, is an integral domain. 
Furthermore, ${\rm ht}(\mathcal I)=\sum_{i=1}^r{\rm ht}({\rm Alg}_{\Q}(\mathcal E_i \mid \mathcal E))$, since the dimension of the product of affine varieties 
is equal to the sum of the dimension of these varieties. 
It follows that ${\rm Alg}_{\Q}(\mathcal E)$ and $\sum_{i=1}^r {\rm Alg}_{\Q}(\mathcal E_i \mid \mathcal E)$ are both prime ideals with the same height. 
By \eqref{eq:inclusionindirect}, these two ideals are equal. This ends the proof.
\end{proof}

\section{Mahler functions in one variable}\label{sec:1variable} 

In this section, we consider Mahler functions in one variable. Our main aim is to prove Theorem \ref{thm:main} and 
Corollary \ref{coro:function}, but we start with a short discussion about few general principles governing 
 the study of these functions. The following three fundamental principles serve as a mantra for number theorists working in Mahler's method.  

\begin{itemize}

\item[{\rm (I)}] Transcendental $q$-Mahler functions  take transcendental values  at algebraic points.

\medskip

\item[{\rm (II)}] Algebraically independent $q$-Mahler functions take algebraically independent values at algebraic points. 

\medskip

\item[{\rm (III)}] Linearly independent $q$-Mahler functions take linearly independent values at algebraic points.

\end{itemize}

Of course, these must be taken with a pinch of salt. For instance, if $f(z)$ is a transcendental $q$-Mahler function, 
so is $g(z)=(z-1/2)f(z)$, and $g(1/2)=0$ is not a transcendental number.  Even more subtle counter-examples such as 
$$
f(z)=\prod_{n=0}^{\infty}(1-2z^{3^n})
$$ 
can be cooked up easily.  
However, these three principles can be rigorously established in the following sense.  
Let $r<1$ be a positive real number, then there exists a finite set $\mathcal E$ (depending on $r$ and the 
corresponding $q$-Mahler functions) such that Principles (I)--(III) are satisfied for all algebraic numbers $\alpha$, 
$0<\vert \alpha\vert\leq r$, that does not belong to $\mathcal E$. For Principles (I) this is a consequences of 
Nishioka's theorem, as observed by Becker \cite[Lemma 6]{Bec94}. In fact, his argument extends to show that Principles (II) is also a 
consequence of Nishioka's theorem (see Proposition \ref{prop:becker}).    
For Principle (III), the more recent works of Philippon \cite{PPH} 
and the authors \cite{AF1,AF2} are needed.  
Furthermore, the authors \cite{AF1,AF2}  show that the exceptional set $\mathcal E$ in 
Principles (I) and (III) can be effectively determined.  
In contrast, the following two additional principles, which do not fall under the scope of Mahler's method in one variable, 
have not yet been established. 

\begin{itemize}

\item[{\rm (IV)}]  Transcendental Mahler functions take algebraically independent values at multiplicatively independent algebraic points. 

\medskip

\item[{\rm (V)}] Transcendental Mahler functions associated with pairwise multiplicatively independent transformation take algebraically independent values 
at algebraic points. In particular, they are algebraically independent over $\Q(z)$. 
    
\end{itemize}

\noindent
Again, we can make them rigorous as follows. Let $r<1$ be a positive real number, then there exists a finite set $\mathcal E$ (depending on $r$ and the 
corresponding Mahler functions, say $f_1(z),\ldots,f_n(z)$) such that Principles IV and V are satisfied for all $n$-tuples 
of algebraic numbers $\alpha_1,\ldots,\alpha_n$, with $0<\vert \alpha_1\vert,\ldots, \vert\alpha_n\vert\leq r$  
and $\alpha_1,\ldots,\alpha_n\not\in\mathcal E$. We stress that Theorem \ref{thm:main} and Corollary \ref{coro:function} validate 
Principles (IV) and (V) in the case of regular singular Mahler functions.  

\begin{prop}\label{prop:becker} 
Let $f_1(z),\ldots,f_m(z)\in\Q\{z\}$ be analytic functions that converge on a connected open set $\mathcal U\subset \mathbb C$. 
Let $\mathcal A\subset \mathcal U$ be a set such that 
$$
{\rm tr.deg}_{\Q}(f_1(\alpha),\ldots,f_m(\alpha))= {\rm tr.deg}_{\Q(z)}(f_1(z),\ldots,f_m(z)) \, ,
$$
for all $\alpha\in \mathcal A$. 
If the functions $f_1(z),\ldots,f_\ell(z)$ are algebraically independent over $\Q(z)$, then the set 
$$
\mathcal E=\{\alpha\in \mathcal A \mid f_1(\alpha),\ldots,f_\ell(\alpha) \mbox{ are algebraically dependent over }\Q\}
$$
has finite intersection with any compact subset of $\mathcal U$.  
\end{prop}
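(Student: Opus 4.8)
\emph{Proof strategy.} The plan is to reduce the statement to the fact that a not-identically-zero analytic function on a connected open set has only isolated zeros. Set $d := {\rm tr.deg}_{\Q(z)}(f_1(z),\ldots,f_m(z))$. Since $f_1(z),\ldots,f_\ell(z)$ are algebraically independent over $\Q(z)$, we have $\ell\leq d$, and by the exchange property of algebraic dependence we may reindex $f_{\ell+1}(z),\ldots,f_m(z)$ so that $f_1(z),\ldots,f_d(z)$ is a transcendence basis of $\Q(z)(f_1(z),\ldots,f_m(z))$ over $\Q(z)$; this does not affect $\mathcal E$, which only involves $f_1,\ldots,f_\ell$. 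For each $j$ with $d<j\leq m$, the function $f_j(z)$ is then algebraic over $\Q(z)(f_1(z),\ldots,f_d(z))$; clearing denominators in its minimal polynomial over this field produces a nonzero polynomial $R_j\in\Q[z,X_1,\ldots,X_d,Y]$ of positive degree in $Y$ with $R_j(z,f_1(z),\ldots,f_d(z),f_j(z))=0$, whose leading coefficient with respect to $Y$ is some nonzero $D_j\in\Q[z,X_1,\ldots,X_d]$. Because $f_1(z),\ldots,f_d(z)$ are algebraically independent over $\Q(z)$, the function $g_j(z):=D_j(z,f_1(z),\ldots,f_d(z))$ is analytic on $\mathcal U$ and not identically zero.

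Next I would let $\mathcal Z:=\bigcup_{j=d+1}^m g_j^{-1}(0)$. Each $g_j^{-1}(0)$ is a discrete closed subset of the connected open set $\mathcal U$, hence so is the finite union $\mathcal Z$; in particular $\mathcal Z\cap K$ is finite for every compact $K\subset\mathcal U$. The heart of the argument is then to prove $\mathcal E\subseteq\mathcal Z$. Fix $\alpha\in\mathcal U\setminus\mathcal Z$. For every $j>d$ the polynomial $R_j(\alpha,f_1(\alpha),\ldots,f_d(\alpha),Y)$ is not the zero polynomial, since its leading coefficient $g_j(\alpha)$ is nonzero; as it vanishes at $Y=f_j(\alpha)$, the number $f_j(\alpha)$ is algebraic over $\Q(f_1(\alpha),\ldots,f_d(\alpha))$. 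Consequently ${\rm tr.deg}_{\Q}(f_1(\alpha),\ldots,f_m(\alpha))={\rm tr.deg}_{\Q}(f_1(\alpha),\ldots,f_d(\alpha))\leq d$. If in addition $\alpha\in\mathcal E$, then $f_1(\alpha),\ldots,f_\ell(\alpha)$ are algebraically dependent over $\Q$, so ${\rm tr.deg}_{\Q}(f_1(\alpha),\ldots,f_\ell(\alpha))\leq \ell-1$ and, by subadditivity of transcendence degree, ${\rm tr.deg}_{\Q}(f_1(\alpha),\ldots,f_d(\alpha))\leq(\ell-1)+(d-\ell)=d-1$; hence ${\rm tr.deg}_{\Q}(f_1(\alpha),\ldots,f_m(\alpha))\leq d-1$. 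This contradicts the hypothesis defining $\mathcal A$, for which the left-hand side equals $d$. Therefore $\mathcal E\cap(\mathcal U\setminus\mathcal Z)=\emptyset$, i.e. $\mathcal E\subseteq\mathcal Z$, and $\mathcal E\cap K\subseteq\mathcal Z\cap K$ is finite for every compact $K\subset\mathcal U$.

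The only mildly delicate points are, first, that one must extend the independent family $f_1,\ldots,f_\ell$ to a transcendence basis drawn from the given functions themselves, so that the ``extra'' functions $f_{d+1},\ldots,f_m$ genuinely satisfy algebraic equations over $\Q(z)(f_1,\ldots,f_d)$; and second, that the leading coefficients $D_j$ do not evaluate to the zero function -- this is precisely where the algebraic independence of $f_1,\ldots,f_d$ over $\Q(z)$ enters, and it is what confines the exceptional set to a \emph{discrete} set rather than merely a thin one. Everything else is transcendence-degree bookkeeping together with the identity theorem for analytic functions, so I expect no serious obstacle.
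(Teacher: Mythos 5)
Your argument is correct and is essentially the paper's own proof: extend $f_1,\ldots,f_\ell$ to a transcendence basis among the given functions, use the nonvanishing (by algebraic independence and the identity theorem) of a coefficient of the algebraic relation satisfied by each remaining $f_j$ to conclude that, outside a discrete set, every $f_j(\alpha)$ is algebraic over $\Q(f_1(\alpha),\ldots,f_d(\alpha))$, and then compare transcendence degrees using the defining property of $\mathcal A$. The only cosmetic differences are that you take the leading coefficient of the minimal polynomial and package the exceptional locus as a single discrete closed subset of $\mathcal U$, whereas the paper picks an arbitrary nonzero coefficient and works compact-by-compact.
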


\begin{proof} We follow the argument of Becker \cite[Lemma 6]{Bec94}. 
Let us assume that ${\rm tr.deg}_{\Q(z)}(f_1(z),\ldots,f_m(z))=r$. By assumption, $r\geq \ell$. 
Reordering if necessary, we can assume without loss of generality 
that $f_1(z),\ldots,f_r(z)$ are algebraically independent over $\Q(z)$. 
Let us consider an integer $j_0>r$. Then $f_{j_0}(z)$ is algebraic over the field 
$\Q(z)(f_1(z),\ldots,f_r(z))$ and there exists a non-trivial relation of the form 
$$
\sum_{i=0}^{d_{j_0}}A_{i,{j_0}}(z,f_1(z),\ldots,f_r(z)) f_{j_0}(z)^i=0 \, ,
$$
where the polynomials $A_{i,j_0}\in \Q[z,X_1,\ldots,X_r]$ are not all zero.  
Let $i_0$ be such that $A_{i_0,j_0}(z,X_1,\ldots,X_r)\not=0$.  
Since the function $f_1(z),\ldots,f_r(z)$ are algebraically independent over $\Q(z)$, we get that 
$A_{i_0,j_0}(z,f_1(z),\ldots,f_r(z))$ is a non-zero function that is analytic on $\mathcal U$. 
Now, let $\mathcal C$ denote a compact subset of $\mathcal U$. 
Then there exists a finite set $\mathcal E_{j_0}$ such that  
$A_{i_0,j_0}(\alpha,f_1(\alpha),\ldots,f_r(\alpha))\not=0$  for all $\alpha\in  \mathcal C\setminus \mathcal E_{j_0}$. For such $\alpha$, 
the number $f_{j_0}(\alpha)$ is algebraic over the field $\Q(f_1(\alpha,\ldots,f_r(\alpha))$. 
 Then for all 
$\alpha\in\mathcal C\setminus \cup_{j=r+1}^m\mathcal E_j$, we have that
$$
{\rm tr.deg}_{\Q}(f_1(\alpha),\ldots,f_m(\alpha))={\rm tr.deg}_{\Q}(f_1(\alpha),\ldots,f_r(\alpha))\,.
$$
By definition of $\mathcal A$, we deduce that ${\rm tr.deg}_{\Q}(f_1(\alpha),\ldots,f_r(\alpha))=r$ for all $\alpha\in\mathcal A\cap \mathcal C\setminus (\cup_{j=r+1}^m\mathcal E_j)$. 
In particular, $f_1(\alpha),\ldots,f_\ell(\alpha)$ 
are algebraically independent over 
$\Q$. It follows that $\mathcal E\cap \mathcal C\subset \cup_{j=r+1}^m\mathcal E_j$ is a finite set. This ends the proof. 
\end{proof}

\begin{proof}[Proof of Theorem \ref{thm:main}] 

Let $f(z)$ be a regular singular $q$-Mahler function and $\alpha$ be a non-zero algebraic number 
such that $f(z)$ is well-defined at $\alpha$. We are going to show that there exists a $q$-Mahler function $g(z)$ 
such that the following properties hold. 

\begin{itemize}
\item[{\rm (a)}]  $g(\alpha)=f(\alpha)$.

\medskip

\item[{\rm (b)}] The function $g(z)$ is the coordinate of a solution vector of regular singular Mahler system 
\begin{equation*}
\left(\begin{array}{c} g_1(z)=g(z) \\ \vdots \\ g_{m}(z) \end{array}\right)
=B(z)\left(\begin{array}{c} g_{1}(z^q) \\ \vdots \\ g_{m}(z^q) \end{array}\right)\, .
\end{equation*}

\medskip

\item[{\rm (c)}]  The point $\alpha$ is regular with respect to this system. 

\end{itemize}

By assumption, the function $f(z)$ is the coordinate of a regular singular Mahler system, say 
\begin{equation}
\label{eq:Mahlerunevariable}
\left(\begin{array}{c} f_{1}(z) \\ \vdots \\ f_{m}(z) \end{array}\right)
=A(z)\left(\begin{array}{c} f_{1}(z^q) \\ \vdots \\ f_{m}(z^q) \end{array}\right)\, .
\end{equation}
Up to a reordering of the index, we assume without loss of generality that $f_1(z)=f(z)$, $f_1(z)\ldots,f_r(z)$ are linearly independent over $\Q(z)$, 
and that the functions $f_{r+1}(z),\ldots,f_m(z)$ belong to ${\rm Vect}_{\Q(z)} \{f_1(z),\ldots,f_r(z)\}$. 
Applying a rational gauge transform to \eqref{eq:Mahlerunevariable}, 
we obtain a new Mahler system
\begin{equation}
\label{eq:Mahleruneconjugué}
\left(\begin{array}{c}  f_{1}(z) \\ \vdots \\ f_{r}(z) \\ 0 \\ \vdots \\0 \end{array}\right)
=\left(\begin{array}{ccc|ccc} &&&&&
\\ & A_1(z) & & & A_2(z) &
\\ &&&&&
\\ \hline & &&&&
\\ & A_3(z) &&& A_4(z) &
\\ &&&&& \end{array}\right)
\left(\begin{array}{c}  f_{1}(z^q) \\ \vdots \\ f_{r}(z^q) \\ 0 \\ \vdots \\0  \end{array}\right)\, .
\end{equation}
Since the functions $f_1(z),\ldots,f_r(z)$ are linearly independent over $\Q(z)$, so are $f_1(z^q),\ldots,f_r(z^q)$. 
Hence, $A_3(z)$ is a zero matrix. The system \eqref{eq:Mahleruneconjugué} remains regular singular for  
any rational gauge transform preserves this property.    
Thus, there exist an invertible matrix $\Phi(z)$ with coefficients in $\widehat{\bK}=\cup_{d\geq 1}\Q\{z^{1/d}\}$, 
and a constant matrix $B$  such that
$$
B=\Phi(z)\left(\begin{array}{c|c} 
 A_1(z) & A_2(z) 
\\ \hline 0 & A_4(z)  \end{array}\right) \Phi(z^q)^{-1}\, .
$$
Up to a constant gauge transform, we can assume that $B$ is a lower triangular matrice. Hence, we 
obtain that 
$$
\left(\begin{array}{c|c} 
 B_1 & 0
\\ \hline B_3 & B_4  \end{array}\right)
\left(\begin{array}{c|c} 
   \Phi_1(z^q) & \Phi_2(z^q) 
\\ \hline \Phi_3(z^q) & \Phi_4(z^q)  \end{array}\right)
=
\left(\begin{array}{c|c} 
  \Phi_1(z) & \Phi_2(z) 
\\ \hline \Phi_3(z) & \Phi_4(z)  \end{array}\right)
\left(\begin{array}{c|c} 
 A_1(z) & A_2(z) 
\\ \hline 0 & A_4(z)  \end{array}\right) \,.
$$
Identifying the left upper squares, we get that 
$$
B_1=\Phi_1(z)A_1(z)\Phi_1(z^q)^{-1}\, .
$$
Hence, the system
\begin{equation}
\label{eq:Mahleruneconjuguéreduit}
\left(\begin{array}{c}  f_{1}(z) \\ \vdots \\ f_{r}(z) \end{array}\right)
=A_1(z)
\left(\begin{array}{c} f_{1}(z^q) \\ \vdots \\ f_{r}(z^q) \end{array}\right)\, .
\end{equation}
is regular singular. 
Since the functions $f_1(z),\ldots,f_r(z)$ are linearly independent, we infer from \cite[Theorem 1.10]{AF1} 
that there exists an integer $l$ 
such that the numbers $\alpha_1^{q^l}$ is regular for the system
\begin{equation}
\label{eq:Mahleriteree}
\left(\begin{array}{c} f_{1}(z) \\ \vdots \\ f_{r}(z) \end{array}\right)
=A_1^{(l)}(z)
\left(\begin{array}{c}f_{1}(z^{q^l}) \\ \vdots \\ f_{r}(z^{q^l}) \end{array}\right)\, ,
\end{equation}
where
$$
A_1^{(l)}(z)=A_1(z)A_1(z^q)\cdots A_1(z^{q^{l-1}})\, ,
$$
and such that $\alpha$ is not a pole of $A_1^{(l)}(z)$. 
The definition of $A_1^{(l)}$ ensures that this new system remains regular singular. 
Let $(a_1(z),\ldots,a_{r}(z))$ denote the first row of $A^{(l)}_1(z)$. 
Set  
$$
g(z)= a_{1}(\alpha)f_1(z^{q^l})+\cdots+ a_{r}(\alpha)f_r(z^{q^{l}})\, . 
$$
Then $g(z)$ is a constant linear combination of the functions $f_1(z^{q^l}),\ldots,f_r(z^{q^{l}})$. 
Since the point $\alpha^{q^l}$ is regular with respect to the system \eqref{eq:Mahleriteree}, 
there exists a constant gauge transform of \eqref{eq:Mahleriteree} turning $g(z)$ into the first coordinate 
of a regular singular Mahler system 
with respect to which $\alpha$ is a regular point. Furthermore, we infer from \eqref{eq:Mahleriteree} 
that $g(\alpha)=f(\alpha)$, as wanted. 
This ends the first part of the proof.

Let us prove the case (i) of Theorem \ref{thm:main}.  We assume that, for every $i$, $1\leq i\leq r$, the number $f_i(\alpha_i)$ is transcendental. 
With each pair $(f_i(z),\alpha_i)$, we can associate a $q_i$-Mahler function $g_i(z)$ satisfying Conditions (a), (b), and (c).  
Let us divide the natural numbers $1,\ldots,r$ into $s$ classes $\mathcal I_1,\ldots,\mathcal I_s$ so that 
if  $i$ and $j$ belong to two different classes then $q_i$ and $q_j$ are multiplicatively independent. Iterating the systems associated with the functions $g_i$ 
a suitable number of times, we can assume without loss of generality that $q_i=q_j$ when $i$ and $j$ belong to the same class.  
Set 
\begin{eqnarray*}
\mathcal E &=& \{g_1(\alpha_1),\ldots,g_r(\alpha_r)\} \, \\
&=&  \{f_1(\alpha_1),\ldots,f_r(\alpha_r)\}
\end{eqnarray*}
and $\mathcal E_i=\{g_j(\alpha_j)\mid j\in \mathcal I_i\}$.   
Then the first purity theorem implies that 
\begin{equation}\label{eq:Es}
{\rm Alg}_{\Q}(\mathcal E) = \sum_{i=1}^s {\rm Alg}_{\Q}(\mathcal E_i \mid \mathcal E)\, .
\end{equation}
Now, let us fix $i\in\{1,\ldots,s\}$. We have $\mathcal E_i=\{g_{i_1}(\alpha_{i_1}),\ldots,g_{i_t}(\alpha_{i_t})\}$ for some distinct integers 
$ i_1,\ldots,i_t$, $1\leq i_1,\ldots,i_t\leq r$.  
Set $T = q_i{\rm I}_t$, where we let ${\rm I}_t$ denote the identity matrix of size $t$. 
By assumption, the numbers $\alpha_{i_1},\ldots,\alpha_{i_t}$ are multiplicatively independent. This is equivalent to the $T$-independence of the $t$-tuple 
$(\alpha_{i_1},\ldots,\alpha_{i_t})$. 
For every $j$, $1\leq j \leq t$, we set $\mathcal E_{i,j}=\{g_{i_j}(\alpha_{i_j})\}$. 
Now, we can  apply the second purity theorem. We obtain that  
\begin{equation}\label{eq:Eij}
{\rm Alg}_{\Q}(\mathcal E_i) = \sum_{j=1}^t {\rm Alg}_{\Q}(\mathcal E_{i,j} \mid \mathcal E_i)\, .
\end{equation}
By assumption, ${\rm Alg}_{\Q}(\mathcal E_{i,j}\mid \mathcal E_i)=0$ for all $j$, $1\leq j\leq t$, since 
$f_{i_j}(\alpha_{i_j})=g_{i_j}(\alpha_{i,j})$ is transcendental. Then it follows from \eqref{eq:Eij} that ${\rm Alg}_{\Q}(\mathcal E_i)=0$. 
Finally, \eqref{eq:Es} implies that ${\rm Alg}_{\Q}(\mathcal E)=\{0\}$. 
In other words, the numbers $f_1(\alpha_1),\ldots,f_r(\alpha_r)$ are algebraically independent, as wanted.  

Now, we prove the case (ii) of Theorem \ref{thm:main}. 
We assume that, for every $i$, $1\leq i\leq r$, the number $f_i(\alpha_i)$ is transcendental. 
As previously, we associate with each pair $(f_i(z),\alpha_i)$ a function $g_i(z)$ satisfying Conditions (a), (b), and (c).  
Since the natural numbers $q_i$ are pairwise multiplicatively independent, we can apply the first purity theorem to these systems. 
Indeed, each system 
is associated here with a one-dimensional matrix transformation $T_i=(q_i)$ and the spectral radius of such matrix is just $q_i$.   
Setting 
$$\mathcal E = \{g_1(\alpha_1),\ldots,g_r(\alpha_r)\} \, 
$$ 
and $\mathcal E_i=\{g_i(\alpha_i)\}$, for every $i$, $1\leq i\leq r$, we deduce that 
$$
{\rm Alg}_{\Q}(\mathcal E) = \sum_{i=1}^r {\rm Alg}_{\Q}(\mathcal E_i \mid \mathcal E)\, .
$$
Again, ${\rm Alg}_{\Q}(\mathcal E_i \mid \mathcal E)=0$ for every $i$, $1\leq i\leq r$, since 
by assumption $f_i(\alpha_i)$ is transcendental. 
This shows that ${\rm Alg}_{\Q}(\mathcal E) =\{0\}$ and 
we conclude as previously that the numbers $f_1(\alpha_1),\ldots,f_r(\alpha_r)$ are algebraically independent.  
This ends the proof. 
\end{proof}

Now, we prove Corollary $\ref{coro:function}$. 

\begin{proof}[Proof of corollary \ref{coro:function}]
By Theorem \ref{thm:main}, we just have to prove that there exists an algebraic number $\alpha$, $0<\vert \alpha\vert<1$, 
such that the functions $f_i(z)$ are all well-defined and 
transcendental at $\alpha$.  Indeed, choosing $\alpha_1=\cdots =\alpha_r=\alpha$, 
Theorem \ref{thm:main} implies that the numbers $f_1(\alpha),\ldots,f_r(\alpha)$ 
are algebraically independent. Hence, the functions $f_1(z),\ldots,f_r(z)$ are algebraically independent.   
Let $i\in\{1,\ldots,r\}$. 
Let $\rho<1$ be a positive real number and let $B(0,\rho)$ denote the close complex disc of radius $\rho$. 
Becker \cite[Lemma 6]{Bec94} deduced from Nishioka's theorem that there are only finitely many points $\alpha \in B(0,\rho)$ 
such that $f_i(\alpha)$ is algebraic. Furthermore, the function $f_i(z)$ have only a finite number of poles in $B(0,\rho)$.  
So, for all but finitely many algebraic numbers $\alpha$ in $B(0,r)$,  all the functions $f_i(z)$ are well-defined at $\alpha$ and $f_i(\alpha)$ is transcendental. 
This ends the proof. 
\end{proof}

\section{Mahler functions in several variables} \label{sec:several}

This section is devoted to Mahler functions in several variables. 
We first define the notion of a regular singular $T$-Mahler function, after which 
we extend Theorem \ref{thm:main} and Corollary \ref{coro:function} to this setting. 

\begin{defi}\label{defi:regular_singular} 
Let $\z=(z_1,\ldots,z_n)$ be a vector of indeterminates and let $T$ be $n\times n$ matrix with non-negative integer 
coefficients. A function $f(\z)\in\Q\{\z\}$ is a $T$-Mahler function if there exist polynomials 
$p_0(\z),\ldots , p_n(\z)\in \overline{\mathbb Q}[\z]$, not all zero,  
such that  
\begin{equation*}
 p_0(\z)f(z)+p_1(\z)f(T\z)+\cdots + p_n(\z)f(T^n\z) \ = \ 0 \,
 \end{equation*}  
 or, equivalently, if there exists a linear $T$-Mahler system 
 \begin{equation*}
\left(\begin{array}{c} f_{1}(\z) \\ \vdots \\ f_{m}(\z) \end{array}\right)
= A(\z)\left(\begin{array}{c} f_{1}(T\z) \\ \vdots \\ f_{m}(T\z) \end{array}\right)\, 
\end{equation*}
with $f(\z)=f_{1}(\z)$ and $A(\z) \in {\rm Gl}_{m}(\Q(\z))$. 
A $T$-Mahler function is 
said to be \emph{regular singular} if it is the coordinate of a vector representing a solution 
to a regular singular $T$-Mahler system.
\end{defi}

\begin{rem}\label{rem:ers}
Note that if $f(\z)$ is a regular singular $T$-Mahler function, then every system having a solution 
represented by a vector of analytic functions 
containing $f(\z)$ and whose coordinates are linearly independent over $\Q(\z)$ is also regular singular. 
A sketch of proof of this fact is given in the proof of Theorem \ref{thm:main}.
\end{rem}

We first state without proof a multidimensional analogue of Theorem \ref{thm:main}.
It can be proved, exactly in the same way as Theorem \ref{thm:main}, by combining the two purity theorems. 
Given a matrix $T$, we let $\rho(T)$ denote its spectral radius.  

\begin{thm}
\label{coro:ind_alg_1_multi_function}
Let $r\geq 2$ be an integer. For every integer $i$, $1\leq  i\leq r$, we let $\z_i=(z_{i,1},\ldots,z_{i,n_i})$ 
be a vector of indeterminates, 
$T_i$ be $n_i\times n_i$ matrix that belongs to $\mathcal M$, 
$f_i(\z_i)\in \Q\{\z_i\}$ be a regular singular $T_i$-Mahler function, and 
$\balpha_i=(\alpha_{i,1},\ldots,\alpha_{i,n_i})\in \Q^{n_i}$ be 
such that  the pair $(T_i,\balpha_i)$ is admissible and $\balpha_i$ is regular with respect to the 
underlying regular singular $T_i$-Mahler system.  
Then the following properties hold. 

\begin{itemize}

\item[{\rm (i)}] Let us assume that the point $(\balpha_1,\ldots, \balpha_r)$ is  $T$-independent, 
where $T={\rm diag}(T_1,\ldots,T_r)$. Then $f_1(\balpha_1),f_2(\balpha_2),\ldots,f_r(\balpha_r)$ are 
algebraically independent over $\Q$ if and only if they are all transcendental.  

\medskip

\item[{\rm (ii)}] Let us assume that the numbers $\rho(T_1),\ldots, \rho(T_r)$ are pairwise multiplicatively independent. 
Then $f_1(\balpha_1),f_2(\balpha_2),\ldots,f_r(\balpha_r)$ are algebraically independent over $\Q$ if and only if 
they are all transcendental.  
\end{itemize}
\end{thm}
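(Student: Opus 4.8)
The plan is to mirror, almost verbatim, the proof of Theorem \ref{thm:main}, replacing the one-variable normalization step (passage from a $q$-Mahler function to a function $g(z)$ attached to a regular singular system for which the evaluation point is regular) by its multivariate analogue, and then invoking the two purity theorems exactly as before. The only genuinely new ingredient is the multivariate normalization lemma, whose statement is: given a regular singular $T$-Mahler function $f(\z)$ and a point $\balpha$ with $(T,\balpha)$ admissible, there is a $T$-Mahler function $g(\z)$, obtained as a constant linear combination of the coordinates $f_i(T^l\z)$ of an iterate of an underlying system, such that $g(\balpha)=f(\balpha)$, $g(\z)$ is the first coordinate of a regular singular $T$-Mahler system, and $\balpha$ is a regular point for that system. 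This is what Remark \ref{rem:ers} alludes to, and it is proved in precisely the same way as the first part of the proof of Theorem \ref{thm:main}: reduce to a subsystem whose coordinates are linearly independent over $\Q(\z)$ (the off-diagonal block $A_3$ vanishes as in \eqref{eq:Mahleruneconjugué}, because a linearly independent family stays linearly independent after applying $\z\mapsto T\z$ when $T\in\M$), check that this subsystem is still regular singular by the triangular-conjugation argument, then iterate the system enough times using the multivariate regularity result quoted from \cite{AF1} so that $\balpha$ avoids the poles of the iterated matrix and becomes regular, and finally perform a constant gauge transform to place $g$ in the first coordinate.

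Granting this normalization lemma, the rest is a direct transcription. For part (i): associate to each $(f_i,\balpha_i)$ a function $g_i(\z_i)$ as above; partition $\{1,\dots,r\}$ into classes on which the spectral radii $\rho(T_i)$ are multiplicatively dependent, and within each class replace the systems by common iterates so that all $T_i$ in a class have the same spectral radius $\rho$ (raising $T_i$ to a power multiplies its spectral radius by the corresponding power and, since $T_i\in\M$, keeps it in $\M$; admissibility and regularity are preserved under iteration). Apply the first purity theorem \cite[Theorem 2.4]{AF3} across the classes (their spectral radii are pairwise multiplicatively independent by construction), giving $\mathrm{Alg}_\Q(\mathcal E)=\sum_s \mathrm{Alg}_\Q(\mathcal E_s\mid\mathcal E)$. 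Within a class, apply the second purity theorem (Theorem \ref{thm: purete2}): the hypothesis that $(\balpha_1,\dots,\balpha_r)$ is $T$-independent passes to any sub-block (a sub-tuple of a $T$-independent tuple is $T'$-independent for the corresponding sub-block $T'$, and iterating $T$ does not destroy $T$-independence), so $\mathrm{Alg}_\Q(\mathcal E_s)=\sum_{i\in\text{class }s}\mathrm{Alg}_\Q(\mathcal E_{s,i}\mid\mathcal E_s)$; since each $f_i(\balpha_i)=g_i(\balpha_i)$ is transcendental, each $\mathrm{Alg}_\Q(\mathcal E_{s,i}\mid\mathcal E_s)=0$, whence $\mathrm{Alg}_\Q(\mathcal E)=\{0\}$, i.e. algebraic independence; the converse implication is trivial.

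For part (ii) the argument is shorter: after the normalization step, the spectral radii $\rho(T_1),\dots,\rho(T_r)$ are pairwise multiplicatively independent by hypothesis, so one applies the first purity theorem directly to the block-diagonal system built from the $g_i$, obtaining $\mathrm{Alg}_\Q(\mathcal E)=\sum_i \mathrm{Alg}_\Q(\mathcal E_i\mid\mathcal E)$ with $\mathcal E_i=\{g_i(\balpha_i)\}$; transcendence of each $g_i(\balpha_i)=f_i(\balpha_i)$ forces every summand to vanish, giving the claim.

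The main obstacle is the normalization lemma, and specifically making sure each ingredient of the one-variable proof of Theorem \ref{thm:main} really does survive the passage to several variables: that the Birkhoff–Guenther/regular-singular conjugation producing a constant matrix $B$ is available over $\widehat{\bK}=\cup_{d\ge1}\Q\{\z^{1/d}\}$ in the multivariate setting (this is part of the regular-singular formalism of Part I), that the iteration result \cite[Theorem 1.10]{AF1} moving $\balpha$ to a regular point has the multivariate version stated (it does, under the admissibility hypothesis on $(T_i,\balpha_i)$), and that the permanence of $T$-independence and of admissibility under taking sub-blocks and under iteration of $T$ are correctly recorded — each of these is routine but must be cited precisely. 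Once these book-keeping points are settled, the deduction from the two purity theorems is identical to that of Theorem \ref{thm:main}, which is why it is legitimate to state the result without a fully written-out proof.
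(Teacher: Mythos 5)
Your proposal is correct and follows exactly the route the paper intends: the paper states this theorem without proof, remarking only that it ``can be proved, exactly in the same way as Theorem \ref{thm:main}, by combining the two purity theorems,'' and your class-by-spectral-radius decomposition followed by the first purity theorem across classes and the second within each class is precisely that argument. The one thing worth noting is that the normalization lemma you single out as the main obstacle is not actually needed here: unlike Theorem \ref{thm:main}, where $\alpha_i$ is only assumed to be a point where $f_i$ is well defined, the present statement already hypothesizes that $(T_i,\balpha_i)$ is admissible and that $\balpha_i$ is regular for the underlying system, so the passage from $f_i$ to a normalized $g_i$ (and with it your worry about a multivariate analogue of \cite[Theorem 1.10]{AF1}) can be skipped entirely, leaving only the routine permanence of admissibility and $T$-independence under iteration and restriction to sub-blocks.
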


\begin{rem}
We stress that if all the algebraic numbers $\alpha_{i,j}$, $1\leq i\leq r$, $1\leq j\leq n_i$, are multiplicatively 
independent, then the point $(\balpha_1,\ldots, \balpha_r)$ is $T$-independent for any choice of the matrices $T_i$.  
\end{rem}

Though Mahler method in several variables is mostly used to deal with analytic functions in one variable, 
some Mahler functions in several variables have their own interest.  
This is the case of the generating functions of multidimensional automatic sequences (see \cite[Chapter 14]{AS} for a definition). 
More precisely, if ${\bf a}=(a(n_1,\ldots,n_d))_{(n_1,\ldots,n_d)\in\mathbb N^d}$ is a $d$-dimensional  $q$-automatic sequence with values in $\Q$, 
then the generating function  
$$
f_{\bf a}(z_1,\ldots,z_d)=\sum_{(n_1,\ldots,n_d)\in\mathbb N^d}a(n_1,\ldots,n_d)z_1^{n_1}\cdots z_d^{n_d}
$$
is a $T$-Mahler function with 
$$
T=\left(\begin{array}{rcl}
q& & \\
&\ddots&\\
&& q
\end{array} \right)\,.
$$
Let us illustrate this fact with a simple example. 

\begin{ex}
The two-dimensional Sierpinski sequence ${\bf s}=s(n_1,n_2)$ is defined by $s(n_1,n_2)=1$ if the natural numbers 
$n_1$ and $n_2$ have no $1$ at the same position in their ternary expansion, and by $s(n_1,n_2)=0$ otherwise. 
The name of this sequence comes from the fact that 
replacing $1$'s with black squares and $0$'s with white squares, and suitably renormalizing, the graphic representation of $\bf s$ 
converges (for the Hausdorff topology) to the Sierpinski carpet. More generally, many classical fractals can be obtained by a similar process 
using multidimensional automatic sequence (see for instance \cite{AS,ABe}). 
Figure \ref{Fig:Sierpisnki_Carpet}  provides a finite automaton generating the sequence $\bf s$.  
It takes as input a pair of natural numbers $(n_1,n_2)$ written in base $3$ and then padding, if necessary, the expansion of $n_1$ or $n_2$ at the beginning with $0$'s 
to ensure that both expansions have the same length. 

\label{ex:Sierpinski}
\begin{figure}[h]
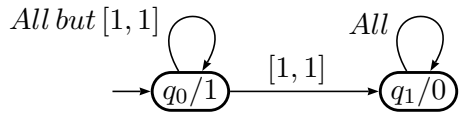

\centering
\VCDraw{%
        \begin{VCPicture}{(-2.5,-1)(2.5,2)}
 \StateVar[q_0/1]{(-2.5,0)}{a}  \StateVar[q_1/0]{(2.5,0)}{b}
\Initial[w]{a}
\LoopN{a}{All\, but\, [1,1]}
\LoopN{b}{All}
\EdgeL{a}{b}{[1,1]}
\end{VCPicture}
}
 \caption{The Sierpinski sequence Automaton}\label{Fig:Sierpisnki_Carpet}
\end{figure}

\medskip

The generating function $f_{\bf s}(z_1,z_2)=\sum_{n_1,n_2} s(n_1,n_2) z_1^{n_1} z_2^{n_2}$ is a 
$3{\rm I}_2$-Mahler function,  where we let denote by ${\rm I}_2$ denote the $2\times 2$ identity matrix. 
Indeed, it 
satisfies the regular singular equation
\begin{equation}
\label{eq:Sierpinski}
s(z_1,z_2)=(1+z_1+z_2+z_1^2+z_2^2+z_1^2z_2+z_1z_2^2+z_1^2z_2^2)s(z_1^3,z_2^3) \, .
\end{equation}
Set $a(z_1,z_2)=(1+z_1+z_2+z_1^2+z_2^2+z_1^2z_2+z_1z_2^2+z_1^2z_2^2)$. 
A point $\balpha=(\alpha_1,\alpha_2)\in(\Q^\star)2$ is $3{\rm I}_2$-independent if and only if $\alpha_1$ and $\alpha_2$ 
are multiplicatively independent. Furthermore, if $\alpha_1$ and $\alpha_2$ 
are multiplicatively dependent, the number $s(\alpha_1,\alpha_2)$ is the value of a one-dimensional $3$-Mahler function,
 obtained by specializing Equation \ref{eq:Sierpinski}. In the end, we can prove that outside the  Zariski closed
set $\{a(z_1,z_2)=0\}$, $s(\alpha_1,\alpha_2)$ is transcendental for all pair of algebraic numbers with $0 < |\alpha_1|,|\alpha_2|<1$. 
We can also use Theorem \ref{coro:ind_alg_1_multi_function} to prove for instance that $s(1/2,1/3)$ and $s(1/5,1/7)$ 
are algebraically independent over $\Q$.
\end{ex}

We recall that Semenov \cite{Se} obtained an interesting generalization of Cobham's theorem for $d$-dimensional automatic sets. 
In this direction, we extend Corollary \ref{coro:function} to Mahler functions in several variables.

\begin{thm}
\label{th:alg_ind_multivariable}
Let $n$ be a positive integer, and let $T_1,\ldots,T_r$ be $n\times n$ matrices in $\mathcal M$ such that $\rho(T_1),\ldots,\rho(T_r)$ 
are pairwise multiplicatively independent. For every $i$, $1\leq i\leq r$, let $f_i(\z)$ be a regular singular $T_i$-Mahler function that is  
not a rational function. Then  $f_1(\z),\ldots,f_r(\z)$ are algebraically independent over $\Q(\z)$.
\end{thm}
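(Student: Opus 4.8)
The plan is to deduce the statement from part~(ii) of Theorem~\ref{coro:ind_alg_1_multi_function}, following the scheme by which Corollary~\ref{coro:function} was derived from Theorem~\ref{thm:main}. Since all the $T_i$ act on the same number $n$ of variables, it suffices to produce a \emph{single} algebraic point $\balpha=(\alpha_1,\ldots,\alpha_n)$, lying in a sufficiently small polydisc about the origin and with non-zero coordinates, such that for every $i$ the pair $(T_i,\balpha)$ is admissible, $\balpha$ is regular with respect to some regular singular $T_i$-Mahler system of which $f_i(\z)$ is a coordinate, and $f_i(\balpha)$ is transcendental over $\Q$. Indeed, taking $\balpha_1=\cdots=\balpha_r=\balpha$ in Theorem~\ref{coro:ind_alg_1_multi_function}(ii) — legitimate because $\rho(T_1),\ldots,\rho(T_r)$ are pairwise multiplicatively independent — gives that $f_1(\balpha),\ldots,f_r(\balpha)$ are algebraically independent over $\Q$, and I will then argue that this forces $f_1(\z),\ldots,f_r(\z)$ to be algebraically independent over $\Q(\z)$.

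First I would note that each $f_i(\z)$ is transcendental over $\Q(\z)$: this is where the hypothesis that $f_i$ is not a rational function is used, via the fact that a non-rational Mahler function is transcendental over the field of rational functions (in one variable, \cite[Theorem~5.1.7]{Ni_Liv}; the analogue for $T$-Mahler functions with $T\in\mathcal M$ holds as well). Next I would gather the genericity inputs. By the description of the class $\mathcal M$ in Part~I \cite{AF3} together with \cite[Theorem~1.10]{AF1} — and, if necessary, after replacing each $T_i$ by a power $T_i^{\,l_i}$, which changes nothing relevant since $\rho(T_i^{\,l_i})=\rho(T_i)^{l_i}$, the resulting spectral radii stay pairwise multiplicatively independent, $T_i^{\,l_i}\in\mathcal M$, and $f_i(\z)$ remains a coordinate of a regular singular $T_i^{\,l_i}$-Mahler system — the set of algebraic points $\balpha$ in a small polydisc $D$ about $0$ for which $(T_i,\balpha)$ is not admissible or $\balpha$ is not regular is contained in a proper Zariski-closed subset of $D$. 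Moreover, by the lifting theorem of Part~I, at every admissible regular algebraic point the transcendence degree over $\Q$ of the values of the solution vector equals the transcendence degree over $\Q(\z)$ of that vector of functions; hence, running Becker's argument \cite[Lemma~6]{Bec94} exactly as in the proof of Proposition~\ref{prop:becker} but over an open subset of $\mathbb C^n$, the additional points $\balpha$ at which $f_i(\balpha)$ is algebraic over $\Q$ lie in the zero locus of a non-zero element of $\Q\{\z\}$.

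I would then assemble these obstructions. The union over $i=1,\ldots,r$ of the above proper Zariski-closed sets and zero loci of non-zero analytic functions is contained in $\{h=0\}$ for a single non-zero analytic function $h$ on $D$ (take a product). A non-zero analytic function on $D$ cannot vanish at every algebraic point of $D$: restricting it to a line $t\mapsto t\beta$ through $0$ with $\beta$ algebraic, it would vanish for all small rational $t$, hence vanish identically on that line, and letting $\beta$ vary one gets $h\equiv 0$. So there is an algebraic $\balpha\in D$ off $\{h=0\}$, and Theorem~\ref{coro:ind_alg_1_multi_function}(ii) then yields that $f_1(\balpha),\ldots,f_r(\balpha)$ are algebraically independent over $\Q$. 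Finally, if $f_1(\z),\ldots,f_r(\z)$ were algebraically dependent over $\Q(\z)$, the kernel $J$ of the $\Q[\z]$-algebra homomorphism $\Q[\z][X_1,\ldots,X_r]\to\Q\{\z\}$ sending $X_i$ to $f_i(\z)$ would be a non-zero prime ideal, whence the ideal $J_0\subseteq\Q[\z]$ generated by all the coefficients (viewed in the $X_i$) of all elements of $J$ would be non-zero; after also requiring $\balpha$ to lie off the proper Zariski-closed set $V(J_0)$ — fold it into $h$ — we would obtain some $P\in J$ with $P(\balpha,X_1,\ldots,X_r)$ a non-zero polynomial and $P(\balpha,f_1(\balpha),\ldots,f_r(\balpha))=0$, contradicting the algebraic independence of the values. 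This completes the proof.

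The crux is the several-variable version of Becker's argument in the second paragraph: one needs Nishioka-type transcendence in several variables — supplied by the lifting theorem of Part~I \cite{AF3} — and must check that Becker's deduction transfers with $\mathbb C$ replaced by $\mathbb C^n$, the one genuine change being that the exceptional locus is now a proper analytic subset instead of a finite set. The other feature with no one-variable counterpart is the last step: a primitive polynomial relation over $\Q(\z)$ can degenerate at special algebraic points, so one must really know that a good point can be chosen off $V(J_0)$, which is precisely what the non-vanishing of $h$ at algebraic points delivers.
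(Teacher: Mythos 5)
Your overall architecture matches the paper's: find one algebraic point at which all the hypotheses of the multivariate independence theorem hold simultaneously and at which all the values are transcendental, then descend from independence of values to independence of functions. But the proof breaks at the key step, namely your claim that ``the set of algebraic points $\balpha$ in a small polydisc $D$ about $0$ for which $(T_i,\balpha)$ is not admissible or $\balpha$ is not regular is contained in a proper Zariski-closed subset of $D$.'' Neither condition has this form. Admissibility of $(T_i,\balpha)$ is obtained in the paper from the $T_i$-independence of $\balpha$, which in turn requires the coordinates of $\balpha$ to be multiplicatively independent (and, in the end, to avoid finitely many further multiplicative relations coming from the Corvaja--Zannier obstruction); the locus of multiplicatively dependent tuples is a countable, Zariski-dense union of torsion cosets, not a proper closed set, and your line-restriction trick for escaping $\{h=0\}$ produces points $t\beta$ with no control whatsoever on the multiplicative independence of their coordinates. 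Worse, regularity of $\balpha$ is not a condition at $\balpha$ alone: it demands $\delta(T_i^k\balpha)\neq 0$ for \emph{every} $k\geq 1$, where $\delta$ collects the determinants and denominators of the $A_i$. The bad set is therefore the countable union $\bigcup_{k}T_i^{-k}\{\delta=0\}$ of hypersurfaces, which again is not contained in any proper Zariski-closed subset. The one-variable statement \cite[Theorem~1.10]{AF1} that you invoke does not transfer: in one variable the singular locus inside a small disc is finite and the orbit escapes it, whereas in several variables $\{\delta=0\}$ can pass through the origin and be met by the orbit $T_i^k\balpha$ infinitely often. Controlling this is precisely the content of Lemma~\ref{lem:existenceitereregulier}, whose proof rests on the Corvaja--Zannier theorem on $S$-unit points on analytic hypersurfaces \cite{CZ05}: it does not say the bad points form a thin set, but rather produces, for a point $\balpha$ with multiplicatively independent coordinates also independent from finitely many auxiliary numbers $\beta_1,\ldots,\beta_t$, a monomial shift $S$ such that the whole forward orbit of $\bbeta=S\balpha$ avoids $\{\psi=0\}$. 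Your proposal has no substitute for this input, and without it no admissible regular point is exhibited, so Theorem~\ref{coro:ind_alg_1_multi_function}(ii) cannot be applied.

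Two smaller remarks. First, your use of a several-variable Becker argument to locate the points with algebraic values is workable in spirit, but the paper's Lemma~\ref{lem:densitetranscendence} does this more directly (via the lifting theorem and the vanishing of the maximal minors of the matrix of linear relations), and crucially it only asserts containment in a Zariski-closed set \emph{among the admissible regular points} --- so it cannot be run before the admissibility and regularity issues are settled. Second, your final paragraph correctly identifies that in several variables a polynomial relation over $\Q(\z)$ can degenerate entirely at a given algebraic point (unlike in one variable, where one divides out a power of $z-\alpha$), and your $V(J_0)$ device is a legitimate fix for that; this is a point the paper's own proof passes over in silence. But this refinement does not repair the main gap above.
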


\begin{rem}
The case $r=1$ gives that a regular singular $T$-Mahler function is either rational or transcendental, providing 
that $T$ belongs to $\mathcal M$.
\end{rem}

We are now going to prove Theorem \ref{th:alg_ind_multivariable}.

\begin{lem}
\label{lem:existenceitereregulier}
Let $T_1,\ldots,T_r$ be $n\times n$ matrices that belong to $\mathcal M$. 
Let $\psi(\z)$ be a non-zero analytic function with coefficients in a number field $K$. 
There exist some algebraic numbers $\beta_1,\ldots,\beta_t$ such that, if $\balpha=(\alpha_1,\ldots,\alpha_n) \in (K^\star)^n$ is such that 
the numbers  
$\alpha_1,\ldots,\alpha_n,\beta_1,\ldots,\beta_t$ are multiplicatively independent and $1<\vert\alpha_1\vert,\ldots,\vert\alpha_n\vert<1$,  
then there exists a non-singular matrix $S$ with non-negative integer coefficients, such that 
$$
 \psi(T_i^{k}S\balpha)\neq 0\, ,
$$
for all $k\geq 1$ and all $i$ with $1 \leq i \leq r$.
\end{lem}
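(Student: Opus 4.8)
The plan is to keep every forward orbit $\{T_i^kS\balpha:\,k\ge1\}$, $1\le i\le r$, off the analytic hypersurface $\{\psi=0\}$ at the same time. Recall that any $T\in\mathcal M$ is non-singular, has non-negative integer entries and spectral radius $\rho(T)>1$, and acts on points by $(T\bx)_l=\prod_j x_j^{T_{lj}}$; since no row of $T$ vanishes, $\max_l|(T\bx)_l|\le\max_j|x_j|$ whenever $\max_j|x_j|\le1$. I will look for $S$ among the diagonal matrices $\operatorname{diag}(d_1,\dots,d_n)$ with $d_j\in\mathbb N$, so that $S\balpha=(\alpha_1^{d_1},\dots,\alpha_n^{d_n})$; choosing the $d_j$ large enough that $S\balpha$ lies in the polydisc of convergence of $\psi$ already makes $\psi(T_i^kS\balpha)$ well defined for every $k\ge0$ and every $i$, and forces $T_i^kS\balpha\to\mathbf{0}$ as $k\to\infty$. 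The numbers $\beta_1,\dots,\beta_t$ will be read off from $\psi$ alone.

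Write $\psi(\z)=\sum_\bv c_\bv\z^\bv$ with $c_\bv\in K$ and $\Lambda=\{\bv:c_\bv\ne0\}$. The heart of the argument is the asymptotics of $\psi(T_i^k\bx)$ for $\bx$ with $0<|x_j|<1$. One has $|(T_i^k\bx)^\bv|=\exp\langle\bv,T_i^k\log|\bx|\rangle$, and $T_i^k\log|\bx|$ is, up to an exponentially smaller term, a negative multiple of the Perron eigenvector of $T_i$ (it is $q^k\log|\bx|$ when $T_i=q\,{\rm I}_n$); hence for $k$ large the monomials of $\psi$ that govern $\psi(T_i^k\bx)$ are those $\bv\in\Lambda$ minimising a positive linear functional, and since a minimiser of such a functional over $\Lambda$ is a minimal element of $\Lambda$ — of which there are finitely many by Dickson's lemma — these are finite in number. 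I fix $\beta_1,\dots,\beta_t$ to be the finitely many non-zero ratios $-c_\bv/c_{\bv'}$ with $\bv\ne\bv'$ minimal elements of $\Lambda$. Then I would choose the $d_j$ not only large but generic, avoiding a finite list of proper subvarieties of $(d_1,\dots,d_n)$-space — each proper because every $\log|\alpha_j|$ is non-zero — so that at $\bx=S\balpha$, for every $i$ and every $k\ge1$, no three monomials of $\psi\circ T_i^k$ are simultaneously of maximal modulus. With such a choice $\psi(T_i^kS\balpha)$ is a non-zero monomial times either a factor $1+o(1)$, when one monomial strictly dominates, or a factor $1+(c_{\bv'}/c_\bv)\balpha^\be+o(1)$, with $\be\in\mathbb Z^n\setminus\{0\}$ since $T_i$ and $S$ are non-singular, when exactly two monomials tie; and multiplicative independence of $\alpha_1,\dots,\alpha_n,\beta_1,\dots,\beta_t$ excludes $\balpha^\be=-c_\bv/c_{\bv'}=\beta_l$, so that factor stays away from $0$ and $\psi(T_i^kS\balpha)\ne0$.

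There remain the finitely many $k$ in the pre-asymptotic range, where the $o(1)$ above must be controlled at the actual point $S\balpha$ and not merely asymptotically. One disposes of these exactly as in the one-variable discussion preceding Theorem \ref{thm:main}: adjoin to the genericity requirements on $(d_1,\dots,d_n)$ the finitely many further proper algebraic conditions expressing that $S\balpha$ avoids the relevant exceptional loci of the functions $\psi\circ T_i^k$ for $1\le k<k_0$, each such locus being — after clearing denominators and separating monomials — a finite union of hypersurfaces of the shape $\{\balpha^\be=\beta_l\}$. Choosing $(d_1,\dots,d_n)$ large enough and generic enough to meet all of the above requirements simultaneously produces the desired $S$, and the proof is complete.

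The main obstacle is securing the assertion of the second step for large $k$: that after a \emph{finite} list of linear or algebraic constraints on $(d_1,\dots,d_n)$, at most two monomials of $\psi\circ T_i^k$ are ever simultaneously dominant at $S\balpha$. This is immediate when the leading exponents of $\psi$ along the Perron direction of $T_i$ form a singleton — which happens as soon as all sub-dominant eigenvalues of $T_i$ are of modulus $<1$, since an integer vector driven to $\mathbf{0}$ by an invertible integer matrix must vanish — but in general one must analyse the behaviour driven by sub-dominant eigenvalues of modulus $\ge1$, including complex ones, which rotate the leading block with $k$, and check that triple coincidences are excluded by finitely many generic conditions on $\log|S\balpha|$, the corresponding exponent vectors $(T_i^{\top})^k(\bv-\bv')$ ultimately taking only finitely many values or escaping to infinity. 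Once this combinatorial point is settled, the tail estimate for $\psi$ on a fixed polydisc and the multiplicative independence of $\alpha_1,\dots,\alpha_n,\beta_1,\dots,\beta_t$ are precisely what is needed to conclude.
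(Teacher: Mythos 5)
There is a genuine gap, and you in fact flag it yourself in your last paragraph: the whole argument rests on the claim that, after imposing finitely many generic conditions on $(d_1,\ldots,d_n)$, at most two monomials of $\psi\circ T_i^k$ are simultaneously dominant at $S\balpha$ for every large $k$ and every $i$ — and this is never established. But even granting that combinatorial point, the decisive step fails. When two monomials tie, you must bound the factor $1+(c_{\bv'}/c_{\bv})\balpha^{\be}$ away from $0$ \emph{uniformly enough to beat the tail}, where $\be=\be_k=(T_i^{\top})^k(\bv-\bv')$ grows with $k$. Multiplicative independence of $\alpha_1,\ldots,\alpha_n,\beta_1,\ldots,\beta_t$ only tells you that $\balpha^{\be_k}\neq -c_{\bv}/c_{\bv'}$ for each individual $k$; it does not prevent $\balpha^{\be_k}$ from converging to $-c_{\bv}/c_{\bv'}$ as $k\to\infty$, in which case the near-cancellation of the two leading monomials can be swamped by the $o(1)$ contribution of the remaining (infinitely many) terms of $\psi$, and you cannot conclude $\psi(T_i^kS\balpha)\neq 0$. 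Ruling this out requires a quantitative lower bound for $|\balpha^{\be_k}-\beta_l|$ in terms of the height of $\be_k$, i.e.\ a genuinely Diophantine input of subspace-theorem strength; it is not a consequence of genericity plus multiplicative independence.

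This is exactly the point where the paper's proof diverges from yours: it does not attempt any asymptotic analysis of dominant monomials, but instead invokes Theorem~3 of Corvaja and Zannier on $\mathcal S$-unit points lying on an analytic hypersurface. That theorem delivers, for the single function $\psi$, finitely many exponent vectors $\bmu_1,\ldots,\bmu_t$ and algebraic numbers $\beta_1,\ldots,\beta_t$ such that any sequence of $\mathcal S$-unit points tending to $\boldsymbol 0$ with controlled height that meets $\{\psi=0\}$ infinitely often must satisfy $\x_k^{\bmu_j}=\beta_j$ infinitely often. The paper then enumerates the monoid generated by $T_1,\ldots,T_r$ by increasing norm, applies this to $\x_k=S_k\balpha$ (the class $\M$ hypotheses give the convergence, $\mathcal S$-unit, and height conditions), and uses multiplicative independence to contradict $\balpha^{\bmu_j S_k}=\beta_j$; taking $S=S_{k_0}$ beyond the last zero handles all $T_i^kS$ at once since they come later in the enumeration. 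In short, the hard analytic-Diophantine content that your sketch leaves open is precisely what the Corvaja--Zannier theorem is imported to supply, and without it (or an equivalent lower bound for the relevant linear forms) your argument does not close.
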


\begin{proof}
We let $\Vert\cdot\Vert$ denote the maximum norm (for both vectors with complex coordinates and square matrices with integer coefficients).   
The proof is based on Theorem 3 of \cite{CZ05}. 
This result implies that there exist a finite number of $n$-tuple of integers $\bmu_1,\ldots,\bmu_t$, and 
a finite number of algebraic numbers 
$\beta_1,\ldots,\beta_t$ such that
$$
\prod_{i=1}^t(\x_k^{\bmu_i} - \beta_i) = 0,\, \text{ for infinitely many } k \in \N
$$
for all sequences $(\x_k)_{k\in \N} \subset K^n$ satisfying the following conditions. 
\begin{enumerate}[label=(\Alph*)]
\item $\psi(\x_k)=0$ for infinitely many $k \in \N$.
\item \label{item:CZconvergence}$\lim_{k\to\infty}\x_k = 0$.  
\item \label{item:CZSunit} Every $\x_k$ is a $\mathcal S$-unit, for some finite set of places $\mathcal S$ over $K$. 
\item \label{item:CZhauteur} $h(\x_k)=\mathcal O( -\log ||\x_k ||)$, where we let $h(\cdot)$ denote the logarithmic Weil height.
 \end{enumerate} 
 
Let $\mathfrak M$ denote the monoid generated by the matrices $T_1,\ldots,T_r$ (with respect to usual matrix product). 
Since the matrices $T_1,\ldots,T_r$ belong to the class $\M$, for any $S \in \mathfrak M$ and $1 \leq i \leq r$, we have 
 $\Vert T_iS \Vert > \Vert S\Vert$. Furthermore, there are only finitely many matrices in $\mathfrak M$ with a given norm. 
We can thus define a total order $\succ$ on $\mathfrak M$ in the following way. Take $S,S' \in \mathfrak M$. If $||S|| > ||S'||$, 
 we say that $S \succ S'$, and if $||S_1|| = ||S_2||=\cdots =\Vert S_k\Vert$, 
 we choose an arbitrary order between these matrices.  
 We can thus consider a sequence $(S_k)_{k\geq 1}$ that enumerates the elements of $\mathfrak M$ according to $\succ$.   
 Let us consider a point $\balpha=(\alpha_1,\ldots,\alpha_n) \in (K^\star)^n$ such that 
the numbers  
$\alpha_1,\ldots,\alpha_n,\beta_1,\ldots,\beta_t$ are multiplicatively independent and $1<\vert\alpha_1\vert,\ldots,\vert\alpha_n\vert<1$.
Set $\x_k = S_k\balpha$, for every positive integer $k$. 
We also choose a finite set of places $\mathcal S$ over $K$, such that $\balpha$ is a $\mathcal S$-unit. 
Then, every $\x_k$ is also a $\mathcal S$-unit. 
We can estimate the logarithmic Weil height of the point $\x_k$. 
Let $\rho_1,\ldots,\rho_r$ denote the spectral radii of the matrices $T_1,\ldots,T_r$. 
Since $T_1,\ldots,T_r$ belong to $\mathcal M$, we get that 
$$
\log || \x_k || = \mathcal O(-\rho_1^{c_1}\cdots\rho_r^{c_r})\, ,
$$
where $c_i$ denote the number of occurrences of the matrix $T_i$ in a 
decomposition of $S_k$.  
On the other hand, we have 
$$
||S_k|| = \mathcal O(\rho_1^{c_1}\cdots\rho_r^{c_r}) \, .
$$
We refer the reader to \cite[Section 3]{AF3} for more details. 
Combining these two estimates, we get that 
$$
h(\x_k) \leq - \gamma \log || \x_k || \, ,
$$
for some positive real number $\gamma$. Furthermore, the way we define the order $\succ$ ensures that 
$\x_k \rightarrow 0$, as $k \rightarrow \infty$. Conditions \ref{item:CZconvergence} to \ref{item:CZhauteur} are thus satisfied. 
Let us assume now that $\psi(\x_k)=0$, for infinitely many $k$. Then, there exists an integer $j$, $1\leq j \leq t$, such that
\begin{equation}
\label{eq:xk}
(\x_k)^{\bmu_j} = \balpha^{\bmu_j S_k} = \beta_j\, ,
\end{equation}
for infinitely many integers $k \in \N$. This contradicts the fact that the numbers  
$\alpha_1,\ldots,\alpha_n,\beta_1,\ldots,\beta_t$ are multiplicatively independent.  
Thus, we deduce that there exists a positive integer $k_0$ such that $\psi(\x_k) \neq 0$ for all $k \geq k_0$. Set
$$
S=S_{k_0}\, .
$$
Since $T_i^kS \succ S$ for every $k \in \N$ and every $i$, $1\leq i \leq r$, we obtain that 
$$
\psi(T_i^kS\balpha) \neq 0\, ,
$$
for all $k\geq 1$ and all $i$ with $1 \leq i \leq r$. This ends the proof. 
\end{proof}

Recall that, for any matrix $T$ of class $\M$ we set $\mathcal U(T)$ the set of algebraic points of $(\Q^\star)^n$ 
for which condition (b) of definition 1.2 of the first part \cite{AF3} holds. 

\begin{lem}
\label{lem:densitetranscendence}
Let $T$ be a $n \times n$ matrix of class $\M$ and let $f(\z)$ be a regular-singular $T$-Mahler 
function that is not a rational function. Then there exists a Zariski closed set $\mathcal C$ of $\Q^n$ 
that contains all points $\balpha\in(\Q^\star)^n$ such that the pair $(T,\balpha)$ is admissible, $\balpha$ 
is regular with respect to the underlying regular singular Mahler system, and $f(\balpha)$ is algebraic.
\end{lem}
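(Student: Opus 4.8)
The statement is a multivariate analogue of Becker's observation (used in the proof of Corollary~\ref{coro:function}), so the plan is to reduce it to the one-variable picture via a suitable specialization $\z = S\w$ and then invoke Becker's finiteness result together with the lifting theorem. First I would fix a regular singular $T$-Mahler system
\begin{equation*}
\left(\begin{array}{c} f_{1}(\z) \\ \vdots \\ f_{m}(\z) \end{array}\right)
= A(\z)\left(\begin{array}{c} f_{1}(T\z) \\ \vdots \\ f_{m}(T\z) \end{array}\right)
\end{equation*}
of which $f(\z)=f_1(\z)$ is the first coordinate, chosen so that $f_1(\z),\ldots,f_r(\z)$ are linearly independent over $\Q(\z)$ and the remaining $f_j(\z)$ lie in their $\Q(\z)$-span. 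Since $f(\z)$ is not rational, $f_1(\z),\ldots,f_r(\z)$ together with the constant $1$ span a space of dimension $\geq 2$, and by \cite[Theorem 2.1 or 2.4]{AF3} (the one-variable case being enough here, or directly by Nishioka's theorem in the form used by Becker) one knows that at admissible regular points $\balpha$ the transcendence degree over $\Q$ of $\{f_j(\balpha)\}$ equals that over $\Q(\z)$ of $\{f_j(\z)\}$, hence in particular $f(\balpha)$ is transcendental whenever $f(\z)\notin\Q(\z)$ — \emph{provided} $\balpha$ avoids a suitable proper Zariski closed subset. The whole point of the lemma is to package this "suitable" exceptional locus as a single Zariski closed set $\mathcal C$.

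\textbf{Key steps.} Concretely, I would argue as follows. Let $N$ be a bound (coming from the height estimates in \cite[Section~3]{AF3} and Nishioka's theorem, exactly as in Becker \cite[Lemma~6]{Bec94}) on the degree of an algebraic relation that $f(\balpha)$ would have to satisfy were it algebraic; such a relation, pulled back by the lifting theorem to a relation over $\Q(\z)$, would force $f(\z)$ to be algebraic over $\Q(\z)$, hence — being a $T$-Mahler function of class $\mathcal M$ — rational, contradicting the hypothesis. The subtlety is that the lifting theorem applies only at \emph{admissible regular} points, so one cannot directly conclude "$f(\balpha)$ algebraic $\Rightarrow$ $f(\z)$ algebraic" for \emph{all} $\balpha$; instead one works on the locus where $(T,\balpha)$ is admissible and $\balpha$ is regular, shows that on that locus $f(\balpha)$ is transcendental unless $\balpha$ lies on a proper subvariety, and then defines $\mathcal C$ to be the union of that subvariety with the complement (which is itself Zariski closed) of $\mathcal U(T)$ and of the pole locus of the entries of $A(\z)$ and of a finite gauge transform used to split off $f_1,\ldots,f_r$. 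More precisely: by Becker's argument applied to the one-variable restrictions, for each "direction" the bad set is finite; the content is to see that as the direction varies the bad sets fit together into one algebraic set. The cleanest way to do this is to take a non-zero analytic function $\psi(\z)$ — built from the finitely many "obstruction polynomials" $A_{i_0,j_0}(\z,X_1,\ldots,X_r)$ evaluated at $(f_1(\z),\ldots,f_r(\z))$, as in the proof of Proposition~\ref{prop:becker} — so that on $\{\psi\ne 0\}$ one has $\mathrm{tr.deg}_{\Q}(f_1(\balpha),\ldots,f_r(\balpha)) = r$, hence $f(\balpha)=f_1(\balpha)$ transcendental. Then one sets $\mathcal C := \{\psi = 0\} \cup (\text{pole and non-admissibility loci})$, a genuine Zariski closed set because $\psi$, although only analytic, has a zero set that is cut out on admissible regular points by the algebraicity of the $f_j(\balpha)$; here one invokes Nishioka's theorem / Becker's lemma once more to guarantee that $\{\balpha \text{ admissible regular} : \psi(\balpha)=0\}$ is contained in a proper Zariski closed set, rather than being merely analytically thin.

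\textbf{Main obstacle.} The delicate point — and the one I would spend the most care on — is exactly this last passage from "analytically thin" to "Zariski closed." In one variable (Corollary~\ref{coro:function}) one gets away with "finitely many points in a disc," which is automatically Zariski closed; in $n$ variables the zero locus of an honest analytic function need not be algebraic, so one cannot simply take $\mathcal C = \{\psi=0\}$. The resolution is that we do not need the full zero set of $\psi$: we only need the set of admissible regular $\balpha$ at which $f_j(\balpha)$ are algebraically dependent, and by the lifting theorem this set is contained in the vanishing locus of the \emph{polynomial} obstruction — i.e. in $\{\balpha : A_{i_0,j_0}(\balpha, f_1(\balpha),\ldots,f_r(\balpha))=0 \text{ for all valid } i_0,j_0\}$ — but this is still an analytic condition. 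The correct move, mirroring Lemma~\ref{lem:existenceitereregulier}, is to combine this with the Corvaja–Zannier type input from \cite{CZ05} (or with a direct argument via Nishioka's theorem saying that algebraic values of a transcendental Mahler function accumulate only on a proper algebraic set), so that the set of algebraic points with $f(\balpha)$ algebraic is genuinely contained in a finite union of translates of subtori together with a proper subvariety; one then takes $\mathcal C$ to be the Zariski closure of that set. I expect the write-up to consist mostly of bookkeeping: tracking which finitely many polynomials cut out $\mathcal C$, and checking that admissibility and regularity are themselves Zariski-open conditions (which follows from their definitions in \cite{AF3}).
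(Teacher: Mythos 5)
You correctly isolate the crux — in several variables the zero locus of an analytic obstruction function is not Zariski closed — but your proposed resolutions do not actually close this gap. Taking ``the Zariski closure of the bad set'' begs the question: any set has a Zariski closure, and the content of the lemma (as it is used in the proof of Theorem \ref{th:alg_ind_multivariable}, where one needs a single polynomial $\Delta$ that is nonzero somewhere off $\mathcal C$) is that the bad locus sits inside a \emph{proper} algebraic subset, which a bare closure argument does not give. The appeal to Corvaja--Zannier is also misplaced: that input (Lemma \ref{lem:existenceitereregulier}) controls zeros of an analytic function only along sequences of $\mathcal S$-unit points tending to the origin with controlled height, and is used in the paper for the quite different purpose of producing one good point $S\balpha$; it does not convert an analytic zero set into an algebraic one. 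Finally, including the non-admissibility and non-regularity loci in $\mathcal C$ is both unnecessary (the lemma only asks $\mathcal C$ to contain the admissible regular points with $f(\balpha)$ algebraic) and harmful, since those loci are not Zariski closed and would destroy properness.

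The paper's mechanism, which your plan misses, is that algebraicity of $f(\balpha)$ is an \emph{inhomogeneous linear} condition, $f_1(\balpha)-c=0$, so the lifting theorem produces a linear relation $q_0(\z)+q_1(\z)f_1(\z)+\cdots+q_m(\z)f_m(\z)=0$ over $\Q(\z)$ with the specialization data $q_1(\balpha)=1$ and $q_2(\balpha)=\cdots=q_m(\balpha)=0$. Writing this relation in a fixed polynomial basis $(r_{0,i},\ldots,r_{m,i})$, $1\le i\le l$, of the $\Q(\z)$-space of linear relations among $1,f_1,\ldots,f_m$, these conditions force the $l\times(m-1)$ \emph{polynomial} matrix $R(\z)=(r_{j,i}(\z))_{j\ge 2}$ to drop rank at $\balpha$ (a nonzero vector kills $R(\balpha)$ while keeping the coefficient of $f_1$ equal to $1$). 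Hence $\balpha$ lies in the common zero locus of the maximal minors of $R$ — genuinely cut out by polynomials — and irrationality of $f$ guarantees $R(\z)$ has full rank $l$, so at least one minor is a nonzero polynomial and $\mathcal C$ is proper. No analytic function ever enters; the ``analytically thin versus Zariski closed'' difficulty is dissolved rather than confronted. Your one-variable reduction and the transcendence-degree bookkeeping via Becker's lemma are not needed here and, as written, would not assemble into a proof.
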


\begin{proof}
Let $(f_1(\z):=f(\z), f_2(\z),\ldots,f_m(\z))$ be a vector representing a solution to a regular singular $T$-Mahler system. 
Let $\balpha\in(\Q^\star)^n$ satisfying the assumptions of the lemma. 
The lifting theorem ensures the existence of polynomials $q_0(\z)q_1(\z),\ldots,q_m(\z) \in \Q[\z]$, not all zero, such that
\begin{eqnarray}
\label{eq:relationlinéaire}
q_0(\z)+q_1(\z)f_1(\z) + \cdots + q_m(\z)f_m(\z) &=& 0\, ,
\\
\label{eq:specialisation} q_0(\balpha) = f(\balpha),\ q_1(\balpha)=1,\ q_2(\balpha)=\cdots=q_m(\balpha)&=&0\, .
\end{eqnarray}
Let us consider the $\Q(\z)$-vector space of the linear relations over $\Q(\z)$ between the power series $\boldsymbol 1,f_1(\z),\ldots,f_m(\z)$. 
We choose a basis of this vector space, say $(r_{0,i}(\z),r_{1,i}(\z),\ldots,r_{m,i}(\z))\in\Q[z]^{m+1}$, $1 \leq i \leq l$. 
Let us consider the $l \times (m-1)$ matrix 
$$
R(\z)=\left(\begin{array}{ccc} r_{2,1}(\z) & \cdots & r_{m,1}(\z)
\\
\vdots & & \vdots
\\
r_{2,l}(\z) & \cdots & r_{m,l}(\z)
\end{array}
\right)
$$
Since $f_1(\z)=f(\z)$ is not a rational function, 
the matrix $R(\z)$ has rank $l$. 
Let $\Delta_1(\z),\ldots,\Delta_{m-1-l}(\z)$ denote the minors of $R(\z)$ of rank $l$. If $\balpha\in(\Q^\star)^n$ is such that $f(\balpha)$ is algebraic, 
Equalities \eqref{eq:relationlinéaire} and \eqref{eq:specialisation} implies the existence of a vector $\lambd(\z)=(\lambda_1(\z),\ldots,\lambda_l(\z))\in\Q[\z]^l$ 
such that 
$$
\lambd(\balpha)R(\balpha)=0\, .
$$
It follows that $\Delta_i(\balpha)=0$ for every $i$, $1\leq i \leq m-1-l$. 
Hence, $\balpha$ belongs to the Zariski closed set
$$
\mathcal C = \bigcap_{i=1}^{m-1-l} \{\balpha \in\Q^n\ \mid \ \Delta_i(\balpha)=0\}\, .
$$
This ends the proof. 
\end{proof}

We are now ready to prove Theorem \ref{th:alg_ind_multivariable}. 

\begin{proof}[Proof of Theorem \ref{th:alg_ind_multivariable}]
Our strategy is to find some suitable algebraic point $\balpha=(\alpha_1,\ldots,\alpha_n)$ at which we can 
apply the first purity theorem in order to obtain the algebraic independence of the numbers $f_1(\bbeta),\ldots,f_{r}(\bbeta)$ 
where $\bbeta=S\balpha$ for some suitable matrix $S$ with non-negative integer coefficients.    

 Let $\z=(z_1,\ldots,z_n)$ be a family of indeterminates, and $T_1,\ldots,T_r$ be $n\times n$ matrices that belong to $\M$ and 
 with spectral radii $\rho_1,\ldots,\rho_r$. We consider for each $i$, a $T_i$-Mahler power series $f_i(\z)$, from a system
\begin{equation}
\label{eq:Mahlermemetaille}\tag{\theequation .i}
\left(\begin{array}{c} f_{i,1}(\z) \\ \vdots \\ f_{i,m_i}(\z) \end{array}\right)
= A_i(\z)\left(\begin{array}{c} f_{i,1}(T_i\z) \\ \vdots \\ f_{i,m_i}(T_i\z) \end{array}\right)\, 
\end{equation}
with $f_i(\z)=f_{i,1}(\z)$, and $A_i(\z) \in {\rm Gl}_{m_i}(\Q(\z))$.
If $\rho<1$ is a sufficiently small positive real number, then the functions $f_{i,j}(\z)$, $1 \leq i \leq r$, $1 \leq j \leq m_i$ 
are all analytic on the open ball $B({\boldsymbol 0},\rho)$. In particular, $B(0,\rho) \subset \mathcal U(T_i)$ for every $i$. 
Define, for each $i$, a set $\mathcal C_i$ as in Lemma \ref{lem:densitetranscendence}, and choose $\Delta(\z)$ a polynomial 
such that if $\Delta(\balpha) \neq 0$ for some $\balpha \in B(0,\rho)$, then $\balpha$ does bot belong to any of the $\mathcal C_i$. 
We let $\delta(\z)$ denote a polynomial such that $\balpha$ 
is a regular point for the Mahler system \eqref{eq:Mahlermemetaille} if $\delta(T_i^k\balpha) \neq 0$ for every $k \geq 1$ and $1 \leq i \leq r$.  
We can take $\delta(\z)$ to be the product of the determinants, and of the denominators of the coefficients of the matrices $A_i(\z)$. 
Set 
$$
\psi(\z)=\Delta(\z)\delta(\z)\, ,
$$
and let $K$ be a number field containing the coefficients of $\psi(\z)$.  We choose $\balpha \in B(0,\rho) \cap K^n$ 
as in Lemma \ref{lem:existenceitereregulier}. By Lemma \ref{lem:existenceitereregulier}, 
there exists a $n \times n$ non-singular matrix $S$ with non-negative integer coefficients such that 
$$
\psi(T_i^kS\balpha) \neq 0\, ,
$$
for all $k\geq 1$ and all $i$ with $1 \leq i \leq r$. 
Since $S$ is non singular, and $\balpha$ has multiplicatively independent coordinates, the point 
$\bbeta:=S\balpha$ remains $T_i$-independent for every $i$. Hence, the pair $(T_i,\bbeta)$ is admissible. 
Since $\bbeta\in B(0,\rho)$, the functions $f_{i,j}(\z)$ are all well-defined at $\bbeta$. Since $\delta(T_i^k\bbeta) \neq 0$ for every $i$, 
the point $\bbeta$ is regular with respect to each system \eqref{eq:Mahlermemetaille}. 
We can thus apply the first purity theorem at $\bbeta$. 
Set $\mathcal E_i = \{f_i(\bbeta)\}$, $1 \leq i \leq r$, and $\mathcal E = \cup_{i} \mathcal E_i$. We have
$$
{\rm Alg}_{\Q}\left(\mathcal E\right)= \sum_{i=1}^r {\rm Alg}_{\Q}\left(\mathcal E_i \mid \mathcal E\right)\, .
$$
Since $\Delta(\bbeta)\neq 0$, the numbers $f_i(\bbeta)$ are transcendental, so that  
$ {\rm Alg}_\Q\left(\mathcal E_i \mid \mathcal E\right)=\{0\}$ for every $i$. 
Hence, ${\rm Alg}_\Q\left(\mathcal E\right)=\{0\}$ 
and the numbers $f_1(\bbeta),\ldots,f_{r}(\bbeta)$ are algebraically independent over $\Q$. 
In particular, the functions $f_1(\z),\ldots,f_r(\z)$ are algebraically independent over $\Q(\z)$. 
This ends the proof. 
\end{proof}

\section{Specializations of multivariate Mahler functions}\label{sec:spe}

As mentioned in the introduction, one interest of the multidimensional theory is to enlarge the class of 
one-dimensional analytic functions that fall under the scope of Mahler's method. 
In this section, we define the notion of a good $T$-Mahler specialization and prove an analogue of Corollary \ref{coro:function} 
and Theorem \ref{th:alg_ind_multivariable} for these functions. Then we discuss the connection with a nice extension of Cobham's theorem 
to morphic words obtained by Durand \cite{Du11}. 

\subsection{Good $T$-Mahler specializations and algebraic independence}

Given a quadratic irrational real number $\omega$, the Hecke--Mahler series $f_{\omega}(z)=\sum_{n\geq 0} \lfloor n\omega\rfloor z^n\in\Q\{z\}$ 
is a typical example of what we would like to think about as a \emph{good Mahler specialization}. Indeed, one has $f_{\omega}=F_{\omega}\circ \sigma$ where 
$F_{\omega}(z_1,z_2)=\sum_{n_1=0}^{\infty} \sum_{n_2=0}^{\lfloor n_1\omega\rfloor}z_1^{n_1}z_2^{n_2}$ is a two-dimensional Mahler function 
and  
$$
\sigma: \left\{\begin{array}{rcl} \Q & \rightarrow &\Q^{2}
\\
z & \mapsto & ( z,1)\, 
 \end{array} \right.
$$
is a polynomial map. 
We consider that this specialization is \emph{good} not only because it is the composition of a Mahler function by a polynomial map, 
but also because $F_{\omega}$ is a regular singular $T$-Mahler function for a suitable $2\times 2$ matrix $T\in \mathcal M$, 
and for all $\alpha\in\Q$, $0<\vert\alpha\vert<1$, the pair $(T,\sigma(\alpha))$ is admissible, and the point $\sigma(\alpha)$ is regular 
with respect to the Mahler system associated with $F_{\omega}$.  
These properties allow us to apply Mahler's method in several variable to the study of the values of $f_{\omega}$ at algebraic points.  
This leads us to the following definition. 

\begin{defi}\label{def:goodspecialization}
Let $\z=(z_1,\ldots,z_n)$ be a family of indeterminates and $T\in\mathcal M$ be a $n\times n$ matrix.  
 A \emph{good $T$-Mahler specialization} is a 
power series of the form $f\circ \sigma(z)\in\Q\{z\}$, where $f(\z)$ is a regular singular $T$-Mahler function 
and $\sigma$ is a map of the form 
$$
\sigma: \left\{\begin{array}{rcl} \Q & \rightarrow &\Q^{n}
\\
z & \mapsto & ( p_1(z),\ldots,p_m(z))\, 
 \end{array} \right.
$$
where $p_1(z),\ldots,p_m(z)$ are non-zero polynomials in $\Q[z]$, and which satisfies the following conditions. 

\begin{enumerate}
\item[{\rm (i)}] \label{item:admissibility} There exists a punctured neighborhood $\mathcal V$ of $0$ in $\Q$ 
such that $(T,\sigma(\xi))$ is admissible for all $\xi \in \V$. 

\item[{\rm (ii)}] \label{item:regularity} The point $\sigma(\xi)$ is regular for all $\xi \in \mathcal V$.
\end{enumerate}
\end{defi}

We prove the following analogue of Corollary \ref{coro:function} 
and Theorem \ref{th:alg_ind_multivariable} for good $T$-Mahler specializations. 

\begin{thm}
\label{th:algebraicindependancefunctions}
Let $T_1,\ldots,T_r$ be matrices in $\mathcal M$ such that $\rho(T_1),\ldots,\rho(T_r)$ are pairwise 
multiplicatively independent. For every $i$, $1\leq i\leq r$, let $g_i(z)$ be good $T_i$-Mahler specialization that is 
not a rational function. Then  $g_1(z),\ldots,g_r(z)$ are algebraically independent over $\Q(z)$.
\end{thm}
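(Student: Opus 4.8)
The plan is to reduce Theorem~\ref{th:algebraicindependancefunctions} to the multivariate statement Theorem~\ref{th:alg_ind_multivariable} by finding a single algebraic point at which the specializing maps $\sigma_i$ all land in the respective good loci and at which all the multivariate functions $f_i$ take transcendental values. First I would set up the data: for each $i$, write $g_i = f_i \circ \sigma_i$ with $f_i(\z_i)$ a regular singular $T_i$-Mahler function in $n_i$ variables, $\sigma_i(z) = (p_{i,1}(z),\ldots,p_{i,n_i}(z))$ a polynomial map, and $\mathcal V_i$ a punctured neighbourhood of $0$ on which $(T_i,\sigma_i(\xi))$ is admissible and $\sigma_i(\xi)$ is regular. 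Shrinking, we may take a common punctured neighbourhood $\mathcal V = B(0,\rho)\setminus\{0\}$ for some small $\rho>0$ working for all $i$ simultaneously, on which moreover every coordinate function of every underlying Mahler system is analytic.

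Next I would assemble the block-diagonal system. Set $n = n_1+\cdots+n_r$, let $\z = (\z_1,\ldots,\z_r)$, and form $T = \mathrm{diag}(T_1,\ldots,T_r) \in \mathcal M$, and consider on the diagonal the map $\tau: z\mapsto (\sigma_1(z),\ldots,\sigma_r(z)) \in \Q^n$. The key obstruction to overcome is that the coordinates $\sigma_i(\xi)$ of a single point $\tau(\xi)$ need not be $T$-independent — for instance if the $p_{i,j}$ share common factors or if $\xi$ is a bad value — so I cannot directly invoke the first purity theorem at $\tau(\xi)$ with singleton sets $\mathcal E_i$. This is exactly the role played by the matrix $S$ in the proof of Theorem~\ref{th:alg_ind_multivariable}: I would apply Lemma~\ref{lem:existenceitereregulier} to the function $\psi(\z) = \Delta(\z)\delta(\z)$, where $\delta$ is built from the determinants and denominators of the matrices $A_i(\z_i)$ (so that $\delta(T^k\bbeta)\neq 0$ forces regularity), and $\Delta$ comes from Lemma~\ref{lem:densitetranscendence} applied to each $f_i$ (so that $\Delta(\bbeta)\neq 0$ forces each $f_i(\bbeta)$ transcendental, using that $f_i$ is not rational). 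Here, though, I must incorporate the polynomial map: rather than running Lemma~\ref{lem:existenceitereregulier} on $\psi$ directly, I would run it on the pullback $\psi(\tau(z))\cdot \bigl(\prod_{i,j}p_{i,j}(z)\bigr)$, a nonzero one-variable analytic function, to produce algebraic $\beta_1,\ldots,\beta_t$ and then choose $\alpha \in B(0,\rho)\cap K$ with $\alpha,\beta_1,\ldots,\beta_t$ multiplicatively independent — more carefully, I would need the coordinates of $\tau(\alpha)$ to be multiplicatively independent. The cleanest route is to pass to the point $\bbeta = S\tau(\alpha)$ where $S = \mathrm{diag}(S_1,\ldots,S_r)$ is obtained by a blockwise application of Lemma~\ref{lem:existenceitereregulier}, noting that $\sigma_i$ composed with raising coordinates to powers is again a good specialization (condition (i)--(ii) of Definition~\ref{def:goodspecialization} are stable under precomposing $\sigma_i$ with the monomial map given by $S_i$, since $(T_i,\cdot)$-admissibility and regularity are preserved along the $\mathfrak M$-orbit).

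Once such a point $\bbeta = (\bbeta_1,\ldots,\bbeta_r)$ is secured, with $\bbeta_i$ in the good locus of $f_i$, with $\bbeta_i$ being $T_i$-independent for each $i$, and with each $f_i(\bbeta_i)$ transcendental, I would apply the first purity theorem (Theorem 2.4 of \cite{AF3}) to the $r$ systems \eqref{eq:Mahlermemetaille}, which have pairwise multiplicatively independent spectral radii by hypothesis. Setting $\mathcal E_i = \{f_i(\bbeta_i)\} = \{g_i(\gamma)\}$ for the corresponding $\gamma$ with $\tau(\gamma)$ appropriately related to $\bbeta$ (one gets $g_i$ evaluated at a single algebraic point after tracking the monomial substitution), and $\mathcal E = \cup_i \mathcal E_i$, the first purity theorem gives ${\rm Alg}_\Q(\mathcal E) = \sum_i {\rm Alg}_\Q(\mathcal E_i\mid\mathcal E)$; since each $f_i(\bbeta_i)$ is transcendental, each summand vanishes, so ${\rm Alg}_\Q(\mathcal E)=\{0\}$ and the values $g_1(\gamma),\ldots,g_r(\gamma)$ are algebraically independent over $\Q$. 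A fortiori the functions $g_1(z),\ldots,g_r(z)$ are algebraically independent over $\Q(z)$, which is the claim. The main obstacle, as indicated, is the bookkeeping in the second paragraph: arranging that a single one-variable algebraic argument $\alpha$ simultaneously produces, through the several polynomial maps $\sigma_i$ and the monoid of iterates of $T_i$, a point whose blocks are each $T_i$-independent and avoid all the bad Zariski-closed sets $\mathcal C_i$ — this requires combining Lemma~\ref{lem:existenceitereregulier} with the Cartier–Zannier type estimates blockwise and checking that goodness of the specialization is preserved under the substitution by $S$.
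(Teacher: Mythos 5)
There is a genuine gap, and it sits in your second paragraph. The detour through Lemma \ref{lem:existenceitereregulier} and the matrix $S$ is both unnecessary and unworkable here. It is unnecessary because the first purity theorem (which is the one that applies, the spectral radii being pairwise multiplicatively independent) only requires, for each $i$ \emph{separately}, that $(T_i,\balpha_i)$ be admissible and $\balpha_i$ be regular; it does not require joint $T$-independence of the concatenated point $\tau(\xi)$. Conditions (i) and (ii) of Definition \ref{def:goodspecialization} already hand you admissibility and regularity of $\sigma_i(\xi)$ for \emph{every} $\xi$ in a punctured neighbourhood $\mathcal V_i$ of the origin, so no point has to be manufactured. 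It is unworkable because Lemma \ref{lem:existenceitereregulier} requires the coordinates of the base point to be multiplicatively independent, and the coordinates of $\tau(\alpha)=(p_{1,1}(\alpha),\ldots)$ are values of polynomials at a common argument — typically multiplicatively dependent, and in the motivating Hecke--Mahler example one coordinate is literally $1$. Worse, after replacing $\tau(\alpha)$ by $S\tau(\alpha)$ the numbers $f_i(S_i\sigma_i(\alpha))$ are in general no longer values of $g_i$ at any algebraic point, so even if you proved them algebraically independent you could not conclude anything about the functions $g_1(z),\ldots,g_r(z)$; your phrase ``after tracking the monomial substitution'' does not resolve this.

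The ingredient your proposal is actually missing is the finiteness of the exceptional set $\mathcal E_{g_i}=\{\xi\in\Q\cap\mathcal V_i \mid g_i(\xi)\in\Q\}$, which is the paper's Lemma \ref{lem:exspe}. Your appeal to Lemma \ref{lem:densitetranscendence} and the polynomial $\Delta$ does not suffice: $\mathcal C_i$ is a proper Zariski-closed subset of $\Q^{n_i}$, but a priori the curve $\xi\mapsto\sigma_i(\xi)$ could lie entirely inside $\mathcal C_i$. Ruling this out uses the lifting theorem together with the irrationality of the \emph{composed} function $g_i=f_i\circ\sigma_i$ (not merely of $f_i$): one specializes a basis of the $\Q(\z)$-linear relations among $1,f_{i,1},\ldots,f_{i,m_i}$ along $\sigma_i$ and checks the resulting one-variable matrix still has full rank because $g_i$ is irrational. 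Once that finiteness is in hand, the proof is short: pick $\xi$ in $\bigcap_i\mathcal V_i$ outside the finite union $\bigcup_i\mathcal E_{g_i}$, note that each $(T_i,\sigma_i(\xi))$ is admissible, each $\sigma_i(\xi)$ is regular, and each $f_i(\sigma_i(\xi))=g_i(\xi)$ is transcendental, then apply the first purity theorem to conclude that $g_1(\xi),\ldots,g_r(\xi)$ are algebraically independent over $\Q$, hence the functions over $\Q(z)$. Your final paragraph is essentially this argument; the construction preceding it should be deleted rather than repaired.
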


Theorem \ref{th:algebraicindependancefunctions} is a consequence of the following lemma. 

\begin{lem}
\label{lem:exspe}
Let $g(z)$ be a good $T$-Mahler specialization and let $\mathcal V$ be as in Definition \ref{def:goodspecialization}. Then the set
$$
\mathcal E_g := \left\{\xi\in\Q\cap \mathcal V \mid g(\xi)\in \Q\right\}
$$
is finite. 
\end{lem}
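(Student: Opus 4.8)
The statement to prove is Lemma~\ref{lem:exspe}: for a good $T$-Mahler specialization $g(z)=f\circ\sigma(z)$, the set $\mathcal E_g=\{\xi\in\Q\cap\mathcal V\mid g(\xi)\in\Q\}$ is finite. The natural strategy is to reduce the claim about the one-variable function $g$ to a claim about the multivariate regular singular $T$-Mahler function $f(\z)$, and then invoke the machinery already set up, specifically Lemma~\ref{lem:densitetranscendence}. The first step is to observe that $g$ is not rational as a function of $z$; I must check that this forces $f(\z)$ not to be a rational function of $\z$ (if $f$ were rational, so would $g=f\circ\sigma$ be, since $\sigma$ is a polynomial map — this is the contrapositive, and it is immediate). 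So $f(\z)$ is a regular singular $T$-Mahler function that is not rational, and Lemma~\ref{lem:densitetranscendence} applies: there is a proper Zariski closed set $\mathcal C\subset\Q^n$ containing every point $\balpha\in(\Q^\star)^n$ that is admissible for $(T,\balpha)$, regular with respect to the underlying system, and at which $f$ takes an algebraic value.

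\textbf{Key steps.} Now take $\xi\in\mathcal E_g$, so $\xi\in\Q\cap\mathcal V$ and $g(\xi)=f(\sigma(\xi))\in\Q$. By the defining properties of a good $T$-Mahler specialization (Definition~\ref{def:goodspecialization}, conditions (i) and (ii)), the point $\balpha:=\sigma(\xi)$ satisfies: $(T,\sigma(\xi))$ is admissible, and $\sigma(\xi)$ is regular with respect to the Mahler system attached to $f$. One should also note $\sigma(\xi)\in(\Q^\star)^n$, since the $p_j$ are non-zero polynomials and, after possibly shrinking $\mathcal V$ to exclude their finitely many roots (which is harmless — I would fold this into the standing choice of $\mathcal V$, or simply remark that points $\xi$ with some $p_j(\xi)=0$ form a finite set that can be discarded), no coordinate vanishes. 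Hence $\balpha=\sigma(\xi)$ lies in $\mathcal C$, i.e. $\sigma(\xi)\in\mathcal C$ for every $\xi\in\mathcal E_g$. Therefore $\mathcal E_g\subseteq\sigma^{-1}(\mathcal C)\cap\mathcal V$ (minus the finite set of roots of the $p_j$).

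\textbf{Conclusion.} It remains to argue that $\sigma^{-1}(\mathcal C)$ is finite. Since $\mathcal C$ is a proper Zariski closed subset of $\Q^n$, it is contained in the zero set of some non-zero polynomial $P(\z)$. Then $\sigma^{-1}(\mathcal C)\subseteq\{\xi\in\Q\mid P(p_1(\xi),\ldots,p_m(\xi))=0\}$, and $P(p_1(z),\ldots,p_m(z))$ is a polynomial in the single variable $z$. The one thing that genuinely needs checking — and this is the only real obstacle — is that this polynomial is \emph{not identically zero}, for otherwise its zero set is all of $\Q$ and the argument collapses. If $P\circ\sigma\equiv 0$, then the image of $\sigma$ lies in the hypersurface $\{P=0\}$, hence in $\mathcal C$; but then $f$ restricted to the image of $\sigma$ is irrelevant, and worse, \emph{every} $\xi\in\mathcal V$ would be a potential candidate. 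The resolution is that $\mathcal C$ (equivalently, $\{P=0\}$) was produced from the \emph{minors} $\Delta_i(\z)$ in the proof of Lemma~\ref{lem:densitetranscendence}, and one must rule out that the image of the curve $\xi\mapsto\sigma(\xi)$ is entirely contained in $\{\Delta_1=\cdots=\Delta_{m-1-l}=0\}$. This is precisely where one uses that $g=f\circ\sigma$ is not rational: running the proof of Lemma~\ref{lem:densitetranscendence} directly with the one-variable function $g(z)$ and the restricted system obtained by specializing along $\sigma$ (which is still regular singular by condition (ii) of Definition~\ref{def:goodspecialization}, and admissible by condition (i)) shows that the corresponding minors, now polynomials in $z$, are not all identically zero — because $g(z)=f_1\circ\sigma$ is not a rational function of $z$. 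Equivalently: apply Lemma~\ref{lem:densitetranscendence} not to $f$ and $T$ on $\Q^n$, but to the composed data, obtaining directly a Zariski closed (hence finite) subset $\mathcal C_g\subset\Q$ containing all $\xi\in\mathcal V$ with $g(\xi)$ algebraic. This gives $\mathcal E_g\subseteq\mathcal C_g$ finite, completing the proof. I expect the main subtlety to be exactly this — making sure the relevant polynomial in $z$ does not degenerate to zero — and the cleanest way to handle it is to re-derive the statement of Lemma~\ref{lem:densitetranscendence} in the one-variable specialized setting rather than pulling back a generic hypersurface.
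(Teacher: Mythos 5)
Your proposal is correct and, after the course-correction at the end, coincides with the paper's own argument: the paper does not pull back the Zariski closed set of Lemma \ref{lem:densitetranscendence} via $\sigma$ (precisely because $P\circ\sigma$ could vanish identically, the gap you correctly identify), but instead reruns that proof with the basis relations composed with $\sigma$, so that the matrix $R(z)$ has entries in $\Q[z]$ and its full rank $l$ is forced by the irrationality of $g=f\circ\sigma$ rather than of $f$. Your closing paragraph is exactly this argument, so the proposal is sound; the only cosmetic remark is that one does not literally form a ``restricted one-variable system'' --- the lifting theorem is applied to the original multivariate system at the admissible regular points $\sigma(\xi)$ guaranteed by Definition \ref{def:goodspecialization}, and only the coefficient polynomials are specialized along $\sigma$.
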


\begin{proof} 
 We first introduce 
 the following notation. We let $\z=(z_1,\ldots,z_n)$ 
be indeterminates, and we assume that $T$ is a square matrix of size $n$.   
By assumption, we can assume that 
$$
g(z) := f\circ \sigma(z) \in \Q\{z\}\, ,
$$
where $\sigma: \Q \rightarrow \Q^{n_i}$, and where $f(\z)=f_{1}(\z)$ is the first coordinate of a solution to the regular singular 
$T$-Mahler system   
\begin{equation}
\label{eq:Mahler}
\left(\begin{array}{c} f_{1}(\z) \\ \vdots \\ f_{m}(\z) \end{array}\right)
= A(\z)\left(\begin{array}{c} f_{1}(T\z) \\ \vdots \\ f_{m}(T\z) \end{array}\right)\, ,
\end{equation}
with $A(\z) \in {\rm Gl}_{m}(\Q(\z))$. 
Furthermore, there exists a punctured neighborhood of the 
origin $\mathcal V$ in $\Q$ such that, for every $\xi \in \mathcal V$, the pair $(T,\sigma(\xi))$ is admissible, 
and the point $\sigma(\xi)$ is regular with respect to the system \eqref{eq:Mahler}. 
Following the proof of Lemma \ref{lem:densitetranscendence}, we are going to build a proper 
Zariski closed set $\mathcal C$ of $\mathcal V$ containing every algebraic numbers $\xi \in \mathcal V$ such that $g(\xi)$ is 
algebraic, that is such that $\mathcal E_g\subset \mathcal C$. 
Let us consider the $\Q(\z)$-vector space of the linear relations over $\Q(\z)$ between the power series 
$\boldsymbol 1,f_{1}(\z),\ldots,f_{m}(\z)$.  Pick a basis of this vector space, say 
$(r_{0,j}(\z),r_{1,j}(\z),\ldots,r_{m,j}(\z))\in \Q[\z]^{m+1}$, $1 \leq j \leq l$. 
We consider the $l \times (m-1)$ matrix
$$
R(z)=\left(\begin{array}{ccc} r_{2,1}\circ \sigma(z) & \cdots & r_{m,1}\circ \sigma(z)
\\
\vdots & & \vdots
\\
r_{2,l}\circ \sigma(z) & \cdots & r_{m,l}\circ \sigma(z)
\end{array}
\right)
$$
Since by assumption $f\circ \sigma(z)=g(z)$ is irrational, the matrix $R(z)$ has rank $l$. 
Let $\xi \in \mathcal E_g$. Then the lifting theorem implies 
that $R(\xi)$ has rank strictly less than $l$. Setting $\mathcal C = \{\xi \in \mathcal V \ \mid \ {\rm rank}(R(\xi))<l \}$,  
we thus have that $\mathcal E_g\subset\mathcal C$.  
On the other hand, the definition of $\mathcal C$ shows that it is a one-dimensional Zariski closed set of $\mathcal V$, for 
the $r_{i,j}\circ \sigma(z)$ belong to $\Q[z]$. Furthermore, since $R(z)$ has rank $l$, $\mathcal C$ must be a proper subset of $\mathcal V$. 
As a proper one-dimensional Zariski closed set is always finite, we obtain that  
$\mathcal E_g$ is a finite set.   
This ends the proof. 
\end{proof}

\begin{proof}[Proof of Theorem \ref{th:algebraicindependancefunctions}] 
With each function $g_i$, we associate a $T_i$-Mahler function $f_i$, a map $\sigma_i$, and a set $\mathcal V_i$ as in 
Definition \ref{def:goodspecialization}. 
We also associate a set $\mathcal E_{g_i}$ as in Lemma \ref{lem:exspe}. Since each $\mathcal V_i$ is a punctured neighborhood 
of the origin in $\Q$, it follows that $\mathcal V_0:=\cap_{i=1}^r\mathcal V_i$ is infinite. 
Set $\mathcal E=\cup_{i=1}^r \mathcal E_{g_i}$. By Lemma \ref{lem:exspe}, $\mathcal E_0:=\mathcal E\cap \mathcal V_0$ is a finite set. 
Thus, the set  $\mathcal V_0\setminus \mathcal E_0$ is not empty. Let $\xi\in \mathcal V_0\setminus \mathcal E_0$. Then 
for every $1 \leq i \leq r$, the pair $(T_i,\sigma_i(\xi))$ is admissible, the point $\sigma_i(\xi)$ is regular with respect to the regular singular 
$T_i$-Mahler system associated with $f_i$, and the number $f_i(\sigma_i(\xi))$ is transcendental. 
Since $\rho(T_1),\ldots,\rho(T_r)$ are pairwise 
multiplicatively independent, we can apply the first purity theorem. We deduce that the numbers $g_1(\xi),\ldots,g_r(\xi)$ 
are algebraically independent over $\Q$. 
Hence, the power series $g_1(y),\ldots,g_r(y)$ are algebraically independent over $\Q(y)$. 
This ends the proof. 
\end{proof}

\begin{rem}
The proof of Theorem  \ref{th:algebraicindependancefunctions} shows that the same conclusion still 
holds true if we replace the assumption that each $\mathcal V_i$ is a punctured neighborhood of the origin in $\Q$ 
by the weaker assumption that the set $\cap_{i=1}^r\mathcal V_i$ is infinite. 
\end{rem}

\subsection{Morphic sequences, Cobham's theorem, and specializations}\label{sec:morph}

An alphabet $A$ is a finite set of symbols, also called letters. A finite word over $A$ is a 
finite sequence of letters in $A$  or, equivalently, an element of $A^*$, the free monoid  generated by $A$.  
We let denote by $\vert W\vert$ the length of a finite word $W$, that is, the number 
of symbols in $W$.  
If $a$ is a letter and $W$ a finite word,  
then $\vert W\vert_a$ stands for the number of occurrences of the letter $a$ in $W$. 
A map from $A$ to $A^*$ naturally extends to a map from $A^*$ into itself called an (endo)morphism.  
Given two alphabets $A$ and $B$, a map from $A$ to $B$ naturally extends to a map from $A^*$ into $B^*$ 
called a coding.  Let $q\geq 2$ be an integer. A morphism $\varphi$ over $A$ is said to be $q$-\emph{uniform} if 
$\vert \varphi(a)\vert =q$ for every letter $a$ in $A$, and simply  
\emph{uniform} if it is $q$-uniform for some $q$.  
A morphism $\varphi$ over $A$ is said to be prolongable 
on $a$ if $\varphi(a)=aW$ for some word $W$ and if the length of the word 
$\varphi^n(a)$ tends to infinity with $n$. Then the word 
$$
\varphi^{\omega}(a):= \lim_{n\to\infty} \varphi^n(a) = aW\varphi(W)\varphi^{2}(W)\cdots
$$
is the unique fixed point of $\varphi$ that begins with $a$. 
An infinite word obtained by iterating a prolongable morphism $\varphi$ is said to be pure morphic. 
The image of a pure morphic word under a coding is a {\it morphic word} or a {\it morphic sequence}. 
A useful object associated with a morphism $\varphi$ is the so-called {\it incidence matrix} of $\varphi$,  
denoted by $M_{\varphi}$.  We first need to choose an ordering of the elements of $A$, say 
$A=\{a_1,a_2,\ldots,a_d\}$, and then $M_{\varphi}$ 
is defined by 
$$
\forall i,j\in \{1,\ldots,d\},\;\; \left(M_{\varphi}\right)_{i,j}:= \vert \varphi(a_j)\vert_{a_i} \, .
$$ 
The choice of the ordering has no importance. 
If a morphic sequence is generated by a morphism $\varphi$ such that the spectral radius of $M_{\varphi}$ is equal to $\rho$, 
we say that this sequence is a \emph{$\rho$-morphic sequence}.  
It is known that a sequence is $q$-automatic if and only if it is $q$-morphic. 
A famous example of non-automatic morphic sequence is given by the so-called Fibonacci word
$$
{\boldsymbol \varphi} = 0100101001001010010100100101001001\cdots\,,
$$ 
which is defined as the unique fixed point of the morphism $\varphi$ defined over $\{0,1\}$ by 
$\varphi(0)=01$ and $\varphi(1)=0$. This word is $(1+\sqrt 5)/2$-morphic. 
Quite recently, Durand \cite{Du11} prove the following nice generalization of Cobham's theorem 
that was open for a while: if $\rho_1$ and $\rho_2$ are multiplicatively independent algebraic numbers, 
a sequence that is both $\rho_1$- and $\rho_2$-morphic is eventually periodic.  
With an infinite word ${\bf a}=a_0a_1\cdots$ over a finite alphabet, we can associate the generating function  
$$
f_{\bf a}:=\sum_{n=0}^{\infty} a_nz^n\,.
$$
Furthermore, ${\bf a}$ is eventually periodic if and only if $f_{\bf a}$ is a rational function. 
In the vein of Problems \ref{conj: strongf} and \ref{conj: strongf2}, we expect that Durand's theorem can be strengthened  as follows.  

\begin{conj}
Let $r\geq 2$ be an integer. Let $\rho_1,\ldots,\rho_r$ be pairwise multiplicatively independent algebraic numbers, 
and, for every $i$, $1\leq i \leq r$, let  ${\bf a}_i$ be a $\rho_i$-morphic word that is not eventually periodic.   
Then, the generating functions $f_{{\bf a}_1}(z),\ldots,f_{{\bf a}_r}(z)$ are algebraically independent over $\Q(z)$. 
\end{conj}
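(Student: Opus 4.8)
The plan is to deduce the conjecture from Theorem~\ref{th:algebraicindependancefunctions} by exhibiting each generating function $f_{{\bf a}_i}(z)$ as a good $T_i$-Mahler specialization whose transformation matrix has spectral radius $\rho_i$. After a standard normalisation of the representing morphisms --- which changes neither the words nor, by \cite{Du11}, the spectral radii $\rho_i$ --- one applies Cobham's construction (property~(B), see \cite{Co68}) to write ${\bf a}_i$ as the image under a coding of a fixed point of a prolongable morphism $\varphi_i$ with $\rho(M_{\varphi_i})=\rho_i$, and thereby to produce an integer $n_i\geq 1$, a matrix $T_i$ with non-negative integer entries, a $T_i$-Mahler function $F_i\in\Q\{z_1,\ldots,z_{n_i}\}$, and an identity $f_{{\bf a}_i}(z)=F_i(z,\ldots,z)=F_i\circ\sigma_i(z)$ with $\sigma_i\colon z\mapsto(z,\ldots,z)$. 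Here $T_i$ has spectral radius $\rho_i$, and since ${\bf a}_i$ is not eventually periodic one may arrange that $\rho_i>1$ and that $T_i$ belongs to the class $\mathcal M$; as the $\rho_i$ are pairwise multiplicatively independent, so are the $\rho(T_i)$.

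It then remains to verify that $F_i\circ\sigma_i$ is a \emph{good} $T_i$-Mahler specialization in the sense of Definition~\ref{def:goodspecialization}: (a) $F_i$ is regular singular; (b) there is a punctured neighbourhood $\mathcal V_i$ of $0$ in $\Q$ with $(T_i,\sigma_i(\xi))$ admissible for all $\xi\in\mathcal V_i$; and (c) $\sigma_i(\xi)$ is regular for all $\xi\in\mathcal V_i$. Conditions (b) and (c) I expect to be the easy part: for $|\xi|$ small the diagonal point $(\xi,\ldots,\xi)$ lies in $\mathcal U(T_i)$, its forward orbit $T_i^k\sigma_i(\xi)$ tends to $0$ because $T_i$ is expanding, and the polynomial conditions cutting out non-admissibility or non-regularity meet the diagonal in a proper Zariski closed subset, hence exclude only finitely many small $\xi$ --- the same mechanism as in the proof of Lemma~\ref{lem:exspe}. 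Granting (a)--(c), the function $F_i\circ\sigma_i=f_{{\bf a}_i}$ is irrational (an infinite word has a rational generating function if and only if it is eventually periodic), so Theorem~\ref{th:algebraicindependancefunctions} applies and gives the algebraic independence of $f_{{\bf a}_1}(z),\ldots,f_{{\bf a}_r}(z)$ over $\Q(z)$.

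The crux, and the reason the statement remains a conjecture, is condition~(a): the multivariate Mahler function $F_i$ produced by Cobham's construction need not be regular singular, precisely as the one-variable generating functions of automatic sequences need not be. This is the very obstruction that leaves Problems~\ref{conj: strongf}, \ref{conj: weakv}, \ref{conj: strongv} and \ref{conj: strongf2} open, and there is no reason for it to disappear when one passes to morphic words. Removing the hypothesis would require either proving that every morphic (resp.\ automatic) generating function admits a regular singular Mahler representation --- which seems too optimistic in general --- or extending the lifting and purity theorems of \cite{AF3} beyond the regular singular case, presumably through the difference-Galois approach of Sch\"afke and Singer \cite{SS1,SS2} that already bypasses Cobham's theorem in the proof of Theorem~\ref{thm: ABe}. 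Short of such an input, the argument above proves the conjecture for every family of morphic words whose associated multivariate Mahler functions happen to be regular singular, and, via Theorem~\ref{thm:main} and Corollary~\ref{coro:function}, for every family of automatic words with regular singular one-variable generating functions.
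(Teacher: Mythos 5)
The statement you were asked to prove is stated in the paper as a \emph{conjecture}, and the paper offers no proof of it: immediately after stating it, the authors only remark that Theorem \ref{th:algebraicindependancefunctions} gives a first general result \emph{towards} it, because Cobham's construction realizes $f_{{\bf a}_i}$ as $F_i(z,\ldots,z)$ for a $T_i$-Mahler function $F_i$ with $\rho(T_i)=\rho_i$, but that these specializations ``are not always good'' in the sense of Definition \ref{def:goodspecialization}. Your proposal correctly identifies this state of affairs, reproduces the intended conditional reduction, and honestly flags regular singularity of $F_i$ as the obstruction that keeps the statement conjectural. To that extent your account matches the paper's own discussion.

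Two of your intermediate claims are nonetheless wrong or unjustified. First, you assert that since ${\bf a}_i$ is not eventually periodic ``one may arrange that $T_i$ belongs to the class $\mathcal M$.'' The paper's Example \ref{ex:nonex} contradicts this: the fixed point $\boldsymbol{\mathfrak n}_2$ of $\varphi(0)=02$, $\varphi(1)=02112$, $\varphi(2)=0212$ comes from a primitive (hence non-eventually-periodic) morphism whose incidence matrix $T_5$ has spectral radius $>1$ but admits $1$ as an eigenvalue, so $T_5\notin\mathcal M$; membership in $\mathcal M$ is a second independent obstruction, on the same footing as regular singularity, and cannot be ``arranged.'' Second, you dismiss conditions (b) and (c) of Definition \ref{def:goodspecialization} as ``the easy part'' handled by proper Zariski-closed exceptional sets. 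Regularity (condition (c)) is indeed of that nature, but admissibility of $(T_i,\sigma_i(\xi))$ requires the diagonal point $(\xi,\ldots,\xi)$ to be $T_i$-independent, which is a condition on multiplicative relations, not a Zariski-closed one: for $T=q{\rm I}_d$ with $d\geq 2$ (the automatic case) the coordinates of the diagonal point are multiplicatively dependent and $T$-independence fails for \emph{every} $\xi$, whereas for the Fibonacci matrix it holds for every $\xi$ in the punctured unit disk. Whether the diagonal is admissible thus depends delicately on $T_i$ and must be checked case by case, as the paper does in Examples \ref{ex:fibonacci}--\ref{ex:tribonacci}. So even granting regular singularity, your reduction as written does not yield the conjecture; it yields it only for families of morphic words for which $T_i\in\mathcal M$, the diagonal points are $T_i$-independent, and the $F_i$ are regular singular --- which is precisely the scope of Theorem \ref{th:algebraicindependancefunctions} as the authors intend it.
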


Theorem \ref{th:algebraicindependancefunctions} provides a first general result towards this conjecture. 
Indeed, Cobham \cite{Co68} described how the generating function of any $\rho$-morphic word 
can be obtained as a specialization of the form $f\circ \sigma$, where $f$ is a $T$-Mahler functions in several variables 
and $\sigma(z)=(z,\ldots,z)$. Furthermore, $\rho(T)=\rho$. 
However, we stress that these specializations are not always good in the sense of Definition \ref{def:goodspecialization}, 
for it may happen that either $T$ does not belong to $\mathcal M$ or that $f$ is not regular singular. We give below a few examples. 

\begin{ex}\label{ex:bm} 
Let us consider the Baum--Sweet sequence $\boldsymbol{\mathfrak bs}$. This is a $2$-automatic sequence defined 
as follows. Its $n$th term is equal to $1$ 
 if the binary expansion of $n$ contains no block of consecutive $0$'s of odd length, and it is equal to $0$ otherwise.   
Let $\varphi$ denote the morphism defined by $\varphi(0)=01$, $\varphi(1)=21$, $\varphi(2)=13$, $\varphi(3)=33$, and $\tau$ be the 
coding defined by $\tau(0)=1$, $\tau(1)=1$, $\tau(2)=1$, $\tau(3)=0$. The sequence $\boldsymbol{\mathfrak bs}$ is also the image by $\tau$ of the unique fixed point 
of $\varphi$ beginning by $0$. 
The generating function $f_{\boldsymbol{\mathfrak bs}}(z)$ is a good $T_0$-Mahler specialization, with $T_0=(2)$, in a somewhat trivial way.  
Indeed, $f_{\boldsymbol{\mathfrak bs}}$ is a regular singular $2$-Mahler function, as we have 
$$
\left(\begin{array}{c} f_{\boldsymbol{\mathfrak bs}}(z) \\ f_{\boldsymbol{\mathfrak bs}}(z^2) \end{array}\right) 
=
\left(\begin{array}{cc} 0 & 1 \\ 1 & -z \end{array}\right)
\left(\begin{array}{c} f_{\boldsymbol{\mathfrak bs}}(z^2) \\ f_{\boldsymbol{\mathfrak bs}}(z^4) \end{array}\right) \,.
$$
Furthermore, every $\alpha\in\Q$, with $0<\vert \alpha\vert<1$, is regular. 
\end{ex}

\begin{ex}\label{ex:fibonacci} 
Let us consider the Fibonacci word $\boldsymbol \varphi$. 
Setting $\z=(z_0,z_1)$,  Cobham's construction leads to the $T_1$-Mahler system 
\begin{equation}
\label{eq:MahlerFib}
\left(\begin{array}{c} f_0(\z) \\ f_1(\z) \end{array}\right)
=
A(\z)
\left(\begin{array}{c} f_0(T_1\z) \\ f_1(T_1\z)  \end{array}\right)\, ,
\end{equation}
where
$$
A(\z) = \left(\begin{array}{cc} 
1&1
\\
z_0& 0
\end{array}\right)\, \mbox{ and } T_1=\left(\begin{array}{ccc} 1 & 1 \\ 1 & 0\end{array}\right)\, .
$$
According to Cobham, we get that $f_{\boldsymbol \varphi}(z)=f_1(z,z)$.  
Let us show that the system \eqref{eq:MahlerFib} is regular singular (even if the matrix $A({\bf 0})$ is singular). 
Setting 
$$
\Phi(\z) = \left(\begin{array}{cc} f_0(\z) & \frac{1}{z_0z_1} - \frac{1}{z_0} \\ f_1(\z) & \frac{1}{z_1}-\frac{1}{z_0z_1} \end{array}\right)
$$
and
$$
B= \left(\begin{array}{cc} 1 & 0 \\ 0 & - 1 \end{array}\right) \, ,
$$
we obtain that
$$
\Phi(\z) B = A(\z)\Phi(T_1\z) \,.
$$
Furthermore, $\det \Phi(\z)\not=0$ for it has a nonzero coefficient in $(z_0z_1)^{-1}$ 
in its generalized Laurent series expansion. 
It follows that the system \eqref{eq:MahlerFib} is regular singular.  
Furthermore, we note that $T_1$ belongs to the class $\mathcal M$.  
Finally, for every algebraic number $\alpha$ in the punctured open unit disk of $\mathbb C$, the point 
$(\alpha,\alpha)$ is regular and $T_1$-independent. 
Hence, $f_{\boldsymbol \varphi}(z)$ is a good $T_1$-Mahler specialization. 
We also note that $\rho(T_1)=(1+\sqrt 5)/2$.
\end{ex}

\begin{ex}\label{ex:morph} 
Let  $\varphi$ denote the binary morphism defined by $\varphi(0)=0110$ and $\varphi(1)=101$, 
and let 
$$
{\boldsymbol{\mathfrak w}}= 011010110101101010110101101\cdots
$$ 
denote 
the unique fixed point of $\varphi$ beginning with $0$.   
Setting $\z=(z_0,z_1)$,  Cobham's construction leads to the  $T_2$-Mahler system 
\begin{equation}
\label{eq:MahlerW}
\left(\begin{array}{c} f_0(\z) \\ f_1(\z) \end{array}\right)
=
A(\z)
\left(\begin{array}{c} f_0(T_2\z) \\ f_1(T_2\z)  \end{array}\right)\, ,
\end{equation}
where
$$
A(\z) = \left(\begin{array}{cc} 
1+z_0z_1^2&z_1
\\
z_0+z_0z_1& 1+z_0z_1
\end{array}\right)\, \mbox{ and } T_2=\left(\begin{array}{ccc} 2 & 1 \\ 2 & 2\end{array}\right)\, .
$$
According to Cobham, we get that $f_{\boldsymbol{\mathfrak w}}(z)=f_1(z,z)$. 
Furthermore, we note that $T_2$ belongs to the class $\mathcal M$ and that the system \eqref{eq:MahlerW} 
is regular singular for $A({\bf 0})$ is the $2\times 2$ identity matrix.  
Finally, for every algebraic number $\alpha$ in the punctured open unit disk of $\mathbb C$, the point 
$(\alpha,\alpha)$ is regular and $T_2$-independent. Hence, 
 $f_{\boldsymbol{\mathfrak w}}(z)$ is a good $T_2$-Mahler specialization. 
 We also note that $\rho(T_2)=2+\sqrt 2$. 
\end{ex}

\begin{ex}\label{ex:tribonacci}
Let us give an example over a $3$-letters alphabet. Let 
$$
{\boldsymbol{\mathfrak{tr}}}=0102010010201010201001020102\cdots
$$ 
denote the Tribonacci word, that is the unique fixed point of the morphism $\varphi$ defined by 
$\varphi(0)=01$, $\varphi(1)=02$, and $\varphi(2)=0$. 
Setting $\z=(z_0,z_1,z_2)$,  Cobham's construction leads to the  $T_3$-Mahler system 
\begin{equation}
\label{eq:MahlersystemTribonacci}
\left(\begin{array}{c} f_0(\z) \\ f_1(\z) \\ f_2(\z) \end{array}\right)
=
A(\z)
\left(\begin{array}{c} f_0(T_3\z) \\ f_1(T_3\z) \\ f_2(T_3\z) \end{array}\right)\, ,
\end{equation}
where
$$
A(\z):=\left(\begin{array}{ccc} 1 & 1 & 1 \\ z_0 & 0 & 0 \\ 0 & z_0 & 0 \end{array}\right)\, .
$$
and 
$$
T_3=\left(\begin{array}{ccc} 1 & 1 & 0 \\ 1 & 0 & 1 \\ 1 & 0 & 0 \end{array}\right)\, .
$$
According to Cobham, we get that $f_{\boldsymbol{\mathfrak{tr}}}(z)=f_1(z,z,z)+2f_2(z,z,z)$.  
Let us show that the system \eqref{eq:MahlersystemTribonacci} is regular singular. 
We first consider the inhomogeneous Mahler equation 
\begin{equation}
\label{eq:Tribonacci-h}
\begin{split}
h(\z) = &  \bar{j} h(T\z) + j z_0z_1 h(T^2\z) + z_0^3z_1^2z_2 h(T^3\z) 
\\  & - z_0^{1/2}z_1^{1/2}z_2^{1/2} + \bar{j} z_0^{3/2}z_1^{3/2}z_2^{1/2}  + jz_0^{5/2} z_1^{3/2} z_2^{1/2} +  z_0^{11/2}z_1^{7/2}z_2^{3/2}\, ,
\end{split}
\end{equation}
where we let $j$ denote the unique cubic root of unity with positive imaginary part. 
Equation \eqref{eq:Tribonacci-h} has a ramified analytic solution $h(\z)\in\Q[[z_0^{1/2},z_1^{1/2},z_2^{1/2}]]$, which can be obtained as 
the limit of the recurrence defined by $h_0(\z)= - z_0^{1/2}z_1^{1/2}z_2^{1/2}$, and by 
\begin{equation*}
\begin{split}
h_{n+1}(\z) =& \bar{j} h_n(T\z) + j z_0z_1 h_n(T^2\z) + z_0^3z_1^2z_2 h_n(T^3\z) 
\\  & - z_0^{1/2}z_1^{1/2}z_2^{1/2} + \bar{j} z_0^{3/2}z_1^{3/2}z_2^{1/2}  + jz_0^{5/2} z_1^{3/2} z_2^{1/2} +  z_0^{11/2}z_1^{7/2}z_2^{3/2} \, .
\end{split}
\end{equation*}
Now, let us consider the ramified Laurent polynomial 
$$
l(\z)=z_0^{-1/2}z_1^{-1/2}z_2^{-1/2} + jz_0^{-1/2}z_1^{-1/2}z_2^{1/2}  
+ \bar{j}z_0^{-1/2}z_1^{1/2}z_2^{-1/2} + z_0^{1/2}z_1^{1/2}z_2^{-1/2}\, .
$$
It satisfies the inhomogeneous Mahler equation
\begin{equation}\label{eq:Tribonacci-l}
\begin{array}{rcl} \bar{j} l(T\z) + jz_0z_1 l(T^2\z) + z_0^3z_1^2z_2 l(T^3\z) & = & 
 l(\z) - z_0^{1/2}z_1^{1/2}z_2^{1/2} + jz_0^{5/2}z_1^{3/2}z_2^{1/2}
\\ & & + \bar{j}z_0^{3/2}z_1^{3/2}z_2^{1/2} + z_0^{11/2}z_1^{7/2}z_2^{3/2}\, .
\end{array}
\end{equation}
Setting $g(\z) = l(\z) + h(\z)$, we infer from \eqref{eq:Tribonacci-h} and \eqref{eq:Tribonacci-l} that $g(\z)$ 
satisfies the homogeneous $T$-Mahler equation
\begin{equation}
\label{eq:Tribonacci-g}
g(\z) = \bar{j} g(T\z) + j z_0z_1 g(T^2\z) + z_0^3z_1^2z_2 g(T^3\z)\, .
\end{equation}
Setting 
$$
\Phi(\z) = \left(
\begin{array}{ccc}
t_0(\z) & g(\z) & \overline{g(\z)}
\\
t_1(\z) & \bar{j}z_0g(T\z) & jz_0\overline{g(T\z)}
\\
t_2(\z) & j z_0^2z_1 g(T^2\z) & \bar{j}z_0^2z_1\overline{g(T^2\z)}
\end{array} \right) \, ,
$$
and
$$
B=\left(
\begin{array}{ccc}
1& 0& 0
\\
0& j & 0
\\
0 & 0 & \bar{j}
\end{array} \right) \, ,
$$
we then infer from \eqref{eq:Tribonacci-g} and \eqref{eq:MahlersystemTribonacci} that 
$$
\Phi(\z)B=A(\z)\Phi(T\z) \, .
$$
Furthermore, we can check that $\det \Phi(\z)\not=0$ for it has a nonzero coefficient in 
$(z_0z_1z_2)^{-1}$ in its generalized Laurent series expansion. 
Since $B$ is a constant matrix and since $\Phi$ has coefficients in 
$\Q\{z_0^{1/2},z_1^{1/2},z_2^{1/2}\}$, this shows that the Mahler system 
\eqref{eq:MahlersystemTribonacci} is regular singular. 
Furthermore, $T_3$ belongs to the class $\mathcal M$.  
Finally, for every algebraic number $\alpha$ in the punctured open unit disk of $\mathbb C$, the point 
$(\alpha,\alpha,\alpha)$ is regular and $T_3$-independent. 
Hence, $f_{\boldsymbol{\mathfrak{tr}}}(z)$ is a good $T_3$-Mahler specialization. 
We also note that 
$\rho(T_3)=\left(1+{\sqrt[{3}]{19+3{\sqrt {33}}}}+{\sqrt[{3}]{19-3{\sqrt {33}}}}\right)/3$  
is the Tribonacci number, that is the unique real root of the polynomial $x^3-x^2-x-1$.  
\end{ex}

\begin{ex}\label{ex:nonex} 
Let us also add two non-examples. 

\begin{itemize}

\medskip

\item[$\bullet$] Let $\boldsymbol{\mathfrak n}_1$ denote the unique fixed point beginning with $0$ 
of the morphism 
$\varphi$ defined by $\varphi(0)=012$, $\varphi(1)=12$, 
and $\varphi(2)=2$. According to Cobham's construction, we obtain that the generating function  
$f_{\boldsymbol{\mathfrak n}_1}(z)$ is a specialization 
of a $T_4$-Mahler function where 
$$
T_4=\left(\begin{array}{ccc} 1 & 0& 0\\ 1 & 1 & 0 \\ 1 & 1 & 1 \end{array}\right)\, .
$$
However, since $\rho(T_4)=1$, the matrix $T_4$ does not belong to $\mathcal M$ and we cannot conclude 
that $f_{\boldsymbol{\mathfrak n}_1}(z)$ is a good 
Mahler specialization. 

\medskip

\item[$\bullet$] Let $\boldsymbol{\mathfrak n}_2$ denote the unique fixed point of the morphism 
$\varphi$ defined by $\varphi(0)=02$, $\varphi(1)=02112$, 
and $\varphi(2)=0212$. According to Cobham's construction, 
we obtain that the generating function $f_{\boldsymbol{\mathfrak n}_2}(z)$ is a specialization 
of a $T_5$-Mahler function where 
$$
T_5=\left(\begin{array}{ccc} 1 & 1 & 1\\ 0 & 2 & 1 \\ 1 & 2 & 2 \end{array}\right)\, .
$$
The spectral radius of $T_5$ is larger than $1$ for $T_5$ is primitive, but $1$ is an eigenvalue of $T_5$. 
 Hence, $T_5$ does not belong to $\mathcal M$ and we cannot conclude that 
 $f_{\boldsymbol{\mathfrak n}_2}(z)$ is a good Mahler specialization. 
\end{itemize}
\end{ex}

As an illustration of Theorem \ref{th:algebraicindependancefunctions}, we deduce the following result. 

\begin{prop}
The power series $f_{\boldsymbol{\mathfrak bs}}(z)$, $f_{\boldsymbol \varphi}(z)$, 
$f_{\boldsymbol{\mathfrak w}}(z)$, and $f_{\boldsymbol{\mathfrak{tr}}}(z)$ 
are algebraically independent over $\Q(z)$.  
\end{prop}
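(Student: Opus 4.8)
The plan is to obtain the proposition as a direct application of Theorem~\ref{th:algebraicindependancefunctions} with $r=4$. By Examples~\ref{ex:bm}, \ref{ex:fibonacci}, \ref{ex:morph} and~\ref{ex:tribonacci}, the four power series $f_{\boldsymbol{\mathfrak bs}}(z)$, $f_{\boldsymbol \varphi}(z)$, $f_{\boldsymbol{\mathfrak w}}(z)$ and $f_{\boldsymbol{\mathfrak{tr}}}(z)$ are good $T_0$-, $T_1$-, $T_2$- and $T_3$-Mahler specializations respectively, where $T_0=(2)$ and $T_1,T_2,T_3\in\mathcal M$ are the matrices exhibited there, with spectral radii $\rho(T_0)=2$, $\rho(T_1)=\tfrac{1+\sqrt5}{2}$, $\rho(T_2)=2+\sqrt2$ and $t:=\rho(T_3)$ the Tribonacci constant, the real root of $x^3-x^2-x-1$. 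None of these series is a rational function, since the generating series of a word over a finite alphabet is rational if and only if the word is eventually periodic, and none of the Baum--Sweet sequence, the Fibonacci word, the fixed point $\boldsymbol{\mathfrak w}$, the Tribonacci word is eventually periodic: for the last three the incidence matrices are the primitive matrices $T_1,T_2,T_3$, whose dominant eigenvalue is a quadratic or cubic irrational, so the letter frequencies (the normalised Perron eigenvector) are irrational, whereas an eventually periodic word has rational letter frequencies; aperiodicity of the Baum--Sweet sequence is classical. So Theorem~\ref{th:algebraicindependancefunctions} applies as soon as one checks that $2$, $\tfrac{1+\sqrt5}{2}$, $2+\sqrt2$ and $t$ are \emph{pairwise} multiplicatively independent, and this is essentially the only content of the proof.

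I would reduce this verification to two elementary observations. First, the fields $\mathbb Q(\sqrt5)$, $\mathbb Q(\sqrt2)$ and $\mathbb Q(t)$ intersect pairwise in $\mathbb Q$: the first two are distinct quadratic fields, while $[\mathbb Q(t):\mathbb Q]=3$ is coprime to $2$, so a common subfield of $\mathbb Q(t)$ and $\mathbb Q(\sqrt d)$ has degree dividing $\gcd(3,2)=1$. Second, for $\eta\in\{\tfrac{1+\sqrt5}{2},\,2+\sqrt2,\,t\}$ and any nonzero integer $a$ one has $\eta^{a}\notin\mathbb Q$. For $\eta=\tfrac{1+\sqrt5}{2}$, whose Galois conjugate is $-\eta^{-1}$, the relation $\eta^{a}\in\mathbb Q$ gives $\eta^{a}=(-1)^{a}\eta^{-a}$, hence $\eta^{2a}=(-1)^{a}$, which is impossible for $a\neq0$ since $\eta^{2a}>0$. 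For $\eta=2+\sqrt2$, applying the nontrivial automorphism of $\mathbb Q(\sqrt2)$ to $\eta^{a}\in\mathbb Q$ yields $(2+\sqrt2)^{a}=(2-\sqrt2)^{a}$, hence $(3+2\sqrt2)^{a}=1$ after dividing, forcing $a=0$. For $\eta=t$, the two non-real conjugates $\mu,\bar\mu$ of $t$ satisfy $t\,|\mu|^{2}=t\mu\bar\mu=1$, so $|\mu|=t^{-1/2}$; then $t^{a}\in\mathbb Q$ would give $t^{a}=\mu^{a}=|\mu|^{a}=t^{-a/2}$, hence $a=0$.

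Granting these, suppose $\rho(T_i)^{a}=\rho(T_j)^{b}$ for some $i<j$ in $\{0,1,2,3\}$ and integers $a,b$; I must show $a=b=0$. If $i=0$, then $\rho(T_j)^{b}=2^{a}\in\mathbb Q$, so the second observation (applied to $\rho(T_j)\in\{\tfrac{1+\sqrt5}{2},2+\sqrt2,t\}$) gives $b=0$, whence $2^{a}=1$ and $a=0$. If $i\geq1$, then $\rho(T_i)$ and $\rho(T_j)$ are irrational and generate two \emph{distinct} fields among $\mathbb Q(\sqrt5),\mathbb Q(\sqrt2),\mathbb Q(t)$; the common value $\rho(T_i)^{a}=\rho(T_j)^{b}$ lies in their intersection, which is $\mathbb Q$ by the first observation, so the second observation (applied to $\rho(T_i)$) forces $a=0$, hence $\rho(T_j)^{b}=1$, and since $\rho(T_j)>1$ is real this forces $b=0$. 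Therefore the four spectral radii are pairwise multiplicatively independent, and Theorem~\ref{th:algebraicindependancefunctions} yields the algebraic independence of $f_{\boldsymbol{\mathfrak bs}}(z)$, $f_{\boldsymbol \varphi}(z)$, $f_{\boldsymbol{\mathfrak w}}(z)$ and $f_{\boldsymbol{\mathfrak{tr}}}(z)$ over $\Q(z)$. The only mildly delicate inputs are the aperiodicity of the four words (standard) and the cubic field $\mathbb Q(t)$; everything else is bookkeeping, so I do not expect a genuine obstacle here.
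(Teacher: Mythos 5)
Your proposal is correct and follows the paper's own route exactly: irrationality of the four series via aperiodicity of the underlying words, then Theorem \ref{th:algebraicindependancefunctions} applied to the good Mahler specializations of Examples \ref{ex:bm}, \ref{ex:fibonacci}, \ref{ex:morph}, and \ref{ex:tribonacci}. The only difference is that you supply full verifications of the pairwise multiplicative independence of the spectral radii $2$, $(1+\sqrt5)/2$, $2+\sqrt2$, and the Tribonacci constant (and of the aperiodicity via irrational letter frequencies), which the paper simply asserts; your arguments for these are sound.
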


\begin{proof}
As already mentioned in Examples \ref{ex:bm} ,\ref{ex:fibonacci}, \ref{ex:morph}, and \ref{ex:tribonacci}, 
these generating functions are good Mahler specializations. 
Since none of the words $\boldsymbol{\mathfrak bs}$, $\boldsymbol{ \varphi}$, ${\boldsymbol{\mathfrak w}}$, and 
${\boldsymbol{\mathfrak{tr}}}$ is eventually periodic, 
the generating funcitons $f_{\boldsymbol{\mathfrak bs}}(z)$, $f_{\boldsymbol \varphi}(z)$, 
$f_{\boldsymbol{\mathfrak w}}(z)$, and $f_{\boldsymbol{\mathfrak{tr}}}(z)$ 
are all irrational.  Furthermore, the corresponding spectral radii are $2$, $(1+\sqrt 5)/2$, $2+\sqrt 2$, and 
$\left(1+{\sqrt[{3}]{19+3{\sqrt {33}}}}+{\sqrt[{3}]{19-3{\sqrt {33}}}}\right)/3$. 
These numbers are pairwise multiplicatively independent. 
By Theorem \ref{th:algebraicindependancefunctions}, it follows that 
$f_{\boldsymbol{\mathfrak bs}}(z)$, $f_{\boldsymbol \varphi}(z)$, $f_{\boldsymbol{\mathfrak w}}(z)$, and $f_{\boldsymbol{\mathfrak{tr}}}(z)$ are algebraically independent over $\Q(z)$.   
\end{proof}

\section{Application to Hecke--Mahler series}\label{sec: HeckeMahler}

In this section, we prove Theorem \ref{th:HeckeMahler}, as well as two complementary results about values of 
Hecke--Mahler series. 
Let $\omega$ be a quadratic irrational real number. As already mentioned in the introduction, 
the values of the Hecke--Mahler series   
$$
f_{\omega}(z)=\sum_{n=0}^\infty \lfloor n\omega \rfloor z^n
$$
can be obtained as values of a $T$-Mahler function in two variables. The underlying transformation $T$ is related 
to the continued fraction expansion of the parameter $\omega$. Mahler \cite{Ma29} uses this fact to prove that, for all algebraic number 
$\alpha$, $0<\vert \alpha\vert<1$, the number $f(\omega,\alpha)$ is transcendental. When considering values of Hecke--Mahler 
series at different algebraic points, there are two main results due to Nishioka \cite{Ni94} and Masser \cite{Mas99}.

\begin{thm}[Ku. Nishioka, 1994]
\label{th:Nishioka}
Let $\omega_{1},\ldots,\omega_{r}$ be quadratic irrational real numbers such that the quadratic fields 
$\mathbb Q(\omega_1),\ldots,\mathbb Q(\omega_r)$ are all distinct. Let $\alpha$ be an algebraic number, 
with $0<|\alpha|<1$, and let $t_1,\ldots,t_r$ be positive integers. For every $i$, $1\leq i \leq r$, set $\alpha_i:=\alpha^{t_i}$. 
Then the numbers $f_{\omega_1}(\alpha_i),\ldots,f_{\omega_r}(\alpha_r)$ are algebraically independent over $\Q$.
\end{thm}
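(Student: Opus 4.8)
The plan is to realise each $f_{\omega_i}(\alpha_i)$ as a value of a two-variable regular singular Mahler system and then feed the whole family into the first purity theorem, the required multiplicative independence being supplied by the distinctness of the quadratic fields. First I would recall, following Mahler \cite{Ma29} and in the language of Definition \ref{def:goodspecialization}, the analytic structure of $f_\omega$ for a quadratic irrational $\omega$: one has $f_\omega(z)=F_\omega(z,1)$ with $F_\omega(z_1,z_2)=\sum_{n_1\ge 0}\sum_{n_2=0}^{\lfloor n_1\omega\rfloor}z_1^{n_1}z_2^{n_2}$, and $F_\omega$ is a \emph{regular singular} $T_\omega$-Mahler function in two variables, where $T_\omega\in\M$ is a $2\times 2$ non-negative integer matrix built from the eventually periodic continued fraction expansion of $\omega$; moreover $(T_\omega,(\gamma,1))$ is admissible and $(\gamma,1)$ is regular with respect to the associated system for every algebraic $\gamma$ with $0<|\gamma|<1$. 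In other words, $f_\omega$ is a good $T_\omega$-Mahler specialization via $\sigma\colon z\mapsto(z,1)$. I expect that establishing this — verifying regular singularity, that $T_\omega\in\M$, and the admissibility and regularity of the points $(\gamma,1)$ — is where the real work lies; the rest is a formal application of the purity theorems.

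The arithmetic heart of the argument is the claim that $\rho(T_{\omega_1}),\dots,\rho(T_{\omega_r})$ are pairwise multiplicatively independent. Since $T_\omega$ is built from continued-fraction matrices $\left(\begin{smallmatrix}a&1\\1&0\end{smallmatrix}\right)$ with $a\ge 1$, it lies in ${\rm GL}_2(\Z)$, so its dominant eigenvalue $\rho(T_\omega)$ is a unit in the ring of integers of a real quadratic field, namely $\mathbb Q(\omega)$ (immediate from the continued fraction expansion of $\omega$ defining $T_\omega$); and $\rho(T_\omega)>1$ because $T_\omega\in\M$. Suppose now that $\rho(T_{\omega_i})^a\rho(T_{\omega_j})^b=1$ with $(a,b)\ne(0,0)$, say $a\ne 0$. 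Then $\rho(T_{\omega_i})^a=\rho(T_{\omega_j})^{-b}$ lies in $\mathbb Q(\omega_i)\cap\mathbb Q(\omega_j)=\mathbb Q$ (two distinct quadratic fields meet in $\mathbb Q$); being a quotient of units it is a rational unit, hence $\pm 1$, contradicting $\rho(T_{\omega_i})>1$. This proves the claim.

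Finally, I would form the block diagonal $T$-Mahler system with $T={\rm diag}(T_{\omega_1},\dots,T_{\omega_r})$ and evaluate it at $\bbeta=\big((\alpha^{t_1},1),\dots,(\alpha^{t_r},1)\big)$. For each $i$ the point $(\alpha^{t_i},1)\in(\Q^\star)^2$ is regular for its block and $(T_{\omega_i},(\alpha^{t_i},1))$ is admissible, by the good-specialization property applied to $\gamma=\alpha^{t_i}$ (note $0<|\alpha^{t_i}|<1$); together with the previous paragraph, the hypotheses of the first purity theorem (Theorem 2.4 of \cite{AF3}) are met. With $\mathcal E_i:=\{F_{\omega_i}(\alpha^{t_i},1)\}=\{f_{\omega_i}(\alpha_i)\}$ and $\mathcal E:=\bigcup_{i=1}^r\mathcal E_i$, it yields
$$
{\rm Alg}_{\Q}(\mathcal E)=\sum_{i=1}^r{\rm Alg}_{\Q}(\mathcal E_i\mid\mathcal E).
$$
By Mahler's transcendence theorem \cite{Ma29}, each $f_{\omega_i}(\alpha_i)$ is transcendental (as $\alpha_i=\alpha^{t_i}$ is algebraic with $0<|\alpha_i|<1$), so ${\rm Alg}_{\Q}(\mathcal E_i\mid\mathcal E)=\{0\}$ for every $i$, whence ${\rm Alg}_{\Q}(\mathcal E)=\{0\}$; this is precisely the algebraic independence of $f_{\omega_1}(\alpha_1),\dots,f_{\omega_r}(\alpha_r)$ over $\Q$. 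One may note that the statement is also the special case $m_1=\dots=m_r=1$, $\alpha_{i,1}=\alpha^{t_i}$, of Theorem \ref{th:HeckeMahler}, and in particular requires only the first purity theorem, not the second.
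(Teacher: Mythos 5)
Your argument is sound, but you should be aware that the paper does not actually prove this statement: it is quoted as a theorem of Ku.\ Nishioka \cite{Ni94} and serves only as a benchmark for the generalization, Theorem \ref{th:HeckeMahler}. What you have written is, in effect, the specialization of the paper's proof of Theorem \ref{th:HeckeMahler} to the case of one evaluation point per quadratic field, and in that case your route is both legitimate and genuinely leaner. The paper's proof of the general statement must first pass through Lemma \ref{lem:HMindependentpoints} (built on Lemmas 3.3 and 7.3 of \cite{Mas99} and Lemma 3 of \cite{LvdP78}) to re-encode several points $\alpha_{i,1},\ldots,\alpha_{i,m_i}$ as a single $T_i$-independent point $(\beta_{i,1},1,\ldots,\beta_{i,s_i},1)$ with multiplicatively independent $\beta_{i,j}$, and it must then invoke Masser's Theorem \ref{th:Masser1} to settle the within-field independence; with $m_i=1$ both steps collapse, exactly as you observe, and only Mahler's transcendence theorem \cite{Ma29} is needed at the end. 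Your verification that $\rho(T_{\omega_1}),\ldots,\rho(T_{\omega_r})$ are pairwise multiplicatively independent is the same unit argument the paper uses in the proof of Theorem \ref{th:HeckeMahler} (a unit of infinite order in a real quadratic field determines that field), and your phrasing via $\mathbb Q(\omega_i)\cap\mathbb Q(\omega_j)=\mathbb Q$ is if anything cleaner than the paper's. Two small caveats. First, the analytic input you flag — that $F_\omega$ is a regular singular $T_\omega$-Mahler function with $T_\omega\in\M$, and that $(T_\omega,(\gamma,1))$ is admissible with $(\gamma,1)$ regular for $0<|\gamma|<1$ — is indeed where the substance lies; the paper itself only asserts these facts (in Lemma \ref{lem:HMindependentpoints} and the remark closing Section \ref{sec: HeckeMahler}), importing them from \cite{Ma29} and \cite{Mas99}, so you are on the same footing as the authors but should cite Masser's Lemma 3.2 explicitly for the $T_\omega$-independence of $(\gamma,1)$. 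Second, $F_\omega(z,1)$ equals $f_\omega(z)$ only modulo a rational function (the inner sum contributes an extra $1/(1-z)$), which is harmless for algebraic independence but worth stating as a congruence mod $\Q(z)$, as the paper does in Lemma \ref{lem:HMindependentpoints}.
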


\begin{thm}[Masser, 1999]\label{th:Masser1}
Let $\omega$ be a quadratic irrational real number and let $\alpha_1,\ldots,\alpha_m$ be algebraic numbers 
with $0<|\alpha_1|,\ldots,|\alpha_m| < 1$. Then $f_{\omega}(\alpha_i),\ldots,f_{\omega}(\alpha_m)$ are
algebraically independent over $\Q$ if and only if $\alpha_1,\ldots,\alpha_m$ are distinct.
\end{thm}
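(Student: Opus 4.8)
The plan is to derive Masser's theorem from the second purity theorem (Theorem~\ref{thm: purete2}), using that the single Hecke--Mahler function $f_\omega$ is a good Mahler specialization of a two-variable regular singular Mahler function. Note first that Masser's statement is the case $r=1$ of Theorem~\ref{th:HeckeMahler}; I shall describe how it follows directly from the second purity theorem alone, the full statement additionally using the first purity theorem to separate the distinct quadratic fields, exactly as in the proof of Theorem~\ref{thm:main}. The ``only if'' implication is trivial: if $\alpha_i=\alpha_j$ for some $i\neq j$ then $f_\omega(\alpha_i)=f_\omega(\alpha_j)$, so the values are $\Q$-linearly, hence algebraically, dependent. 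So I assume $\alpha_1,\dots,\alpha_m$ pairwise distinct and prove that $f_\omega(\alpha_1),\dots,f_\omega(\alpha_m)$ are algebraically independent over $\Q$.

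\emph{Set-up.} As recalled in the introduction and in the opening of Section~\ref{sec:spe}, one has $f_\omega(z)=F_\omega(z,1)$, and $f_\omega$ is a good $T_\omega$-Mahler specialization via $\sigma(z)=(z,1)$ in the sense of Definition~\ref{def:goodspecialization}, where $F_\omega$ is a regular singular $T_\omega$-Mahler function and $T_\omega\in\M$ is a $2\times 2$ matrix attached to the eventually periodic continued fraction expansion of $\omega$, with dominant eigenvalue $\rho_\omega>1$ a quadratic irrational. In particular, for every algebraic $\alpha$ with $0<\vert\alpha\vert<1$, the pair $(T_\omega,(\alpha,1))$ is admissible and $(\alpha,1)$ is regular with respect to the $T_\omega$-Mahler system defining $F_\omega$. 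Since all $m$ occurrences of $\omega$ coincide, I form the block-diagonal system made of $m$ disjoint copies of this $T_\omega$-system, in variables $\z_i=(z_{i,1},z_{i,2})$, $1\le i\le m$, with global transformation $T={\rm diag}(T_\omega,\dots,T_\omega)$, arranged so that the first coordinate of the $i$-th block is $F_\omega(\z_i)$. Evaluated at $\balpha_i:=(\alpha_i,1)$, that coordinate takes the value $F_\omega(\alpha_i,1)=f_\omega(\alpha_i)$. I then take $\mathcal E_i:=\{f_\omega(\alpha_i)\}$ and $\mathcal E:=\{f_\omega(\alpha_1),\dots,f_\omega(\alpha_m)\}$.

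\emph{Applying the second purity theorem.} Hypothesis (i) of Theorem~\ref{thm: purete2} holds by the previous paragraph, and the matrices $T_i=T_\omega$ all have spectral radius $\rho_\omega$. The substantive point is hypothesis (ii): that $\balpha:=(\balpha_1,\dots,\balpha_m)$ is $T$-independent. The orbit of $\balpha_i$ under $T_\omega$ is the sequence $\bigl(\alpha_i^{p_k},\alpha_i^{q_k}\bigr)_{k\ge0}$, where $(p_k,q_k)$ is the first column of $T_\omega^{k}$; since $T_\omega$ is primitive with $\rho_\omega\notin\Q$, one has $p_k,q_k\to\infty$ and $p_k/q_k$ converges to an irrational number --- the slope of the dominant eigendirection of $T_\omega$, which is built from $\omega$ and hence irrational since $\omega\notin\Q$. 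I would then show, by the analysis of such persistent multiplicative relations along the orbit --- which exploits the quadratic irrationality of $\omega$ in an essential way and is where Masser's original argument concentrates its effort --- that a nontrivial relation within a single block is impossible (so each $(\alpha_i,1)$ is itself $T_\omega$-independent, $\alpha_i$ not being a root of unity), while a relation genuinely coupling two blocks $i\neq j$ can occur only when those blocks coincide, that is, only when $\alpha_i=\alpha_j$ --- which is excluded by hypothesis. Hence $\balpha$ is $T$-independent, and Theorem~\ref{thm: purete2} yields ${\rm Alg}_{\Q}(\mathcal E)=\sum_{i=1}^{m}{\rm Alg}_{\Q}(\mathcal E_i\mid\mathcal E)$.

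\emph{Conclusion and main obstacle.} By Mahler's theorem \cite{Ma29}, each $f_\omega(\alpha_i)=F_\omega(\alpha_i,1)$ is transcendental over $\Q$; consequently ${\rm Alg}_{\Q}(\mathcal E_i\mid\mathcal E)=\{0\}$, since no nonzero polynomial in $m$ variables can vanish after substituting a transcendental number for its $i$-th variable and leaving the others free. Therefore ${\rm Alg}_{\Q}(\mathcal E)=\{0\}$, that is, $f_\omega(\alpha_1),\dots,f_\omega(\alpha_m)$ are algebraically independent over $\Q$, which completes the non-trivial direction. The step I expect to be the main obstacle is the verification of $T$-independence: translating the elementary hypothesis ``the $\alpha_i$ are pairwise distinct'' into the precise notion of $T$-independence from the first part \cite{AF3}, and checking that no degenerate orbit configuration survives once coincidences among the $\alpha_i$ are forbidden --- the essential ingredient there being that $\omega\notin\Q$. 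A more routine point is to justify that the Hecke--Mahler system for $F_\omega$ is regular singular and that $(\alpha,1)$ is always an admissible regular point, for which one may rely on the good-specialization discussion of Section~\ref{sec:spe}.
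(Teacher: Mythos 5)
First, a point of record: the paper does not prove Theorem~\ref{th:Masser1} at all --- it is quoted from Masser's 1999 paper \cite{Mas99} and used as an \emph{input} in the proof of Theorem~\ref{th:HeckeMahler} (``But by Theorem~\ref{th:Masser1}, we already know\dots''). So there is no internal proof to compare against, and your attempt to rederive it from the purity machinery must stand on its own.

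It does not, and the failure is exactly at the step you flag as the main obstacle. You claim that a persistent multiplicative relation coupling two blocks $i\neq j$ of the point $\balpha=\bigl((\alpha_1,1),\ldots,(\alpha_m,1)\bigr)$ can occur only when $\alpha_i=\alpha_j$. This is false: take $\alpha_1=1/2$ and $\alpha_2=1/4$. These are distinct, but $\alpha_1^2\alpha_2^{-1}=1$, and since both blocks carry the \emph{same} matrix $T_\omega$, this relation propagates along the whole orbit: if the first column of $T_\omega^k$ is $(p_k,q_k)$, then $(\alpha_1^{p_k})^2(\alpha_2^{p_k})^{-1}=(\alpha_1^2\alpha_2^{-1})^{p_k}=1$ for every $k$. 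Hence $\balpha$ is not $T$-independent whenever the $\alpha_i$ are multiplicatively dependent, hypothesis (ii) of Theorem~\ref{thm: purete2} fails, and your argument collapses precisely in the cases that make Masser's theorem hard (distinct but multiplicatively dependent points). The paper's own Lemma~\ref{lem:HMindependentpoints}, which packages Masser's Lemmas 3.3 and 7.3, shows what is actually needed: one writes $\xi_i=\zeta_i M_i(\beta_1,\ldots,\beta_s)$ with $\beta_1,\ldots,\beta_s$ multiplicatively independent, replaces the $m$ evaluations of the single function $F_\omega$ at different points by evaluations of $m$ \emph{different} functions $F_1,\ldots,F_m$ at the single $T$-independent point $\bbeta=(\beta_1,1,\ldots,\beta_s,1)$, and then must prove the algebraic independence of $F_1,\ldots,F_m$ over $\Q(\z)$ before the lifting theorem can be invoked. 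That functional independence is the genuinely hard content of Masser's proof (his vanishing-theorem analysis); the purity theorems cannot substitute for it, because after the reduction there is only one block and nothing left to ``purify''. In short: the ``only if'' direction and the transcendence input are fine, but the reduction of distinctness to $T$-independence is wrong, and repairing it forces you back into Masser's original argument.
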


Thus, Theorem \ref{th:HeckeMahler} is a generalization of both Theorem \ref{th:Nishioka} and Theorem \ref{th:Masser1}.
When considering the values of Hecke--Mahler series at a single point, Masser \cite{Mas99} obtained a complete result. 

\begin{thm}[Masser, 1999]\label{th:Masser2}
Let $\omega_1,\ldots,\omega_r$ be quadratic irrational real numbers, and let $\alpha$ be an algebraic number 
with $0<|\alpha| < 1$. Then $f_{\omega_1}(\alpha),\ldots,f_{\omega_r}(\alpha)$ are algebraically independent 
over $\Q$ if and only if $\pm \omega_1,\ldots,\pm \omega_n$ are distinct modulo the rational integers.
\end{thm}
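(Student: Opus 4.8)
This part is elementary and unrelated to Mahler's method. Suppose $\omega_i \equiv \omega_j \pmod{\mathbb Z}$ for some $i \neq j$, say $\omega_i = \omega_j + n$ with $n \in \mathbb Z$. From $\lfloor k(\omega_j+n)\rfloor = \lfloor k\omega_j\rfloor + kn$ one gets $f_{\omega_i}(z) = f_{\omega_j}(z) + n z/(1-z)^2$, hence $f_{\omega_i}(\alpha) - f_{\omega_j}(\alpha) \in \Q$. If instead $\omega_i \equiv -\omega_j \pmod{\mathbb Z}$, the identity $\lfloor -x\rfloor = -\lfloor x\rfloor - 1$ (valid for $x \notin \mathbb Z$, hence for $x = k\omega_j$ with $k\geq 1$) gives in the same way $f_{\omega_i}(z) + f_{\omega_j}(z) \in \Q(z)$, so $f_{\omega_i}(\alpha)$, $f_{\omega_j}(\alpha)$ and $1$ are $\Q$-linearly dependent. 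In either case $f_{\omega_1}(\alpha),\ldots,f_{\omega_r}(\alpha)$ are algebraically dependent over $\Q$, which gives the "only if" implication.

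\textbf{Setup and reduction to a single quadratic field.} For the converse I would use, following Mahler \cite{Ma29}, that $f_\omega(z) = F_\omega(z,1)$, where $F_\omega(z_1,z_2)$ is a regular singular $T_\omega$-Mahler function in two variables with $T_\omega \in \M$ built from the eventually periodic continued fraction of $\omega$, where $\rho(T_\omega)$ is a power of the fundamental unit of the real quadratic field $\mathbb Q(\omega)$, and where $(T_\omega,(\alpha,1))$ is admissible and $(\alpha,1)$ is regular for the associated system, for every algebraic $\alpha$ with $0<|\alpha|<1$ (this underlies Mahler's transcendence theorem for $f_\omega$). Group the indices according to the field $\mathbb Q(\omega_i)$. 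If $\mathbb Q(\omega_i)\neq\mathbb Q(\omega_j)$, a relation $\rho(T_{\omega_i})^a=\rho(T_{\omega_j})^b$ would put a non-rational unit into $\mathbb Q(\omega_i)\cap\mathbb Q(\omega_j)=\mathbb Q$, which is impossible; so the spectral radii attached to distinct fields are multiplicatively independent. Iterating each two-variable system so that the spectral radii within a fixed field coincide, and combining, for each field, the iterated systems into one block-diagonal system, the first purity theorem applies and reduces the problem (exactly as in the proof of Theorem \ref{thm:main}) to one field at a time; since every $f_{\omega_i}(\alpha)$ is transcendental (Mahler \cite{Ma29}), it suffices to treat, separately, a family $\omega_1,\ldots,\omega_r$ lying in a single real quadratic field $K$ with $\pm\omega_1,\ldots,\pm\omega_r$ distinct modulo $\mathbb Z$.

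\textbf{The one-field case.} Here the second purity theorem is of no help: the point $(\alpha,1,\ldots,1)$ one is forced to use is visibly not $T$-independent, since the coordinates $1$ are roots of unity. Instead, my plan is to argue directly with the lifting theorem. After passing to a common period of the continued fractions, assemble the two-variable systems of the $F_{\omega_i}$ into a single regular singular Mahler system in the variables $(z,w_1,\ldots,w_r)$ that shares the coordinate $z$ and carries one coordinate $w_i$ per index, with transformation $T$ of block "arrowhead" shape whose diagonal blocks are the (iterated) $T_{\omega_i}$ and which has a common spectral radius along the $z$-line. Since each $\omega_i$ is irrational, $f_{\omega_i}$, hence $F_{\omega_i}(z,w_i)$, is not a rational function, so $F_{\omega_i}(z,w_i)$ is transcendental over $\Q(z,w_i)$; as the blocks of variables $w_1,\ldots,w_r$ are pairwise disjoint, specializing the $w_j$ with $j\neq i$ to generic algebraic values shows that $F_{\omega_1}(z,w_1),\ldots,F_{\omega_r}(z,w_r)$ are algebraically independent over $\Q(z,w_1,\ldots,w_r)$. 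The lifting theorem would then transfer this to values: any algebraic relation over $\Q$ among the numbers $F_{\omega_i}(\alpha,1)$ lifts to an algebraic relation among the functions $F_{\omega_i}(z,w_i)$ over $\Q(z,w_1,\ldots,w_r)$, and there is none. Because $f_{\omega_i}(\alpha)=F_{\omega_i}(\alpha,1)-\tfrac{1}{1-\alpha}$ and $\tfrac{1}{1-\alpha}\in\Q$, this yields the algebraic independence of $f_{\omega_1}(\alpha),\ldots,f_{\omega_r}(\alpha)$ over $\Q$.

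\textbf{The main obstacle.} Everything therefore hinges on checking, for every algebraic $\alpha$ with $0<|\alpha|<1$, that the pair $(T,(\alpha,1,\ldots,1))$ is admissible and that the point $(\alpha,1,\ldots,1)$ is regular with respect to the combined arrowhead system, so that the lifting theorem truly applies. This is precisely where the hypothesis that $\pm\omega_1,\ldots,\pm\omega_r$ are distinct modulo $\mathbb Z$ must be invoked, and the necessity part above shows that it cannot be dropped: a coincidence $\omega_i\equiv\pm\omega_j\pmod{\mathbb Z}$ identifies the $i$-th and $j$-th blocks and drags the "diagonal" point into a degenerate position, at which the conclusion of the lifting theorem would be false (as witnessed by the explicit identities). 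Carrying out this verification rests on Mahler's explicit form of the two-variable Hecke--Mahler systems and on a combinatorial analysis of how distinct continued fraction data interact along the diagonal, and it must be done with \emph{no} exceptional $\alpha$ in order to match the clean statement; for this last, delicate input one may appeal to Masser's analysis in \cite{Mas99}. Granting it, the argument above is complete.
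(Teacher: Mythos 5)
The paper does not prove this statement: Theorem \ref{th:Masser2} is quoted as a result of Masser \cite{Mas99} and is used as an external input (for instance in the proofs of Theorems \ref{th:HeckeMahler} and \ref{th:HM2fonctions}). So there is no in-paper proof to compare with, and your attempt has to stand on its own. Your necessity argument and your reduction to a single quadratic field via the first purity theorem are fine (the latter matches what the paper does for its own Hecke--Mahler theorems), but the one-field case contains a genuine gap.

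The gap is the claim that $F_{\omega_1}(z,w_1),\ldots,F_{\omega_r}(z,w_r)$ are algebraically independent over $\Q(z,w_1,\ldots,w_r)$ because ``the blocks of variables $w_1,\ldots,w_r$ are pairwise disjoint''. This inference is invalid: the functions all share the variable $z$, and individual transcendence of each $F_{\omega_i}$ over $\Q(z,w_i)$ says nothing about joint relations carried by $z$ (compare $g(z)+w_1$ and $g(z)+w_2$, each transcendental over the relevant subfield yet algebraically dependent). Moreover, your argument for this step makes no use of the hypothesis that $\pm\omega_1,\ldots,\pm\omega_r$ are distinct modulo $\Z$; you invoke that hypothesis only to secure admissibility and regularity of the evaluation point. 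That is the wrong place. In the standard architecture (Masser's, reproduced in the paper's Lemma \ref{lem:HMindependentpoints}) all the functions attached to a single quadratic field live in the \emph{same} pair of variables and satisfy equations for the \emph{same} transformation $T$, and the point $(\beta,1)$ \emph{is} $T$-independent by \cite[Lemma 3.2]{Mas99} --- so admissibility is not where the hypothesis bites. The hypothesis is needed to prove the functional algebraic independence itself: one assumes a dependence, applies \cite[Theorem 2]{LvdP77-3} to produce a nontrivial $\Q$-linear combination lying in $\Q(\z)$, translates this into an eventual periodicity statement for sequences built from the fractional parts $\{k\omega_i/t_i\}$, and contradicts it using \cite[Lemma 4.1]{Mas99} and \cite[Lemma 8.1]{Mas99} together with the distinctness of the $\pm\omega_i$ modulo $\Z$; this is exactly the route the paper takes for $r=2$ in its proof of Theorem \ref{th:HM2fonctions}. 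Finally, your closing appeal to ``Masser's analysis in \cite{Mas99}'' for the remaining delicate input is circular, since the theorem being proved is Masser's.
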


Remark \ref{rem:contre-exemple_Hecke-Mahler} shows that a full generalization of Theorems \ref{th:Masser1} and \ref{th:Masser2} 
cannot hold true. However, in addition to Theorem \ref{th:HeckeMahler}, 
we can generalize Masser's theorems in two different ways. The first one deals with the nature of the possible relationships 
between values of Hecke-Mahler series.

\begin{thm}\label{th:HMnaturerelations}
Let $\omega_1,\ldots,\omega_r$ be quadratic irrational real numbers, and let $\alpha_1,\ldots,\alpha_r$ 
be distinct algebraic numbers with $0<|\alpha_1|,\ldots,|\alpha_r| < 1$. 
Then $f_{\omega_{1}}(\alpha_1),\ldots,f_{\omega_r}(\alpha_r)$ are algebraically dependent over $\Q$ 
if and only if $1,f_{\omega_{1}}(\alpha_1),\ldots,f_{\omega_r}(\alpha_r)$ are linearly dependent over $\Q$.
\end{thm}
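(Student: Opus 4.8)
The implication ``linearly dependent $\Rightarrow$ algebraically dependent'' is immediate, since a nontrivial $\Q$-linear relation among $1,f_{\omega_1}(\alpha_1),\ldots,f_{\omega_r}(\alpha_r)$ is in particular a nontrivial polynomial relation among the $f_{\omega_i}(\alpha_i)$. So the plan is to prove the converse: assuming that $f_{\omega_1}(\alpha_1),\ldots,f_{\omega_r}(\alpha_r)$ are algebraically dependent over $\Q$, I would produce a nontrivial $\Q$-linear relation among $1$ and these numbers. I would first recall Mahler's construction: for each $i$ one has $f_{\omega_i}(z)=F_{\omega_i}(z,1)$ for a regular singular $T_{\omega_i}$-Mahler function $F_{\omega_i}(z_1,z_2)$ with $T_{\omega_i}\in\mathcal M$, and this is a good $T_{\omega_i}$-Mahler specialization, so that $(T_{\omega_i},(\alpha_i,1))$ is admissible and $(\alpha_i,1)$ is regular with respect to the underlying system. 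Moreover $\rho(T_{\omega_i})$ is a power of a unit larger than $1$ of $\mathbb Q(\omega_i)$; in particular $\rho(T_{\omega_i})$ and $\rho(T_{\omega_j})$ are multiplicatively independent whenever $\mathbb Q(\omega_i)\neq\mathbb Q(\omega_j)$, since a nontrivial multiplicative relation would place a nontrivial power of $\rho(T_{\omega_i})$ in $\mathbb Q(\omega_i)\cap\mathbb Q(\omega_j)=\mathbb Q$ while remaining a unit, hence equal to $\pm1$, a contradiction; whereas they are multiplicatively dependent whenever $\mathbb Q(\omega_i)=\mathbb Q(\omega_j)$, being powers of a common fundamental unit up to sign.

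The first step is to separate the quadratic fields. I would partition $\{1,\ldots,r\}$ into classes according to the field $\mathbb Q(\omega_i)$ and, inside each class, iterate the underlying Mahler systems so that all the spectral radii within a class coincide; I would then form, for each class $C_k$, the block-diagonal system collecting the coordinate vectors of the $F_{\omega_i}$, $i\in C_k$, in disjoint sets of variables, and set $\mathcal E_k=\{f_{\omega_i}(\alpha_i):i\in C_k\}$ and $\mathcal E=\bigcup_k\mathcal E_k$. Since the systems attached to distinct classes have pairwise multiplicatively independent spectral radii, the first purity theorem (Theorem~2.4 of \cite{AF3}) applies and yields
\[
{\rm Alg}_{\Q}(\mathcal E)=\sum_k{\rm Alg}_{\Q}(\mathcal E_k\mid\mathcal E)\,.
\]
As the left-hand ideal is nonzero by hypothesis, some $\mathcal E_k$ is algebraically dependent over $\Q$; since $\mathcal E_k\subseteq\mathcal E$, it is enough to exhibit a nontrivial $\Q$-linear relation among $1$ and the elements of that $\mathcal E_k$. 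This reduces the theorem to the case where all $\omega_i$ lie in a single real quadratic field $K$ (now with possibly repeated $\omega_i$, while the $\alpha_i$ stay distinct).

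This single-field case is the heart of the matter, and the purity theorems are of no help there: there is no separation of spectral radii, and the point $((\alpha_i,1))_i$ is not $T$-independent (for instance because of the repeated coordinates equal to $1$), so Theorem~\ref{thm: purete2} does not apply either. The idea is to normalize the parameters. The identities $f_{\omega+k}(z)=f_\omega(z)+k\,z/(1-z)^2$ and $f_{k-\omega}(z)=-f_\omega(z)+k\,z/(1-z)^2-z/(1-z)$ for $k\in\mathbb Z$, together with the Mahler-type identity $f_{m\omega}(z^m)=\tfrac1m\sum_{\zeta^m=1}f_\omega(\zeta z)$ for $m\ge1$, are all affine-linear over $\Q(z)$; using them I would rewrite each $f_{\omega_i}(\alpha_i)$ as a $\Q$-affine-linear combination of finitely many values $f_\mu(\gamma)$, where the $\mu$ range over a fixed finite set of primitive quadratic irrationals of $K$ (pairwise inequivalent modulo $\pm1$ and the integers) and the $\gamma$ are obtained from the $\alpha_i$ by multiplying by roots of unity and extracting roots, with the $\gamma$ attached to a fixed $\mu$ pairwise distinct. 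I would then prove that these reduced values $f_\mu(\gamma)$ are algebraically independent over $\Q$: for a fixed $\mu$ and varying arguments this is Masser's Theorem~\ref{th:Masser1}; for several inequivalent $\mu$'s at a common argument it is Masser's Theorem~\ref{th:Masser2}; and the mixed configuration would be handled by combining these inputs with the lifting theorem applied to the sub-systems for which the points are now admissible and sufficiently spread out. Granting this, every algebraic relation among the $f_{\omega_i}(\alpha_i)$ is forced to be a consequence of the affine-linear normalization identities, which delivers the desired $\Q$-linear relation among $1,f_{\omega_1}(\alpha_1),\ldots,f_{\omega_r}(\alpha_r)$.

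The main obstacle is precisely this single-field step. Because the purity machinery gives nothing there, one must genuinely control the relations: this means making the parameter normalization fully explicit, and — the delicate point — verifying that after twisting the base points by roots of unity and extracting roots the resulting configuration becomes admissible, so that the lifting theorem and Masser's two theorems can be combined to certify that no relation beyond the affine-linear ones already produced can survive.
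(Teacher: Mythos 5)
The easy direction and your first reduction are fine and match the paper: the first purity theorem (via the multiplicative independence of the spectral radii attached to distinct quadratic fields) isolates a subset of indices with a common field $\mathbb Q(\omega_i)$ on which the algebraic dependence persists. The problem is your treatment of the single-field case, which you correctly identify as the heart of the matter but do not actually prove. Your plan is to normalize the parameters and then show that the reduced values $f_\mu(\gamma)$ are \emph{algebraically independent} over $\Q$. In the mixed configuration (several pairwise inequivalent $\mu$'s evaluated at several distinct points of the same quadratic field) this independence statement is strictly stronger than both of Masser's theorems, is not in the cited literature, and is not something the theorem requires; the phrase ``would be handled by combining these inputs with the lifting theorem'' is where the argument stops. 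The lifting theorem only converts a numerical relation into a functional relation over $\Q(\z)$; it gives no mechanism for ruling out or classifying such functional relations, so it cannot by itself certify independence. (Note also that the paper proves the two-value case of this mixed independence separately, as Theorem \ref{th:HM2fonctions}, by a nontrivial argument -- a sign that the general statement is not a formal consequence of the quoted inputs.)

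The ingredient you are missing is \cite[Theorem 2]{LvdP77-3}. The paper's route in the single-field case is: Lemma \ref{lem:HMindependentpoints} (Masser's normalization) packages all the functions into regular singular Mahler functions $F_1(\z),\ldots,F_\ell(\z)$ satisfying \emph{inhomogeneous order-one} equations $F_i(\z)=F_i(T\z)+R_i(\z)$ with $R_i\in\Q(\z)$, for a \emph{common} $T\in\mathcal M$ and a \emph{common} regular, $T$-independent, admissible point $\bbeta$ with $F_i(\bbeta)=f_{\omega_i}(\alpha_i)\bmod\Q$ -- this already resolves the admissibility obstruction you flag for multiplicatively dependent $\alpha_i$. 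The lifting theorem then turns the assumed algebraic dependence of the values into algebraic dependence of $F_1,\ldots,F_\ell$ over $\Q(\z)$, and the Loxton--van der Poorten theorem says that algebraically dependent solutions of such inhomogeneous order-one equations admit a nontrivial $\Q$-linear combination lying in $\Q(\z)$. Specializing at $\bbeta$ gives exactly the linear dependence of $1,f_{\omega_1}(\alpha_1),\ldots,f_{\omega_\ell}(\alpha_\ell)$. In short: one does not need to prove that relations are absent after normalization, only that any functional relation among these particular functions is forced to be linear -- and that is a known structure theorem, not something recoverable from Masser's Theorems \ref{th:Masser1} and \ref{th:Masser2} plus lifting alone. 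Without it, your single-field step is a genuine gap.
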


The second one provides a complete picture when considering only two values of 
Hecke-Mahler series.  

\begin{thm}
\label{th:HM2fonctions}
Let $\omega_1$, $\omega_2$ be quadratic irrational real numbers, and $\alpha_1$, $\alpha_2$ be 
nonzero algebraic numbers with $0<|\alpha_1|,|\alpha_2| < 1$. The numbers $f_{\omega_1}(\alpha_1)$ and $f_{\omega_2}(\alpha_2)$ 
are algebraically dependent over $\Q$ if and only if $\alpha_1=\alpha_2$ and $\omega_1 = \pm \omega_2\, \mod\Z$.
\end{thm}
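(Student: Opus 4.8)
The plan is to treat the two implications separately, splitting the main (``only if'') direction according to whether $\alpha_1=\alpha_2$. The ``if'' direction is an elementary computation. Suppose $\alpha_1=\alpha_2=:\alpha$ and $\omega_1\equiv\pm\omega_2\pmod\Z$. If $\omega_1=\omega_2+k$ with $k\in\Z$, then $\lfloor n\omega_1\rfloor=\lfloor n\omega_2\rfloor+nk$ for every $n$, so $f_{\omega_1}(z)-f_{\omega_2}(z)=k\sum_{n\geq1}nz^n=kz/(1-z)^2\in\Q(z)$; hence $f_{\omega_1}(\alpha)-f_{\omega_2}(\alpha)$ is algebraic and the two values are algebraically dependent over $\Q$. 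If $\omega_1=-\omega_2+k$, then, $\omega_2$ being irrational, $\lfloor n\omega_1\rfloor=-\lfloor n\omega_2\rfloor-1+nk$ for $n\geq1$, whence $f_{\omega_1}(z)+f_{\omega_2}(z)\in\Q(z)$ and one concludes in the same way; these are exactly the relations underlying Remark \ref{rem:contre-exemple_Hecke-Mahler}.

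For the ``only if'' direction, assume $f_{\omega_1}(\alpha_1)$ and $f_{\omega_2}(\alpha_2)$ are algebraically dependent over $\Q$. If $\alpha_1=\alpha_2=:\alpha$, apply Masser's Theorem \ref{th:Masser2} with $r=2$: it says that $f_{\omega_1}(\alpha),f_{\omega_2}(\alpha)$ are algebraically independent over $\Q$ if and only if $\pm\omega_1,\pm\omega_2$ are distinct modulo $\Z$; since they are not, we obtain $\omega_1\equiv\pm\omega_2\pmod\Z$, which is the desired conclusion. It remains to show that if $\alpha_1\neq\alpha_2$ then $f_{\omega_1}(\alpha_1)$ and $f_{\omega_2}(\alpha_2)$ are actually algebraically independent, contradicting the dependence assumption. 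When $\Q(\omega_1)\neq\Q(\omega_2)$ this is immediate from Theorem \ref{th:HeckeMahler} (with $r=2$, $m_1=m_2=1$), so we may assume $\Q(\omega_1)=\Q(\omega_2)=:K$.

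In this remaining case the plan is to run the two-variable Mahler machinery. Recall, as recalled at the start of Section \ref{sec: HeckeMahler}, that $f_{\omega_i}(z)=F_{\omega_i}(z,1)$, where $F_{\omega_i}(z_{i,1},z_{i,2})$ is a regular singular $T_i$-Mahler function in two variables, the matrix $T_i\in\mathcal M$ being read off the eventually periodic continued fraction expansion of $\omega_i$, and — this is precisely the statement that $f_{\omega_i}$ is a \emph{good $T_i$-Mahler specialization} in the sense of Definition \ref{def:goodspecialization} — the pair $(T_i,(\alpha_i,1))$ is admissible and $(\alpha_i,1)$ is regular for the underlying system. Since $\omega_1,\omega_2$ lie in the same real quadratic field, the spectral radii $\rho(T_1),\rho(T_2)$ are both positive powers of the fundamental unit of $K$, hence multiplicatively dependent; replacing $T_i$ by a suitable power $T_i^{d_i}$ (which preserves membership in $\mathcal M$, regular singularity, and admissibility/regularity of $(\alpha_i,1)$), we may assume $\rho(T_1)=\rho(T_2)$. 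Now form the block-diagonal system in the four variables $\z=(z_{1,1},z_{1,2},z_{2,1},z_{2,2})$ whose solution vector contains $F_{\omega_1}(z_{1,1},z_{1,2})$ and $F_{\omega_2}(z_{2,1},z_{2,2})$, with transformation $T={\rm diag}(T_1,T_2)$, and evaluate at $\balpha=(\alpha_1,1,\alpha_2,1)$. The key claim is that, precisely because $\alpha_1\neq\alpha_2$, the point $\balpha$ is $T$-independent. Granting this, the second purity theorem (Theorem \ref{thm: purete2}) applies with $\mathcal E_i=\{f_{\omega_i}(\alpha_i)\}$ and $\mathcal E=\mathcal E_1\cup\mathcal E_2$, and yields
$$
{\rm Alg}_{\Q}(\mathcal E)={\rm Alg}_{\Q}(\mathcal E_1\mid\mathcal E)+{\rm Alg}_{\Q}(\mathcal E_2\mid\mathcal E)=\{0\},
$$
the last equality because each $f_{\omega_i}(\alpha_i)$ is transcendental by Mahler's theorem. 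Hence $f_{\omega_1}(\alpha_1)$ and $f_{\omega_2}(\alpha_2)$ are algebraically independent over $\Q$, the desired contradiction; combining the cases finishes the proof.

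The main obstacle is the $T$-independence claim for the specialization point $\balpha=(\alpha_1,1,\alpha_2,1)$: the coordinates equal to $1$ naively obstruct any multiplicative independence, so one must argue inside the good-specialization formalism of Section \ref{sec:spe}, checking directly from the definition of $T$-independence in \cite{AF3} and the explicit shape of the continued-fraction transformations $T_1,T_2$ that no multiplicative relation among the coordinates survives along the $T$-orbit of $\balpha$ when $\alpha_1\neq\alpha_2$ — and, conversely, that such a relation does reappear when $\alpha_1=\alpha_2$, which is exactly the mechanism producing the exceptional relations of Masser's Theorem \ref{th:Masser2} and Remark \ref{rem:contre-exemple_Hecke-Mahler}. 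A minor auxiliary point is the arithmetic fact that $\rho(T_1)$ and $\rho(T_2)$ are multiplicatively dependent as soon as $\Q(\omega_1)=\Q(\omega_2)$, and multiplicatively independent otherwise, which follows from the classical correspondence between purely periodic continued fractions and units of real quadratic orders; this is what lets us switch between Theorem \ref{th:HeckeMahler} and the second purity theorem in the two sub-cases.
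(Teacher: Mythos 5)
Your reduction to the case $\Q(\omega_1)=\Q(\omega_2)$ and $\alpha_1\neq\alpha_2$ is sound, and the ``if'' direction and the sub-cases handled by Theorems \ref{th:HeckeMahler} and \ref{th:Masser2} are fine. But the central step of your remaining case --- the ``key claim'' that $\balpha=(\alpha_1,1,\alpha_2,1)$ is $T$-independent for $T={\rm diag}(T_1,T_2)$ \emph{precisely because} $\alpha_1\neq\alpha_2$ --- is false, and you cannot repair it in the way you sketch. $T$-independence is a multiplicative condition on the coordinates along the $T$-orbit; it fails whenever $\alpha_1$ and $\alpha_2$ are multiplicatively dependent and the relation is compatible with $T$. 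Take $\omega_1=\omega_2$ (so $T_1=T_2$ after your normalization) and $\alpha_2=\alpha_1^2$ with, say, $\alpha_1=1/2$: the relation $z_{1,1}^2z_{2,1}^{-1}=1$ holds at $\balpha$ and is propagated by every power of $T$, so $\balpha$ is not $T$-independent, yet Masser's Theorem \ref{th:Masser1} asserts that $f_{\omega_1}(1/2)$ and $f_{\omega_1}(1/4)$ \emph{are} algebraically independent. So the second purity theorem simply does not apply in the generic sub-case ``$\alpha_1\neq\alpha_2$ but multiplicatively dependent'', and your promised verification of $T$-independence cannot succeed there. The paper itself confirms this: the concluding remark of Section \ref{sec: HeckeMahler} reserves the second-purity argument for the case where $\alpha_1$ and $\alpha_2$ are multiplicatively \emph{independent}.

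The paper's actual route around this obstacle is different and is the real content of the proof. Via Lemma \ref{lem:HMindependentpoints} (built on Loxton--van der Poorten's Lemma 3), one writes $\alpha_i=\zeta_iM_i(\beta_1,\ldots,\beta_s)$ with multiplicatively independent $\beta_j$ and roots of unity $\zeta_i$, and re-realizes both values $f_{\omega_i}(\alpha_i)$ (modulo $\Q$) as values of regular singular $T$-Mahler functions $F_1,F_2$ in $2s$ variables at the \emph{single} $T$-independent point $\bbeta=(\beta_1,1,\ldots,\beta_s,1)$. One then proves that $F_1$ and $F_2$ are algebraically independent over $\Q(\z)$ as functions: an algebraic dependence would, by \cite[Theorem 2]{LvdP77-3}, give a $\Q$-linear combination $\lambda_1F_1+\lambda_2F_2\in\Q(\z)$, which Masser's analysis converts into a linear recurrence for a sequence built from $\lfloor k\omega_i/t_i\rfloor$ and thence into eventual periodicity of a fractional-part sequence, forcing $\lambda_1=\lambda_2=0$ (this is where the hypothesis $\omega_1\neq\pm\omega_2\bmod\Z$ is used, which is why the paper first disposes of that sub-case by reduction to Masser's theorems). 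Finally the \emph{lifting theorem} --- not the purity theorems --- transfers this functional independence to the values at $\bbeta$. In short: your architecture is right up to the last case, but the tool you invoke there (second purity at a specialization point) is the wrong one, and the missing functional-independence argument is precisely the hard part of the theorem.
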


In order to prove Theorems \ref{th:HeckeMahler}, \ref{th:HMnaturerelations}, and \ref{th:HM2fonctions}, we first need 
the following lemma that combines Lemmas 3.3 and 7.3 of \cite{Mas99}.

\begin{lem}\label{lem:HMindependentpoints}
Let $\omega_1,\ldots,\omega_r$ be quadratic irrational real numbers such that $\mathbb Q(\omega_1)=\cdots = \mathbb Q(\omega_r)$, 
and let $\alpha_1,\ldots,\alpha_r$ be algebraic numbers with $0<\vert \alpha_1\vert,\ldots \vert \alpha_r\vert<1$ . 
Then there exist multiplicatively independent algebraic numbers $\beta_1,\ldots,\beta_s$, roots of unity $\zeta_1,\ldots,\zeta_r$, a postive integer $h$, 
a matrix $T$ of class $\M$ and of size $2s$, and analytic series $F_i(\z)=F_i(z_1,\ldots,z_{2s})$, such that the following hold. 

\medskip

\begin{itemize}

\item[{\rm (i)}] $F_i(x_1,1,x_2,1,\ldots,x_s,1)=f_{\omega_i}(\zeta_i^hM_i^h(\x))\mod \mathbb Q(\x)$, 
where $M_i$ is a monomial and $\x:=(x_1,\ldots,x_s)$.

\medskip

\item[{\rm (ii)}] $F_i(\bbeta)=f_{\omega_i}(\alpha_i) \mod \Q$, where $\bbeta:= (\beta_1,1,\beta_2,1,\ldots,\beta_s,1)$. 

\medskip

\item[{\rm (iii)}] $F_i(\z) = F_i(T\z) \mod \mathbb Q(\z)$.

\medskip

\item[{\rm (iv)}] The pair $(T,\bbeta)$ is admissible and the point $\bbeta$ is regular with respect to the regular singular Mahler Equation $\rm (iii)$.

\end{itemize}
\end{lem}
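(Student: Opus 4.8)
The plan is to unpack the classical structure of the Hecke--Mahler series $f_\omega(z)=\sum_{n\ge 0}\lfloor n\omega\rfloor z^n$ and to run Masser's two lemmas (Lemmas 3.3 and 7.3 of \cite{Mas99}) in parallel for all $r$ parameters at once, exploiting that the fields $\mathbb Q(\omega_i)$ coincide. First I would recall the two-variable presentation $f_\omega(z)=F_\omega(z,1)\bmod\mathbb Q(z)$, where $F_\omega(z_1,z_2)=\sum_{n_1\ge 0}\sum_{0\le n_2\le\lfloor n_1\omega\rfloor}z_1^{n_1}z_2^{n_2}$ satisfies a $T_\omega$-Mahler equation whose matrix $T_\omega$ is built from the eventual period of the continued fraction expansion of $\omega$. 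Since $\omega_1,\dots,\omega_r$ all lie in the same real quadratic field $\mathbb Q(\sqrt d)$, their continued fraction expansions are eventually periodic with periods governed by (powers of) the fundamental unit, so after replacing each $\omega_i$ by an appropriate $\mathrm{GL}_2(\mathbb Z)$-translate and passing to a common multiple of the periods one can arrange that all the functions $F_{\omega_i}$ satisfy Mahler equations relative to one and the same transformation; this is exactly the content of Masser's Lemma 3.3, and it produces the matrix $M_i$, the shift by $h$ iterations, and the root of unity $\zeta_i$ that appear in (i) and (iii). I would state this step as a black box, citing \cite{Mas99}, and record that the resulting common matrix has class $\mathcal M$ — this uses that the relevant continued fraction matrices are primitive with spectral radius equal to (a power of) the fundamental unit, hence $>1$ with no eigenvalue a root of unity.

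Next I would assemble the single matrix $T$ of size $2s$ and the point $\bbeta$. The algebraic numbers $\alpha_1,\dots,\alpha_r$ need not be multiplicatively independent, so I would invoke Masser's Lemma 7.3: there exist multiplicatively independent $\beta_1,\dots,\beta_s$ and monomials $M_i$ in $\x=(x_1,\dots,x_s)$, together with roots of unity $\zeta_i$, such that $\alpha_i=\zeta_i^h M_i^h(\bbeta)$ for a suitable common exponent $h$; this is where the $2s$ variables come from (each $x_j$ is paired with a dummy variable specialised to $1$, matching the two-variable structure of each $F_{\omega_i}$). Substituting $x_j\mapsto\beta_j$ in (i) gives (ii). For (iii), the point is that the Mahler equation for $F_{\omega_i}$ in the variables $\z$, after the above normalisation, reads $F_i(\z)=F_i(T\z)\bmod\mathbb Q(\z)$ with the \emph{same} $T$ for every $i$ — here I would note that ``$\bmod\mathbb Q(\z)$'' absorbs the inhomogeneous rational term present in the raw Mahler equation satisfied by $F_\omega$, so that one genuinely gets a homogeneous regular singular scalar equation of the form displayed; regular singularity follows from $A(\mathbf 0)$ being (conjugate to) a unipotent or from an explicit fundamental matrix, as in the Fibonacci and Tribonacci examples of Section~\ref{sec:morph}.

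Finally, for (iv) I would verify admissibility of $(T,\bbeta)$ and regularity of $\bbeta$. Admissibility: since $\beta_1,\dots,\beta_s$ are multiplicatively independent and the dummy coordinates are $1$, the point $\bbeta$ is $T$-independent — one checks that no nontrivial monomial relation $\bbeta^{T^k\bmu}=\bbeta^{\bmu}$ can hold, using the multiplicative independence of the $\beta_j$ together with $\rho(T)>1$ so that the exponent vectors $T^k\bmu$ genuinely grow; $T$-independence for a matrix of class $\mathcal M$ implies admissibility by the definitions of \cite{AF3}. Regularity: the denominators of $A(\z)$ and the relevant determinant vanish only on a proper Zariski-closed set, and choosing the $\beta_j$ to avoid the (finitely many obstructing) values — exactly as in Lemma \ref{lem:existenceitereregulier} and \cite[Theorem 1.10]{AF1} — guarantees $\delta(T^k\bbeta)\ne 0$ for all $k\ge 1$; here the multiplicative independence of the $\beta_j$ is what lets us dodge all the forbidden subvarieties simultaneously. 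The main obstacle I anticipate is purely bookkeeping: keeping the normalisations of Lemmas 3.3 and 7.3 of \cite{Mas99} compatible — in particular making the single exponent $h$, the single size $2s$, and the single matrix $T$ work uniformly across all indices $i$, and checking that ``$\bmod\mathbb Q(\z)$'' and ``$\bmod\mathbb Q(\x)$'' are used consistently so that the homogeneity claimed in (iii) is genuine rather than an artefact of discarding terms. Once the two cited lemmas are set up with matching parameters, (i)--(iv) fall out essentially by substitution and by the general admissibility/regularity criteria already available.
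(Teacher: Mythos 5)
Your overall route is the paper's: Masser's Lemma~3.3 supplies two--variable series $G_i(z_1,z_2)$ with $G_i(z,1)=f_{\omega_i}(z^h)\bmod\Q(z)$ and a \emph{single} transformation $T_1$ (all the matrices $T(\omega_i)$ being conjugate to powers of $T_1$ because their spectral radii are multiplicatively dependent units of the common real quadratic field), the Loxton--van der Poorten decomposition $\xi_i=\zeta_iM_i(\beta_1,\ldots,\beta_s)$ of $h$-th roots of the $\alpha_i$ produces the multiplicatively independent $\beta_j$ and the $2s$ variables, and one sets $F_i(\z)=G_i(\zeta_iM_i(\x),M_i(\y))$ with $T$ a power of the $s$-fold block $T_1^{(s)}$; $T$-independence of $\bbeta$ is Masser's Lemma~3.2. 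The genuine gap is in your treatment of regularity in (iv): you propose to ``choose the $\beta_j$ to avoid the finitely many obstructing values,'' as in Lemma~\ref{lem:existenceitereregulier}. But here the $\beta_j$ are not free parameters --- they are forced by the prescribed $\alpha_i$ through $\alpha_i=\xi_i^h$ and $\xi_i=\zeta_iM_i(\beta_1,\ldots,\beta_s)$, and conclusion (ii) must hold at exactly that point $\bbeta$; any perturbation destroys (ii). The correct (and much cheaper) argument, which is the one the paper uses, is that equation (iii) is a scalar inhomogeneous equation of order one, hence corresponds to an upper triangular system with $1$'s on the diagonal, so every point of the orbit $(T^k\bbeta)_{k\geq 0}$ at which the $F_i$ and the rational inhomogeneity are defined is automatically regular; this holds because $T^k\bbeta\to 0$. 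No genericity is available or needed.

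A second, smaller point that you defer to ``bookkeeping'' is actually the one concrete normalization required for (iii): the substitution $\z\mapsto T\z$ must be compatible with the roots of unity sitting inside $F_i(\z)=G_i(\zeta_iM_i(\x),M_i(\y))$, otherwise the functional equation relates $F_i$ to a \emph{different} twist of $G_i$. The paper handles this by choosing $d$ with $\zeta_i^d=1$ for all $i$ and, using $\det T_1=1$, an exponent $\ell$ with $T_1^{\ell}\equiv I\pmod d$, then taking $T=T_1^{(s)\ell}$. Your proof should make this choice explicit rather than fold it into the general compatibility of parameters.
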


\begin{proof} 
For each quadratic irrational number $\omega$ we define, following section 3 of \cite{Mas99}, a $2\times 2$ matrix $T(\omega)$. 
This matrix belong to the class $\M$. Let $m$ be an integer, we let $T^{(m)}(\omega)$ denote 
the $m$-fold block of $T(\omega)$. Thus, we have $T^{(m)}(\omega)\in \M$. 
Furthermore, since $\mathbb Q(\omega_1)=\cdots = \mathbb Q(\omega_r)$, each pair of matrices $T(\omega_i), T(\omega_j)$ 
have multiplicatively dependent spectral radii. Hence, from \cite[Section 9]{Mas99}, it follows that each matrix $T(\omega_i)$ is 
conjugated to a positive power of $T(\omega_1)$, say $T_1$. 
Then, we infer from  \cite[Lemma 3.3]{Mas99} that there exist analytic series $G_1(z_1,z_2),\ldots,G_r(z_1,z_2)$ 
such that for every integer $i$, $1 \leq i \leq r$, one has 
\begin{equation}\label{eq:gi1}
G_i(z_1,z_2)=G_i(T_1(z_1,z_2)) \mod \Q(z_1,z_2)\, ,
\end{equation}
and
\begin{equation}\label{eq:gi2}
G_i(z,1)=f_{\omega_i}(z^h) \mod \Q(z)\, ,
\end{equation}
for some integer $h > 0$. We pick some numbers $\xi_1,\ldots,\xi_r$ such that $\xi_i^h=\alpha_i$.  
According to \cite[Lemma 3]{LvdP78} (see also \cite[section 3]{Mas99}), we can pick multiplicatively independent algebraic numbers 
$\beta_{1},\ldots,\beta_{s}$, roots of unity $\zeta_1,\ldots,\zeta_r$, and monomials $M_1,\ldots,M_r$ such that 
\begin{equation}\label{eq:mon}
\xi_i=\zeta_iM_i(\beta_1,\ldots,\beta_s)\, .
\end{equation}
Let us denote by $\x=(x_1,\ldots,x_s)$, $\y=(y_1,\ldots,y_s)$, and $\z=(x_1,y_1,\ldots,x_s,y_s)$ some vectors of indeterminates.  
We claim that the power series 
\begin{equation}\label{eq:FI}
F_i(\z):=G_i(\zeta_iM_i(\x),M_i(\y)) 
\end{equation}
satisfy the conditions of Lemma \ref{lem:HMindependentpoints}, with $T$ a positive power of $T_1^{(s)}$. 
Conditions (i) and (ii) follow from \eqref{eq:gi2}, \eqref{eq:mon}, and \eqref{eq:FI}.  
Let us show that Condition (iii) is satisfied. 
Let $d$ be a positive integer such that $\zeta_i^d=1$ for every $i$, $1\leq i\leq r$.  
Since $T_1$ has determinant $1$, there exists a positive integer $\ell$ such that $T_1^{\ell}$ is the identity matrix modulo $d$. 
Set $T=T_1^{(s)\ell}$. 
Then, we have
$$
F_i(T\z) = G_i(T(\zeta_iM_i(\x),M_i(\y))) \mod \Q(\z) \, 
$$
and \eqref{eq:gi1} implies that 
$$
F_i(T\z)=F_i(\z) \mod\Q(\z) \,.
$$  
Now, we check that Condition (iv) holds. 
By \cite[Lemma 3.2]{Mas99}, the point $\bbeta:=(\beta_1,1,\beta_2,1,\ldots,\beta_s,1)$ is $T$-independent.  
Furthermore, $T$ belongs to the class $\mathcal M$, so that the pair $(T,\bbeta)$ is admissible.  
Finally, since $F_i(\z)$ is 
well defined at $T^k\bbeta$ for every non-negative integer $k$, the point $\bbeta$ is regular with respect to the regular singular 
Mahler equation (iii). 
This ends the proof. 
\end{proof}

We are now ready to prove our three theorems on values of Hecke--Mahler series. 

\begin{proof} [Proof of Theorem \ref{th:HeckeMahler}]
Let us first fix an integer $i$, $1\leq i \leq r$. 
We infer from Lemma \ref{lem:HMindependentpoints} that there exist multiplicatively independent algebraic numbers $\beta_{i,1},\ldots,\beta_{i,s_i}$, 
roots of unity $\zeta_{i,1},\ldots,\zeta_{i,r_i}$, 
a matrix $T_i\in \M$ of size $2s_i$, and analytic series $F_{i,j}(\z)=F_{i,j}(z_1,\ldots,z_{2s_i})$, $1\leq j\leq m_i$, with the following properties. 

\medskip

\begin{itemize}

\item[{\rm (i)}] $F_{i,j}(\bbeta_i)=f_{\omega_i}(\alpha_j) \mod \Q$, where $\bbeta_i= (\beta_{i,1},1,\beta_{i,2},1,\ldots,\beta_{i,s_i},1)\in\Q^{2s_i}$. 

\medskip

\item[{\rm (ii)}] $F_{i,j}(\z) = F_{i,j}(T\z) \mod \mathbb Q(\z)$.

\medskip

\item[{\rm (iii)}] The pair $(T_i,\bbeta_i)$ is admissible and the point $\bbeta_i$ is regular with respect to the regular singular Mahler Equation $\rm (ii)$.

\end{itemize}

With the functions $F_{i,j}$, $1\leq j \leq m_i$, we can thus associate an almost diagonal Mahler system. 
The latter is regular singular for it is an upper triangular system with $1$ on the diagonal and 
the $F_{i,j}$ are analytic at the origin.  
Furthermore, according to \cite{Mas99} and \cite{Ni94}, the spectral radius of $T_i$ is a unit of  the ring of integers of $\Q(\omega_i)$.  
It follows that the spectral radii $\rho(T_1),\ldots,\rho(T_r)$ are pairwise 
multiplicatively independent.  Indeed if $\lambda_1$ and $\lambda_2$ are two quadratic irrational real numbers, and if 
$u_1$ (resp.\ $u_2$) is a unit of $\mathbb Q(\lambda_1)$ (resp.\  of $\mathbb Q(\lambda_2)$), 
then $u_1$ and $u_2$ are multiplicatively independent if and only if $\mathbb Q(\lambda_1)\neq\mathbb Q(\lambda_2)$. 
This is a consequence of the facts that all units of a real quadratic field are powers of a fundamental unit.  
We can thus apply the first purity theorem. We obtain that the numbers
$$
F_{i,j}(\bbeta_i)=f_{\omega_i}(\alpha_{i,j}),\, 1 \leq i \leq r,\, 1 \leq j \leq m_i\, ,
$$
are algebraically independent over $\Q$ if (and only if) for every integer $i$, $1 \leq i \leq r$, the numbers 
$$
F_{i,j}(\bbeta_i), 1 \leq j \leq m_i\, ,
$$
are algebraically independent over $\Q$. 
But by Theorem \ref{th:Masser1}, we already know that the numbers
$$
F_{i,j}(\bbeta_i)= f_{\omega_i}(\alpha_{i,j}),\, 1 \leq j \leq m_i\, ,
$$ 
are algebraically independent over $\Q$.   
This ends the proof. 
\end{proof}

\begin{proof}[Proof of theorem \ref{th:HMnaturerelations}]
Using Lemma \ref{lem:HMindependentpoints}, we obtain that each of the numbers $f_{\omega_{i}}(\alpha_i)$ can be obtained 
as the value of a regular singular Mahler functions $F_i(\z)$, at some admissible regular point $\bbeta_i$. 
Furthermore, these functions satisfy regular singular Mahler equations of the form
$$
F_i(\z)= F_i(T_i\z) + R_i(\z)
$$ 
where $R_i(\z)$ is a rational function. 
Now, let us assume that the numbers $f_{\omega_{1}}(\alpha_1),\ldots,f_{\omega_r}(\alpha_r)$ 
are algebraically dependent over $\Q$. 
By the first purity theorem, we obtain that there exists a subset $\mathcal I\subset \{1,\ldots,r\}$ such that 
the numbers $\{f_{\omega_{i}}(\alpha_i) \mid i\in\mathcal I\}$ are algebraically dependent over $\Q$ and such that 
$\mathbb Q(\omega_i)=\mathbb Q(\omega_j)$ for all $i,j\in \mathcal I$ . Indeed, if $\mathbb Q(\omega_i)\not=\mathbb Q(\omega_j)$, 
then, as already mentioned in the proof of Theorem \ref{th:HeckeMahler}, the spectral radii of $T_i$ and $T_j$ are multiplicatively independent. 
Without loss of generality, we assume that $\mathcal I= \{1,\ldots,\ell\}$. 
In that case, arguing as in the proof of Theorem \ref{th:HeckeMahler}, 
we can assume that $T_1=\cdots =T_\ell$ and $\bbeta_1=\cdots =\bbeta_\ell=:\bbeta$. Then we can apply the lifting theorem 
to these functions. We obtain that the functions 
$F_1(\z),\ldots,F_\ell(\z)$ are algebraically dependent over $\Q(\z)$. Then, we infer from \cite[Theorem 2]{LvdP77-3} that 
there is a $\Q$-linear combination of the $F_i(\z)$ that belongs to $\Q(\z)$. It follows that the numbers 
$$
1,F_1(\bbeta),\ldots,F_\ell(\bbeta)\, .
$$
 are linearly dependent over $\Q$. Since $F_i(\bbeta)=f_{\omega_i}(\alpha_i)$, for $1\leq i \leq \ell$, 
this ends the proof. 
\end{proof}

\begin{proof}[Proof of Theorem \ref{th:HM2fonctions}] 
We only have to prove the direct implication. 
Indeed, the converse implication is a direct consequence of either Theorem \ref{th:Masser1} or Theorem \ref{th:Masser2}. 
 The case where $Q(\omega_1)\neq Q(\omega_2)$ follows from Theorem \ref{th:HeckeMahler}. 
The case where $\alpha_1 = \alpha_2$ follows from Theorem \ref{th:Masser2}. 
Furthermore, when $\omega_1 =\pm \omega_2\, \mod \Z$, classical linear relationships between Hecke-Mahler series show that 
one can reduce the situation to the case where 
$\omega_1=\omega_2$. The latter follows from Theorem \ref{th:Masser1}. 
Finally, we can assume without any loss of generality 
that $Q(\omega_1)=Q(\omega_2)$, $\omega_1 \neq \pm \omega_2\, \mod \Z$, and  $\alpha_1\neq \alpha_2$. 

By Lemma \ref{lem:HMindependentpoints}, there exist a matrix $T\in\mathcal M$, a $T$-independent point 
$\bbeta=(\beta_1,1,\ldots,\beta_s,1)\in\Q^{2s}$, roots of unity $\zeta_1$, $\zeta_2$, a positive integer $h$, and 
regular singular $T$-Mahler functions $F_1(\z),\ F_2(\z)$ such that, for $i \in\{1,2\}$, we have 
\begin{equation}
\label{eq:HMcase2}
\begin{array}{rcl}
F_i(x_1,1,\ldots,x_r,1)&=&f_{\omega_i}(\zeta_i M_i(\x))\mod \Q(\x)\  \text{ and}
\\ F_i(\bbeta)&=&f_{\omega_i}(\alpha_i)\mod \Q\, ,
\end{array}
\end{equation}
where $M_1,\, M_2$ are monomials, and $\x=(x_1,\ldots,x_s)$\footnote{In fact, $\zeta_i$ is equal to the $\zeta_i^h$ of 
Lemma \ref{lem:HMindependentpoints}, and 
$M_i(\x)$ is equal to $M_i(\x)^h$ of Lemma \ref{lem:HMindependentpoints}.}.   
We claim that the functions $F_1$ and $F_2$ are algebraically independent over $\Q(\z)$. 
If not, we infer from \cite[Theorem 2]{LvdP77-3} that there exist two algebraic numbers $\lambda_1, \lambda_2$, not both zero, such that
$$
\lambda_1 F_1(\z) + \lambda_2F_2(\z) \in \Q(\z)\, .
$$
Then, according to \cite[Section 4]{Mas99}, Equality \eqref{eq:HMcase2} provides positive integers $t_1\leq t_2$, 
such that the sequence 
$$
a_k=\lambda_1 \left \lfloor \frac{k\omega_1}{t_1} \right\rfloor \zeta_1^{k/t_1} \mathds{1}_{t_1 \mid k} + 
\lambda_2 \left \lfloor \frac{k\omega_2}{t_2} \right\rfloor \zeta_2^{k/t_2} \mathds{1}_{t_2 \mid k} 
$$
satisfies a linear recurrence with constant coefficients. Then, \cite[Lemma 4.1]{Mas99} implies that 
the sequence 
$$
v_k=\lambda_1 \left \{ \frac{k\omega_1}{t_1} \right\} \zeta_1^{k/t_1}\mathds{1}_{t_1 \mid k} + 
\lambda_2 \left \{ \frac{k\omega_2}{t_2} \right\} \zeta_2^{k/t_2}\mathds{1}_{t_2 \mid k}
$$
is eventually periodic. Let us first assume that $t_1=t_2$. 
Let $u$ denote a positive integer such that $\zeta_1^u=\zeta_2^u=1$.  
The sequence 
$$
w_k:=v_{kut_1} = \lambda_1 \left \{ ku\omega_1 \right\} + \lambda_2 \left \{ ku\omega_2 \right\}
$$
is eventually periodic. By \cite[Lemma 8.1]{Mas99}, we obtain that $\lambda_1=\lambda_2=0$, a contradiction. 
Thus, we have $t_1 < t_2$. Choose a positive integer $r$ such that $t_2 \mid r$ and $v_{k+r}=v_k$ for $k$ large enough. 
Let $p$ be a prime number with $p > t_2$. Then, for every non-negative integer $\ell$, 
$t_2$ cannot divide $t_1p+rt_1\ell$.  
It follows that  
$$
w_\ell:=v_{t_1p+rt_1\ell} = \lambda_1 \left \{(p+r\ell)\omega_1 \right\}\zeta_1^{p+r\ell} \, .
$$ 
Since the sequence is $(w_\ell)_{\ell \geq 1}$  
is eventually periodic and $\omega_1$ is irrational, we obtain that $\lambda_1=0$. Hence, $\lambda_2=0$, a contradiction. 
Thus, the functions $F_1$ and $F_2$ are algebraically independent over $\Q(\z)$. 
We can gather the functions $F_1(\z)$ and $F_2(\z)$ into a single regular singular $T$-Mahler system. Furthermore, 
the pair $(T,\bbeta)$ is admissible and the point $\bbeta$ is regular with respect to this system. The lifting theorem implies  
 that the numbers $F_1(\bbeta)$ and $F_2(\bbeta)$ are algebraically independent. 
Then, we deduce from \eqref{eq:HMcase2} that $f_{\omega_1}(\alpha_1)$ and $f_{\omega_1}(\alpha_1)$ 
are algebraically independent. 
This ends the proof. 
\end{proof}

\begin{rem}
In the case where $\alpha_1$ and $\alpha_2$ are multiplicatively independent there is a simpler argument.   
Setting $F_i(z_1,z_2)=\sum_{n_1=0}^{\infty} \sum_{n_2=0}^{\lfloor n_1\omega_i\rfloor}z_1^{n_1}z_2^{n_2}$, $i\in\{1,2\}$,   
it follows from Mahler \cite{Ma29}  that $F_i(z_1,z_2)$ is a regular singular $T_i$-Mahler function, where $T_i$ belongs to the class $\mathcal M$, and such that  
\begin{equation}
\label{eq:HMmiseenequation}
F_i(z,1)=f_{\omega_i}(z)\,\;\; \forall i\in\{1,2\}\,.
\end{equation}
Furthermore, since $Q(\omega_1)=Q(\omega_2)$, the spectral radii $\rho(T_1)$ and $\rho(T_2)$ are multiplicatively dependent. 
Say that $\rho(T_1)^{m_1}=\rho(T_2)^{m_2}$. 
Since $\alpha_1$ and $\alpha_2$ are multiplicatively independent, the point $(\alpha_1,1,\alpha_2,1)$ is $T$-independent, 
where we let $T$ denote the diagonal block matrix whose blocks are $T_1^{m_1}$ and $T_2^{m_2}$. 
Thus, we can apply the second purity theorem, and we obtain that the numbers $F_1(\alpha_1,1)=f_{\omega_1}(\alpha_1)$ 
and $F_2(\alpha_2,1)=f_{\omega_2}(\alpha_2)$ 
are algebraically independent, as desired.
\end{rem}

\section{The matryoshka dolls principle}\label{sec:ex}

Let us consider the following general problem. Given algebraic numbers $\alpha_1,\ldots,\alpha_r$ 
and Mahler functions, or good specialization of Mahler functions,  
say $f_1(z),\ldots,f_r(z)$, we want to determine whether or not 
the numbers $f_1(\alpha_1),\ldots,f_r(\alpha_r)$ are algebraically independent over $\Q$. 
In this section, we illustrate how we can combine the two purity theorems and the lifting theorem 
to address this problem. 

Given an analytic function $f(z)$, we let $f^{(\ell)}(z)$ denote the $\ell$th derivative of the function $f(z)$. 
Let us consider the generating functions $f_{\boldsymbol{\mathfrak bs}}(z),f_{\boldsymbol \varphi}(z),  
f_{\boldsymbol{\mathfrak w}}(z)$, 
and $f_{\boldsymbol{\mathfrak{tr}}}(z)$ already introduced in Section \ref{sec:morph}.  
We also consider the generating function $f_{\boldsymbol{\mathfrak {tm}}}(z)$ of the Thue-Morse sequence 
and the generating function 
$f_{\boldsymbol{\mathfrak{pf}}}(z)$ of the regular paperfolding sequence. 
We recall that the sequence ${\boldsymbol{\mathfrak{tm}}}$ is defined in Example \ref{ex:ThueMorse}. 
Its generating function satisfies the regular singular inhomogeneous Mahler equation of order one 
\begin{equation}
\label{eq:eqtm}
f_{\boldsymbol{\mathfrak{tm}}}(z)=(1-z)f_{\boldsymbol{\mathfrak{tm}}}(z^2)+\frac{z}{1-z^2}\, \cdot
\end{equation}
The regular paperfolding sequence is yet another emblematic example of a $2$-automatic sequence.  
We recall that it can be defined as follows (see \cite[Example 5.1.6]{AS}).  
Let us take a rectangular piece of paper. Fold it in half lengthwise and then fold the result in half again. 
Keep on this procedure {\it ad infinitum}, taking care 
to make the folds always in the same direction.  
Unfolding the piece of paper and marking $1$ for the "hills", and $0$ for the "valleys",  
we obtain the regular paperfolding sequence  
$$
\boldsymbol{\mathfrak{pf}} = 11011001110010011101100011001001110\cdots\, . 
$$ 
Its generating function satisfies the regular singular inhomogeneous Mahler equation of order one 
\begin{equation}
\label{eq:paperfolding}
f_{\boldsymbol{\mathfrak{pf}}}(z)=f_{\boldsymbol{\mathfrak{pf}}}(z^2) + \frac{z}{1-z^4}\, .
\end{equation}

Now, we illustrate how to combine our three main results with the following example.  

\begin{prop}\label{prop:exemplefinal}
The numbers
$$
f_{\boldsymbol \varphi}\left(\frac{1}{2}\right), f_{\boldsymbol \varphi}\left(\frac{1}{3}\right),f_{\boldsymbol \varphi}\left(\frac{1}{5}\right),
f_{\boldsymbol{\mathfrak{tr}}}\left(\frac{1}{2}\right), f_{\boldsymbol{\mathfrak{tr}}}\left(\frac{1}{6}\right), f_{\boldsymbol{\mathfrak w}}\left(\frac{1}{3}\right),
f_{\boldsymbol{\mathfrak w}}\left(\frac{1}{7}\right) $$

$$
f_{\boldsymbol{\mathfrak{tm}}}\left(\frac{1}{2}\right), \left(f_{\boldsymbol{\mathfrak{tm}}}^{(\ell)}\left(\frac{1}{10}\right)\right)_{\ell\geq 1}, 
f_{\boldsymbol{\mathfrak{pf}}}\left(\frac{1}{2}\right), 
\left(f_{\boldsymbol{\mathfrak{pf}}}^{(\ell)}\left(\frac{1}{7}\right)\right)_{\ell\geq 1},\left(f_{\boldsymbol{\mathfrak{bs}}}^{(l)}\left(\frac{1}{3}\right)\right)_{l \geq 0}
$$
are algebraically independent over $\Q$.
\end{prop}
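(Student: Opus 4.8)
The plan is to peel the list apart one value of the spectral radius at a time: the first purity theorem separates the different spectral radii, the second purity theorem separates the points inside each layer, and the lifting theorem is invoked only at the innermost level. By Examples~\ref{ex:fibonacci}, \ref{ex:morph} and \ref{ex:tribonacci}, the functions $f_{\boldsymbol \varphi}$, $f_{\boldsymbol{\mathfrak w}}$ and $f_{\boldsymbol{\mathfrak{tr}}}$ are good $T$-Mahler specializations, with transformations in $\mathcal M$ of spectral radii $(1+\sqrt5)/2$, $2+\sqrt2$ and the Tribonacci number, while by \eqref{eq:eqtm}, \eqref{eq:paperfolding} and Example~\ref{ex:bm} the functions $f_{\boldsymbol{\mathfrak{tm}}}$, $f_{\boldsymbol{\mathfrak{pf}}}$ and $f_{\boldsymbol{\mathfrak{bs}}}$ are regular singular $2$-Mahler functions. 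Since $(1+\sqrt5)/2$, $2+\sqrt2$ and the Tribonacci number generate number fields of degrees $2$, $2$, $3$ with pairwise distinct quadratic subfields, no two of them satisfy a nontrivial multiplicative relation; and none of them has a power in $\mathbb Z$, so none is multiplicatively dependent with $2$. Thus these four numbers are pairwise multiplicatively independent, and, gathering for each of them all the systems producing the relevant values into one block-diagonal regular singular system in disjoint families of indeterminates, the first purity theorem reduces the statement to the algebraic independence over $\Q$ of the four sub-lists separately.

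For the layer attached to $(1+\sqrt5)/2$, I would view $f_{\boldsymbol \varphi}(1/2)$, $f_{\boldsymbol \varphi}(1/3)$, $f_{\boldsymbol \varphi}(1/5)$ as values of the underlying two-variable regular singular Mahler function at the diagonal points $(\alpha,\alpha)$, which by Example~\ref{ex:fibonacci} are regular and $T$-independent; as $1/2$, $1/3$, $1/5$ are pairwise multiplicatively independent, the point $\big((1/2,1/2),(1/3,1/3),(1/5,1/5)\big)$ is $T$-independent for the block-diagonal transformation, so Theorem~\ref{coro:ind_alg_1_multi_function}(i) -- equivalently the second purity theorem -- reduces algebraic independence of the three numbers to the transcendence of each one. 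The same argument settles $f_{\boldsymbol{\mathfrak{tr}}}(1/2), f_{\boldsymbol{\mathfrak{tr}}}(1/6)$ (Example~\ref{ex:tribonacci}; $2$ and $6$ multiplicatively independent) and $f_{\boldsymbol{\mathfrak w}}(1/3), f_{\boldsymbol{\mathfrak w}}(1/7)$ (Example~\ref{ex:morph}; $3$ and $7$ multiplicatively independent). Each of these seven numbers is the value, at a point of $(0,1)\cap\Q$, of a good Mahler specialization of a non-rational regular singular Mahler function, hence by Lemma~\ref{lem:exspe} (Becker's argument in this setting) it is transcendental unless the point lies in a finite exceptional set, which by \cite{AF1} is effectively computable and which contains none of $1/2$, $1/3$, $1/5$, $1/6$, $1/7$.

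There remains the layer of spectral radius $2$, which contains $f_{\boldsymbol{\mathfrak{tm}}}(1/2)$, $f_{\boldsymbol{\mathfrak{pf}}}(1/2)$ and the families $\{f_{\boldsymbol{\mathfrak{tm}}}^{(\ell)}(1/10)\}_{\ell\ge1}$, $\{f_{\boldsymbol{\mathfrak{pf}}}^{(\ell)}(1/7)\}_{\ell\ge1}$, $\{f_{\boldsymbol{\mathfrak{bs}}}^{(\ell)}(1/3)\}_{\ell\ge0}$. Fix an order bound $L$ (algebraic independence of an infinite set being that of all its finite subsets). One checks, exactly as for linear differential equations, that prolongation by $z$-derivatives of a regular singular $2$-Mahler system is again regular singular -- it stays conjugate to a constant system, now via a generalized Laurent gauge transform, even though the value at $0$ of the new matrix may become singular (cf.\ Example~\ref{ex:fibonacci}). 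So $(1,f_{\boldsymbol{\mathfrak{tm}}},\dots,f_{\boldsymbol{\mathfrak{tm}}}^{(L)})$, the analogous vector for $f_{\boldsymbol{\mathfrak{pf}}}$, $(f_{\boldsymbol{\mathfrak{bs}}},\dots,f_{\boldsymbol{\mathfrak{bs}}}^{(L)})$ and $(1,f_{\boldsymbol{\mathfrak{tm}}},f_{\boldsymbol{\mathfrak{pf}}})$ -- the last one gathering the two functions at their common point $1/2$ -- all represent solutions of regular singular $2$-Mahler systems, at the points $1/10$, $1/7$, $1/3$, $1/2$ respectively, which are regular (the matrices have poles only at roots of unity, never reached along an orbit $\alpha^{2^k}$) and for which the pairs are admissible. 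Since $1/2$, $1/10$, $1/7$, $1/3$ are pairwise multiplicatively independent, the second purity theorem splits this layer into the four point-blocks, and inside each the lifting theorem reduces algebraic independence of the values to algebraic independence over $\Q(z)$ of the functions involved. Combining the three levels of splitting, every relative ideal ${\rm Alg}_{\Q}(\,\cdot\mid\cdot\,)$ appearing vanishes, and the whole list is algebraically independent over $\Q$.

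The combinatorics with the three theorems is routine once the block-diagonal structure is in place; the real content lying outside Mahler's method is the final reduction of the previous paragraph: that $f_{\boldsymbol{\mathfrak{tm}}}(z)$ and $f_{\boldsymbol{\mathfrak{pf}}}(z)$ are algebraically independent over $\Q(z)$, and that each of $f_{\boldsymbol{\mathfrak{tm}}}(z)$, $f_{\boldsymbol{\mathfrak{pf}}}(z)$, $f_{\boldsymbol{\mathfrak{bs}}}(z)$ is hypertranscendental over $\Q(z)$ (it and all its derivatives being algebraically independent over $\Q(z)$). For this I would appeal to the known hypertranscendence theory for solutions of inhomogeneous order-one Mahler equations, e.g.\ through the parametrized difference Galois theory used in \cite{ADH}. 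The remaining items -- admissibility and regularity at each of the points, the $T$-independence of each block-diagonal combination of points, and the seven transcendence statements above at the specific rational points -- are routine verifications that nonetheless must be carried out.
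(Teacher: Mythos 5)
Your three-level architecture --- first purity theorem to separate the four spectral radii $(1+\sqrt5)/2$, $2+\sqrt2$, the Tribonacci number and $2$; second purity theorem to separate multiplicatively independent points within each layer; lifting theorem at the core --- is exactly the paper's, and the derivative-prolongation remark matches the paper's Lemma~7.2. But there is a genuine gap at the innermost level, in the Baum--Sweet block. Your proposed tool, ``hypertranscendence theory for solutions of inhomogeneous order-one Mahler equations,'' does not apply to $f_{\boldsymbol{\mathfrak{bs}}}$: unlike $f_{\boldsymbol{\mathfrak{tm}}}$ and $f_{\boldsymbol{\mathfrak{pf}}}$, it satisfies a $2$-Mahler equation of order two, with solution vector $(f_{\boldsymbol{\mathfrak{bs}}}(z),f_{\boldsymbol{\mathfrak{bs}}}(z^2))$. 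Worse, the lifting theorem applied to the derivative-prolonged system requires the algebraic independence over $\Q(z)$ of \emph{all} coordinates of that system's solution vector, i.e.\ of the two families $f_{\boldsymbol{\mathfrak{bs}}}^{(l)}(z)$ and $f_{\boldsymbol{\mathfrak{bs}}}^{(l)}(z^2)$, $l\geq 0$, jointly --- strictly more than hypertranscendence of $f_{\boldsymbol{\mathfrak{bs}}}$ alone. This is precisely why the paper must invoke the recent Theorem~4.3 of \cite{DHR}; the classical order-one results (Ke.~Nishioka \cite{Ni84} for hypertranscendence of $f_{\boldsymbol{\mathfrak{tm}}}$ and $f_{\boldsymbol{\mathfrak{pf}}}$, and \cite[Theorem 3.5]{Ni_Liv} for their mutual independence) only handle the other point-blocks of the radius-$2$ layer.

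A second, smaller gap: the transcendence of the seven values $f_{\boldsymbol\varphi}(1/2),\dots,f_{\boldsymbol{\mathfrak w}}(1/7)$ is asserted, not proved. Lemma~\ref{lem:exspe} only yields a \emph{finite} exceptional set, and your claim that it ``contains none of $1/2,1/3,1/5,1/6,1/7$'' is exactly the verification that would have to be carried out; effectivity in the sense of \cite{AF1} is stated there for one-variable Mahler functions, not for good specializations of multivariate ones. The paper avoids this entirely by quoting the combinatorial transcendence criterion of \cite{ABu07}, which applies to these morphic generating functions evaluated at $1/b$ and gives the transcendence of all the listed values outright.
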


Before proving Proposition \ref{prop:exemplefinal}, we first need the following simple lemma. 

\begin{lem}
Let $q\geq 2$ and $\ell$ be two natural numbers, and let $f(z)$ be a regular singular $q$-Mahler function. 
Then the derivative $f^{(\ell)}(z)$ is a regular singular $q$-Mahler function.
\end{lem}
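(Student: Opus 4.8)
The plan is to show that differentiation preserves the property of being the coordinate of a regular singular $q$-Mahler system by exhibiting an explicit enlarged system satisfied by the derivatives. Suppose $f(z) = f_1(z)$ is a coordinate of a solution vector $(f_1(z),\ldots,f_m(z))$ of a regular singular $q$-Mahler system $\mathbf f(z) = A(z)\mathbf f(z^q)$ with $A(z) \in \mathrm{GL}_m(\Q(z))$. Differentiating this relation $\ell$ times via the Leibniz rule produces, for each $k$ with $0 \leq k \leq \ell$, a linear relation expressing $f_j^{(k)}(z)$ in terms of the functions $(z^q)' \cdots$-weighted derivatives $f_j^{(i)}(z^q)$ for $0 \leq i \leq k$ with coefficients that are rational functions of $z$ (coming from derivatives of $A(z)$ and powers of $(z^q)' = qz^{q-1}$). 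Collecting the vector $\mathbf g(z)$ whose entries are all the $f_j^{(k)}(z)$, $1 \leq j \leq m$, $0 \leq k \leq \ell$, one obtains a new $q$-Mahler system $\mathbf g(z) = B(z)\mathbf g(z^q)$ with $B(z) \in \mathrm{GL}_{m(\ell+1)}(\Q(z))$; the matrix $B(z)$ is block lower-triangular with respect to the grading by derivative order $k$, the diagonal blocks being $q^k z^{k(q-1)} A(z)$, up to reindexing.

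**Key steps, in order.** First I would write down the Leibniz expansion of $f_j^{(\ell)}(z) = (A(z)\mathbf f(z^q))_j^{(\ell)}$ and rearrange it to isolate $\mathbf g(z)$ on the left and $\mathbf g(z^q)$ on the right, thereby identifying $B(z)$ and checking $B(z)$ is invertible over $\Q(z)$ — this follows from the block-triangular shape since each diagonal block $q^k z^{k(q-1)}A(z)$ is invertible. Second, and this is the crux, I would verify that the enlarged system is again regular singular. By definition this means there is an invertible matrix $\Phi(z)$ with coefficients in $\widehat{\mathbf K} = \cup_{d\geq 1}\Q\{z^{1/d}\}$ and a constant matrix $C$ such that $C = \Phi(z) B(z) \Phi(z^q)^{-1}$. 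Since the original system is regular singular, we have $B_0 = \Psi(z) A(z) \Psi(z^q)^{-1}$ for some such $\Psi(z)$ and constant $B_0$; the natural candidate is to take $\Phi(z)$ block lower-triangular built from $\Psi(z)$ and its derivatives (the derivatives of $\Psi(z)$ still lie in $\widehat{\mathbf K}$, which is closed under differentiation), conjugating $B(z)$ into a matrix whose diagonal blocks are $q^k z^{k(q-1)} B_0$. The remaining issue is the factor $z^{k(q-1)}$: I would absorb it by an extra diagonal gauge transform with entries $z^{c_k/d}$ for suitable rationals $c_k$, using that $z^{a}$ satisfies $z^a = z^{a(1-q)} (z^q)^a$, so that the scalar $q$-Mahler system with matrix $q^k z^{k(q-1)}$ is itself regular singular; then one checks the residual unipotent (off-diagonal) part does not obstruct regular singularity, which is standard since a block-triangular $q$-Mahler system is regular singular whenever its diagonal blocks are (this is essentially the argument already used in the proof of Theorem \ref{thm:main} when passing to the subsystem \eqref{eq:Mahleruneconjuguéreduit}).

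**Main obstacle.** The genuine work is the regular-singularity verification, in particular handling the twisting factors $q^k z^{k(q-1)}$ and confirming that one may gauge them away within $\widehat{\mathbf K}$ while keeping the off-diagonal blocks under control; everything else is bookkeeping with the Leibniz rule. An alternative, possibly cleaner, route for this step is to invoke Remark \ref{rem:ers}: it suffices to exhibit \emph{one} system containing $f^{(\ell)}(z)$ among linearly independent coordinates that is regular singular, and to check that the companion matrix of the order-$(\ell+1)$-shifted inhomogeneous equation obtained by differentiating \eqref{eq:Mahler1} has $p_0(0)p_n(0) \neq 0$ — but $z^{q}$-derivatives introduce factors of $z$, so the cleanest formulation is still the block-triangular one above. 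I would present the proof via the explicit $B(z)$ and the block-triangular gauge transform, citing the diagonal scalar case $q^k z^{k(q-1)}$ as regular singular and the stability of regular singularity under block-triangular extension.
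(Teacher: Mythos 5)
Your overall strategy coincides with the paper's: differentiate the system, obtain a block lower-triangular enlarged system whose $k$-th diagonal block is $q^kz^{k(q-1)}A(z)$, and build a block lower-triangular gauge matrix out of $\Phi(z)$, its derivatives, and a diagonal scaling by powers of $z$. Up to that point everything checks out. The gap is in the step you yourself identify as the crux: the assertion that ``a block-triangular $q$-Mahler system is regular singular whenever its diagonal blocks are'' is false for Mahler systems, so the off-diagonal part cannot be dismissed as a harmless unipotent residue. Concretely, the rank-two system with matrix $\left(\begin{smallmatrix}1&0\\ z^{-1}&1\end{smallmatrix}\right)$ (i.e.\ the inhomogeneous equation $y(z)=y(z^q)+z^{-1}$) has both diagonal blocks equal to the constant $1$, yet it is not regular singular: a gauge transformation to a constant matrix would force an identity $h(z)-h(z^q)=z^{-1}-b$ with $h$ a ramified meromorphic germ at the origin and $b$ a constant, and comparing the most negative exponents of the two sides shows that no such $h$ exists. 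What you are implicitly invoking is closure of the regular singular category under extensions, which holds for differential equations but fails here; and the argument you cite from the proof of Theorem \ref{thm:main} goes in the opposite (unproblematic) direction, extracting a regular singular subsystem from a regular singular triangular system.

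The repair is exactly the computation you deferred, and it is what the paper does: there is no residual off-diagonal part to worry about because the candidate gauge matrix conjugates the enlarged system \emph{exactly} to a constant matrix. Differentiating the gauge relation $B\Phi(z^q)=\Phi(z)A(z)$ gives $qz^{q-1}B\Phi'(z^q)=\Phi'(z)A(z)+\Phi(z)A'(z)$, and this identity is precisely what is needed to verify that
$$
\left(\begin{array}{cc} B & 0 \\ 0 & qB \end{array}\right)
\left(\begin{array}{cc} \Phi(z^q) & 0 \\ z^q\Phi'(z^q) & z^q\Phi(z^q) \end{array}\right)
=
\left(\begin{array}{cc} \Phi(z) & 0 \\ z\Phi'(z) & z\Phi(z) \end{array}\right)
\left(\begin{array}{cc} A(z) & 0 \\ A'(z) & qz^{q-1}A(z) \end{array}\right),
$$
so the system for $(\f,\f')$ is regular singular with constant matrix ${\rm diag}(B,qB)$; iterating handles all $\ell$. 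Note that the factor $z$ in the second block row plays simultaneously the role of your separate diagonal gauge $z^{c_k/d}$ and makes the off-diagonal blocks match identically, so the false extension principle is never needed.
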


\begin{proof}
By assumption, the function $f(z)$ is the first coordinate of a column vector $\f(z)$ representing a solution 
of a regular singular Mahler system
\begin{equation}
\label{eq:qMahler}
\f(z)=A(z)\f(z^q)\, .
\end{equation}
Deriving this equality, we obtain the new system
\begin{equation}
\label{eq:qMahlerderivée}
\left(\begin{array}{c}\f(z) \\ \f'(z) \end{array}\right) =
\left(\begin{array}{cc}A(z)& \boldsymbol 0 \\ A'(z) & qz^{q-1} A(z) \end{array}\right)
\left(\begin{array}{c}\f(z^q) \\ \f'(z^q) \end{array}\right)
\, .
\end{equation}
Since system \eqref{eq:qMahler} is regular singular, there exist an invertible matrix $\Phi(z)$ 
with coefficients in $\widehat{\bK}=\cup_{d\geq 1}\Q\{z^{1/d}\}$, and a constant matrix $B$, such that
\begin{equation}\label{eq:gaugetransform}
B \Phi(z^q)=\Phi(z)A(z)\, .
\end{equation}
Deriving this equality, one obtains
\begin{equation}\label{eq:gaugetransformderivée}
q z^{q-1}B\Phi'(z^q)=\Phi'(z)A(z) + \Phi(z)A'(z)\, .
\end{equation}
Hence, combining \eqref{eq:gaugetransform} and \eqref{eq:gaugetransformderivée}, 
we get that 
$$
\left(\begin{array}{cc} B & \boldsymbol 0 \\ \boldsymbol 0 & qB \end{array} \right)
\left(\begin{array}{cc} \Phi(z^q) & \boldsymbol 0 \\ z^q \Phi'(z^q) & z^q\Phi(z^q) \end{array} \right)
= \hspace{6cm}$$
$$
\hspace{5cm}\left(\begin{array}{cc} \Phi(z) & \boldsymbol 0 \\ z \Phi'(z) & z\Phi(z) \end{array} \right)
\left(\begin{array}{cc}A(z)& \boldsymbol 0 \\ A'(z) & qz^{q-1} A(z) \end{array}\right)\, .
$$
Hence, the system \eqref{eq:qMahlerderivée} is regular singular and $f'(z)$ is a regular-singular $q$-Mahler function. 
Iterating this process, we obtain that all the derivatives of $f(z)$ are regular singular $q$-Mahler function, which ends the proof. 
\end{proof}

\begin{proof}[Proof of Proposition \ref{prop:exemplefinal}]  
We first notice that each number in Proposition \ref{prop:exemplefinal} is the value at a regular point $\alpha$ of a regular singular $T$-Mahler function 
such that the pair $(T,\alpha)$ is admissible and $T\in\mathcal M$. 
For the numbers $f_{\boldsymbol \varphi}\left(\frac{1}{2}\right)$, $f_{\boldsymbol \varphi}\left(\frac{1}{3}\right)$, and  $f_{\boldsymbol \varphi}\left(\frac{1}{5}\right)$
the spectral radius of the underlying transformation $T$ is equal to $(1+\sqrt{5})/2$. 
For the numbers $f_{\boldsymbol{\mathfrak{tr}}}\left(\frac{1}{2}\right)$ and $f_{\boldsymbol{\mathfrak{tr}}}\left(\frac{1}{6}\right)$  
the spectral radius of the underlying transformation $T$ is equal to the tribonacci number.  
For the numbers $f_{\boldsymbol{\mathfrak w}}\left(\frac{1}{3}\right)$ and 
$f_{\boldsymbol{\mathfrak w}}\left(\frac{1}{7}\right)$  
the spectral radius of the underlying transformation $T$ is equal to $2+\sqrt{2}$. 
For the numbers  
 $$
 f_{\boldsymbol{\mathfrak{tm}}}\left(\frac{1}{2}\right), \left(f_{\boldsymbol{\mathfrak{tm}}}^{(\ell)}\left(\frac{1}{10}\right)\right)_{\ell\geq 1}, 
f_{\boldsymbol{\mathfrak{pf}}}\left(\frac{1}{2}\right), 
\left(f_{\boldsymbol{\mathfrak{pf}}}^{(\ell)}\left(\frac{1}{7}\right)\right)_{\ell\geq 1},\left(f_{\boldsymbol{\mathfrak{bs}}}^{(l)}\left(\frac{1}{3}\right)\right)_{l \geq 0}
 $$
the spectral radius of the underlying transformation $T$ is equal to $2$. 
Furthermore, we stress that all these numbers are transcendental. This could be proved by using Mahler's method, but this also a direct consequence 
of the work of Bugeaud and the first author (see for instance \cite[Theorem 4]{ABu07}). 

\paragraph*{Applying the first purity theorem}
Since the numbers 
$$
\frac{1+\sqrt{5}}{2}, \frac{1+{\sqrt[{3}]{19+3{\sqrt {33}}}}+{\sqrt[{3}]{19-3{\sqrt {33}}}}}{3}, 2+\sqrt{2} \text{ and }2\, ,
$$
are pairwise multiplicatively independent, we can apply the first purity theorem. We deduce that the numbers in Proposition \ref{prop:exemplefinal} are algebraically independent over $\Q$, 
if, and only if, the following properties hold. 

\medskip

\begin{itemize}
\item[({\rm a})] The numbers $f_{\boldsymbol \varphi}\left(\frac{1}{2}\right)$, $f_{\boldsymbol \varphi}\left(\frac{1}{3}\right)$, and  $f_{\boldsymbol \varphi}\left(\frac{1}{5}\right)$ 
are algebraically independent over $\Q$. 

\medskip

\item[({\rm b})] The numbers $f_{\boldsymbol{\mathfrak{tr}}}\left(\frac{1}{2}\right)$ and $f_{\boldsymbol{\mathfrak{tr}}}\left(\frac{1}{6}\right)$ 
are algebraically independent over $\Q$. 

\item[({\rm c})]  The numbers $f_{\boldsymbol{\mathfrak w}}\left(\frac{1}{3}\right)$ and 
$f_{\boldsymbol{\mathfrak w}}\left(\frac{1}{7}\right)$  
are algebraically independent over $\Q$. 

\item[({\rm d})]  The numbers 
$$
 f_{\boldsymbol{\mathfrak{tm}}}\left(\frac{1}{2}\right), \left(f_{\boldsymbol{\mathfrak{tm}}}^{(\ell)}\left(\frac{1}{10}\right)\right)_{\ell\geq 1}, 
f_{\boldsymbol{\mathfrak{pf}}}\left(\frac{1}{2}\right), 
\left(f_{\boldsymbol{\mathfrak{pf}}}^{(\ell)}\left(\frac{1}{7}\right)\right)_{\ell\geq 1},\left(f_{\boldsymbol{\mathfrak{bs}}}^{(l)}\left(\frac{1}{3}\right)\right)_{l \geq 0}
 $$
are algebraically independent over $\Q$. 
\end{itemize}

\paragraph*{Applying the second purity theorem} 
Now, we use the fact that all the numbers we consider are transcendental. 
Since the numbers $1/2$, $1/3$, and $1/5$ are multiplicatively independent, the second purity implies directly that (a) is satisfied.
Since the numbers $1/2$ and $1/6$ are multiplicatively independent, the second purity implies directly that (b) is satisfied. 
Again, since the numbers $1/3$ and $1/7$ are multiplicatively independent, the second purity implies directly that (c) is satisfied. 
Finally, since the numbers $1/2$, $1/10$, $1/3$ and $1/7$  are multiplicatively independent, the second purity implies directly that (d) is satisfied if, and only if 
the following properties hold. 

\medskip

\begin{itemize}
\item[(${\rm d}_1$)] The numbers $f_{\boldsymbol{\mathfrak{tm}}}\left(\frac{1}{2}\right)$ and $f_{\boldsymbol{\mathfrak{pf}}}\left(\frac{1}{2}\right)$ 
are algebraically independent over $\Q$. 

\medskip

\item[(${\rm d}_2$)] The numbers  $\left(f_{\boldsymbol{\mathfrak{tm}}}^{(\ell)}\left(\frac{1}{10}\right)\right)_{\ell\geq 1}$ 
are algebraically independent over $\Q$. 

\item[(${\rm d}_3$)]  The numbers $\left(f_{\boldsymbol{\mathfrak{pf}}}^{(\ell)}\left(\frac{1}{7}\right)\right)_{\ell\geq 1}$ 
are algebraically independent over $\Q$. 

\item[(${\rm d}_4$)]  The numbers 
$\left(f_{\boldsymbol{\mathfrak{bs}}}^{(l)}\left(\frac{1}{3}\right)\right)_{l \geq 0}$ 
are algebraically independent over $\Q$. 
\end{itemize}

\paragraph*{Applying the lifting theorem} 
Let us first prove (${\rm d}_1$). By the lifting theorem, the result would follow if we can prove that the functions $f_{\boldsymbol{\mathfrak{tm}}}(z)$ and 
$f_{\boldsymbol{\mathfrak{pf}}}(z)$ are algebraically independent over $\Q(z)$. But this can be derived from \cite[Theorem 3.5]{Ni_Liv}, using 
that both satisfy an inhomogeneous $2$-Mahler equation of order one (see \eqref{eq:eqtm} and \eqref{eq:paperfolding}). 
Hence, (${\rm d}_1$) is satisfied. Properties (${\rm d}_2$) and (${\rm d}_3$) also follow from the lifting theorem once we know that 
$f_{\boldsymbol{\mathfrak{tm}}}(z)$ and $f_{\boldsymbol{\mathfrak{pf}}}(z)$ are hypertranscendental. 
Again thanks to Equations \eqref{eq:eqtm} and \eqref{eq:paperfolding}, this can be deduced from of a result of Ke. Nishioka \cite[Theorem 3]{Ni84}.  Hence, (${\rm d}_2$) and (${\rm d}_3$) are satisfied. 
In order to prove (${\rm d}_4$), we can use once again the lifting theorem, but now we need to know that the functions 
$\left(f_{\boldsymbol{\mathfrak{bs}}}^{(l)}\left(z\right)\right)_{l \geq 0}$ 
and the functions $\left(f_{\boldsymbol{\mathfrak{bs}}}^{(l)}\left(z^2\right)\right)_{l \geq 0}$ 
are all algebraically independent over $\Q$. Indeed,  
$f_{\boldsymbol{\mathfrak{bs}}}(z)$ satisfies a $2$-Mahler equation of order two and not just an 
inhomogeneous equation of order one as in the previous cases. 
The results we need is recently proved by Dreyfus, Hardouin, and Roques. This is precisely Theorem 4.3 in  \cite{DHR}.  
Hence, (${\rm d}_4$) is satisfied. 
This ends the proof. 
\end{proof}

Let us discuss briefly one difficulty that we deliberately avoid in Proposition \ref{prop:exemplefinal}, 
but that may appear in other similar examples. Let us consider the numbers 
\begin{equation}
\label{eq:dependentpoints}
f_{\boldsymbol{\mathfrak{tm}}}\left(\frac{1}{2}\right), f_{\boldsymbol{\mathfrak{pf}}}\left(\frac{1}{3}\right), 
\text{ and } f_{\boldsymbol{\mathfrak{bs}}}\left(\frac{1}{6}\right)\, .
\end{equation}
They all are the value of a $2$-Mahler function and the numbers $\frac{1}{2}, \frac{1}{3}$, and $\frac{1}{6}$ 
are multiplicatively dependent.  
In such a situation, none of our three main results can be directly used.   
However, we can overcome this deficiency as follows. Let us consider the bivariate functions 
$g_{\boldsymbol{\mathfrak{tm}}}(z_1,z_2)=f_{\boldsymbol{\mathfrak{tm}}}(z_1)$, 
$g_{\boldsymbol{\mathfrak{pf}}}(z_1,z_2)=f_{\boldsymbol{\mathfrak{pf}}}(z_2)$, and 
$g_{\boldsymbol{\mathfrak{bs}}}(z_1,z_2)=f_{\boldsymbol{\mathfrak{bs}}}(z_1z_2)$, so that 
$$
g_{\boldsymbol{\mathfrak{tm}}}\left(\frac{1}{2},\frac{1}{3}\right)=f_{\boldsymbol{\mathfrak{tm}}}\left(\frac{1}{2}\right), \,
g_{\boldsymbol{\mathfrak{pf}}}\left(\frac{1}{2},\frac{1}{3}\right)=f_{\boldsymbol{\mathfrak{pf}}}\left(\frac{1}{3}\right), \,
g_{\boldsymbol{\mathfrak{bs}}}\left(\frac{1}{2},\frac{1}{3}\right)=f_{\boldsymbol{\mathfrak{bs}}}\left(\frac{1}{6}\right)\,.
$$
These are $T$-Mahler functions where 
$$
T=\left(\begin{array}{cc} 2 & 0 \\ 0 & 2 \end{array}\right)\in\mathcal M\, ,
$$
and the point $(1/2,1/3)$ is regular and $T$-independent, so that we can apply the lifting theorem.  
It implies that the algebraic independence over $\Q$ between the three numbers given in \eqref{eq:dependentpoints} 
can be obtained by proving the algebraic independence over $\Q(z_1,z_2)$ of the functions  
$$
f_{\boldsymbol{\mathfrak{tm}}}(z_1), \, f_{\boldsymbol{\mathfrak{pf}}}(z_2),\, f_{\boldsymbol{\mathfrak{bs}}}(z_1z_2),\, 
\text{ and }f_{\boldsymbol{\mathfrak{bs}}}(z_1^2z_2^2)\, .
$$

\begin{rem} 
The trick we used in the previous example can be made more general, as observed by 
Loxton and van der Poorten in \cite[Lemma 3]{LvdP78}.  In fact, we already used it 
in Lemma \ref{lem:HMindependentpoints}. 
Let  $f_1(z),\ldots,f_r(z)$ be regular singular $q$-Mahler functions and let  $\alpha_1,\ldots,\alpha_r$ be distinct 
algebraic numbers. For every $i$, $1\leq i\leq r$, let us assume that $f_i(\alpha_i)$ is well-defined 
and that there 
is a regular singular $q$-Mahler system 
\begin{equation}\tag{\theequation .i}\label{eq:smfin} 
\left(\begin{array}{c} f_i(z)=f_{i,1}(z) \\ f_{i,2}(z) \\ \vdots \\ f_{i,m_i}(z) \end{array}\right)
=A_i(z)\left(\begin{array}{c} f_{i,1}(z^q) \\ f_{i,2}(z^q) \\ \vdots \\ f_{i,m_i}(z^q) \end{array}\right)\, .
\end{equation}
We also assume that $\alpha_i$ is regular with respect to \eqref{eq:smfin}.   
We infer from \cite[Lemma 3]{LvdP78} that there exist roots of unity $\zeta_1,\ldots \zeta_r$, 
multiplicatively independent points $\beta_1,\ldots,\beta_s$, and monomials $M_1,\ldots,M_r$ 
such that
$$
\alpha_i = \zeta_i M_i(\beta_1,\ldots,\beta_s), 1 \leq i \leq r\, .
$$
There also exist positive integers $h$ and $l$ such that $ \zeta_i^{q^{h+l}}= \zeta_i^{q^{h}}$ for every $i$. 
Iterating $h$ times each system \eqref{eq:smfin}, we obtain a linear relation between the value 
of the function $f_i(z)$ at $\alpha_i$ and the values of the functions $f_{i,1}(z),\ldots,f_{i,m_i}(z)$ at $\alpha_i^{q^h}$. 
Now, we consider the transformation $T=q^l {\rm I}_s \in \M$, where ${\rm I}_s$ denote the $s$-dimensional identity matrix. 
We set
$$
g_{i,j}(\z) = f_{i,j}( \zeta_i^{q^h} M_i(\z)),\ 1 \leq i \leq r,\ 1 \leq j \leq m_i\, ,
$$
where $\z=(z_1,\ldots,z_s)$. We have,
$$
\left(\begin{array}{c} g_{i,1}(\z) \\ \vdots \\ g_{i,m_i}(\z) \end{array}\right)
=A_i(\zeta_i^{q^h} M_i(\z))\left(\begin{array}{c}  g_{i,1}(T\z) \\ \vdots \\ g_{i,m_i}(T\z)\end{array}\right)\, 
$$
and
$$
g_{i,j}\left(\beta_1^{q^h},\ldots,\beta_s^{q^h}\right) = f_{i,j}\left(\alpha_i^{q^h}\right) \,.
$$
Thus, we can apply the lifting theorem to this new system.  We deduce that if the functions $g_{i,j}(\z)$, 
$1 \leq i \leq r$, $1 \leq j \leq m_i$, are algebraically independent over $\Q(\z)$, then the numbers 
$f_1(\alpha_1),\ldots,f_r(\alpha_r)$ are algebraically independent over $\Q$. 
\end{rem}


\end{document}